\DeclareMathOperator*{\argmax}{arg\,max}
\newtheorem{theorem}{Theorem}[section]
\newtheorem{lemma}{Lemma}[section]
\newtheorem{proposition}{Proposition}[section]
\renewcommand{\P}{\mathbb{P}}
\newcommand{\E}{\mathbb{E}}
\newcommand{\R}{\mathbb{R}}
\newcommand{\Prob}{\mathbb{P}}
\pgfplotsset{compat=1.13}
\newcommand{\bP}{\mathbb{P}}
\newcommand{\cP}{\mathcal{P}}
\newcommand{\cPpar}{\mathcal{P}(\mu,s,\beta,\varphi)}
\newcommand{\bE}{\mathbb{E}}
\newcommand{\1}{\mathds{1}}
\title{Robust Competitive Ratio for Deterministic\\ Monopoly Pricing}
\author{
Tim S.G. van Eck\footnote{Email: \texttt{t.s.g.vaneck@tilburguniversity.edu}},\
Pieter Kleer
{\rm and}  Johan S.H. van Leeuwaarden
}
\affil{Department of Econometrics and Operations Research, Tilburg University}
\date{\today}
\begin{document}

\maketitle

\begin{abstract}
We study deterministic monopoly pricing under partial knowledge of the market, where the seller has access only to summary statistics of the valuation distribution, such as the mean, dispersion, and maximum value. Using tools from distributionally robust optimization and max-min analysis, we evaluate pricing strategies based on their competitive ratio (CR). We characterize the worst-case market scenario consistent with the available information and provide a complete solution for minimizing the CR. Our analysis also covers optimal pricing under various measures of dispersion, including variance and fractional moments. Interestingly, we find that the worst-case market for CR coincides with that for expected revenue. Using proof techniques tailored to the CR framework, we further examine how dispersion and maximum valuation influence optimal deterministic pricing. These results offer practical guidance for setting robust prices when market information is limited.
\end{abstract}

\medskip
\noindent\textbf{Keywords:}
 Monopoly pricing, maximin analysis, revenue maximization, competitive ratio, distributionally robust optimization.

\bigskip
\noindent\textbf{Acknowledgements.} This research was supported by the Nederlandse Organisatie voor Wetenschappelijk Onderzoek (NWO), Vici grant number 202.068.

\section{Introduction}
Monopoly pricing refers to the strategy employed by a sole provider in a market to maximize profits by setting prices above competitive levels. A monopolist leverages comprehensive market knowledge to understand consumer behavior and demand elasticity, enabling precise predictions on how price changes impact the probability of sale. In our analysis, market knowledge refers to the valuation distribution that maps each price to its corresponding conversion rate,~i.e.,~the percentage of interested consumers who complete a purchase. By setting the price, the monopolist navigates a trade-off between price and conversion rate to maximize revenue. The goal is to identify the price point where marginal revenue equals marginal cost, ensuring maximum profitability; see e.g.~\citep{loertscher2022monopoly,LOERTSCHER2024103081,weber2024monopoly} and references therein.
This market power allows the monopolist to strategically adjust prices to maintain high conversion rates and maximize revenue, unlike firms in competitive markets where prices are dictated by supply and demand dynamics. We focus on the practically relevant situation where the monopolist has only partial market knowledge, in particular when the monopolist's knowledge is restricted to summary statistics, such as the mean and variance of the valuation distribution. The monopolist’s market power is then significantly weakened, and optimal pricing strategies that rely on complete market knowledge are no longer applicable.

\subsection{Problem description}
The fact that only partial instead of full information is available impairs the monopolist's ability to set optimal prices, leading to increased uncertainty and risk in pricing decisions. To address this, the monopolist will first determine the worst-case market, representing the most unfavorable distribution of consumer valuations given the known statistics. Once the worst-case market is identified, the focus will shift to max-min analysis where revenue under the worst-case valuation distribution is maximized. This max-min analysis is an example of Distributionally Robust Optimization {\citep{Scarf1958,rahimian2019distributionally}}, a branch of mathematics which involves making decisions that perform well across a range of possible distributions, ensuring robust performance even when precise distributional details are unknown. This approach allows the monopolist to make informed pricing decisions even with limited information, thereby mitigating the reduction in market power and ensuring optimal financial outcomes despite the uncertainty.

Standard works on Bayesian and robust monopoly pricing often consider expected revenue as the primary metric; see \citep{riley1983optimal, myerson1981optimal,azar2012optimal,carrasco2018optimal}.
Instead, our analysis of monopoly pricing will use the competitive ratio, which provides a relative measure of performance by comparing the monopolist's revenue under partial information to the theoretical maximum revenue achievable with complete information (see \cite{terlizzese2008ief} for an axiomatization). This ratio helps to normalize performance across different market conditions and valuation distributions, offering a clearer assessment of the pricing strategy's effectiveness \citep{chen2022distribution, giannakopoulos2019robust, eren2010monopoly,  wang2024minimax,bahamou2024fast}. The competitive ratio is commonly used in other contexts as well. 
For instance, in online algorithms and worst-case analysis, the competitive ratio is used to evaluate the performance of algorithms when inputs are not fully known in advance, see,~e.g.,~\citep{borodin2005online}. The competitive ratio has been analyzed in many applications, with some examples being supply chain formation \citep{babaioff2005incentive}, online routing optimization \citep{jaillet2008generalized}, (semi-online) supply chain scheduling \citep{averbakh2012semi}, online customer selection in supply chain models \citep{elmachtoub2016supply}, and prophet inequalities \citep{lucier2017economic}.

We focus on fixed pricing as mechanism due to its transparency and simplicity in both marketing and operations, while building trust and long-term customer relationships. Randomized pricing has often been suggested as an alternative in some robust pricing studies \citep{carrasco2018optimal,wang2024minimax}, potentially leading to higher revenues for certain ambiguity sets due to opportunities for price differentiation and experimentation. 
Moreover, several recent studies reinforce the strength of fixed pricing, demonstrating that it can secure a guaranteed percentage of the maximal revenue achievable by optimal mechanisms. For information with one historical price and assuming that the valuation distribution is regular or has a monotone hazard rate, 
\cite{allouah2023optimal} show that deterministic pricing can perform well compared with randomized mechanisms. In a comparable spirit, the power of deterministic pricing as compared to dynamic pricing has been demonstrated for reusable resources \citep{elmachtoub2023power}, inventory management \citep{elmachtoub2023simple}  and  queueing systems \citep{bergquist2023static}.
This makes fixed pricing particularly well-suited for robust pricing with partial market knowledge, as it balances simplicity and tractability with strong performance. Consequently, we adopt fixed pricing as mechanism throughout the paper. 

\subsection{Contributions}
Finding the worst-case market for the competitive ratio is significantly more challenging than for expected revenue. For expected revenue, one can often utilize the primal-dual approach from semi-infinite linear programming to identify the worst-case market \citep{chen2022distribution,roos2019chebyshev}. However, when applied to the competitive ratio, this method becomes more laborious due to the need for a Charnes-Cooper transformation to deal with its relative nature and different structural constraints. To tackle this, we develop a novel proof method that exploits alternative techniques tailored to the unique requirements of the competitive ratio and the information set with summary statistics. Specifically, our approach involves advanced analytical methods and optimization strategies to handle the complexity introduced by the competitive ratio's relative performance measure. 

This paper makes three main contributions to the understanding of monopoly pricing under partial market knowledge. First, we fully resolve the minimization problem for the competitive ratio, identifying the worst-case market when partial knowledge of the valuation distribution includes the mean, dispersion, and maximum value. Special cases of the considered dispersion measures include variance and fractional moments. Second, we provide solutions to the max-min analysis and offer exact characterizations for optimal prices. For variance, these optimal prices are derived in closed form, showing how they vary as functions of the summary statistics. Our results reveal how dispersion affects optimal pricing and offer practical guidance for adjusting prices in response to market uncertainty. The choice of dispersion measure impacts the range of possible worst-case scenarios. Variance generally restricts outliers, while fractional moments between 1 and 2 allow for fatter tails and extreme scenarios. This affects how the adversary can influence market outcomes, and in turn, how the monopolist should set the optimal price.  
Third, we demonstrate that, for the partial information considered, the worst-case market for the competitive ratio is identical to the worst-case market for expected revenue. This result is surprising because it challenges the general belief that the competitive ratio limits the adversary's ability to create worst-case scenarios. Intuitively, this is because negative scenarios also negatively impact the optimal revenue with complete information, thereby incentivizing the adversary to select a scenario that is more moderate.

\subsection{Further literature}
Monopoly pricing with full market information is well understood with the optimal price being easily determined \citep{riley1983optimal, myerson1981optimal}. The seminal work of \cite{myerson1981optimal} shows that there is a unique optimal price under certain regularity assumptions, while \cite{riley1983optimal} show that it is always optimal to choose a deterministic price in the single-item setting.
In practical settings, however, it is often impossible to acquire complete knowledge of the value distribution.
Such monopoly pricing with partial market information has also been an active research topic, spurred by the larger trend of developing pricing and auction mechanisms that do not excessively depend on the specific information structures;
\citep{wilson1985game} and \citep{carroll2019robustness}. \cite{bergemann2008pricing} pioneered this direction, taking absolute regret as performance measure assuming the monopolist only knows the maximum value. \citet{bergemann2011robust} explore a related setting, considering both absolute regret and expected revenue as performance criteria, while assuming the valuation distribution lies within a given neighborhood of a known reference distribution.

Other now classic works studied the max-min problem with expected revenue as the performance metric and focus on mean-maximum valuation and mean-variance knowledge. \cite{azar2012optimal} examined knowledge of the mean and variance, demonstrating that the optimal posted price maximizing expected revenue in the worst case can be expressed as an explicit function that decays with variance. Subsequent works have expanded on this by considering different knowledge sets, such as \cite{carrasco2018optimal} for a finite number of moments, \cite{kos2015selling} for mean and maximum valuation, and \cite{suzdaltsev2020distributionally} for mean, variance, and maximum valuation.
  
For mean-variance information, 
\cite{azar2012optimal} leveraged the optimal deterministic price to determine a relative performance guarantee, by evaluating the ratio of the expected revenue in the optimal deterministic price and the expected revenue with full knowledge. Since \cite{azar2012optimal} treat the numerator and denominator of the ratio as independent, this performance guarantee is a lower bound for the CR. Accommodating this result, \cite{azar2013parametric} provide an upper bound by evaluating the competitive ratio in a specific two-point distribution. 
Recent breakthroughs have closed this gap between the lower and upper bound.  Using different proof methods, \cite{giannakopoulos2019robust} and \cite{chen2022distribution} both solve the max-min problem and obtain the optimal price in closed form. 

We extend the state-of-the-art in \cite{giannakopoulos2019robust} and \cite{chen2022distribution} in two ways. First, instead of variance we consider a general dispersion measure, which can also encompass fractional moments, among other examples. Second, we assume the seller knows that the maximum valuation will not exceed a certain upper bound. Determining the optimal robust pricing strategy involves solving a semi-infinite fractional program, which is generally mathematically challenging. Existing approaches attempt to overcome this by transforming the fractional objective into a linear one and solving the dual problem, as seen in \cite{chen2022distribution} for the mean-variance case. However, this method becomes prohibitively complex, if not intractable, when extending to more general dispersion measures and incorporating maximum valuation knowledge. In contrast, \cite{giannakopoulos2019robust} avoided solving the semi-infinite fractional program for the mean-variance case by using relaxations, bounding techniques, and sharp closed-form bounds for conditional expectations of the valuation distribution. Such bounds are not readily available for ambiguity sets with general dispersion and maximum valuation information as in this paper.
To tackle these mathematical challenges, we develop a new probabilistic approach inspired by \cite{giannakopoulos2019robust}, but adapted to handle implicit forms of key quantities and to accommodate a higher level of generality. 

While addressing the mathematical intricacies introduced by general dispersion and maximum valuation information is valuable in its own right, leading to new fundamental insights into robust pricing, it also significantly alters the optimal robust pricing strategy from a practical perspective. In \cite{giannakopoulos2019robust} and \cite{chen2022distribution}, for mean-variance knowledge with an unbounded maximum valuation, the optimal price solving the max-min problem decays as a function of variance. In contrast, our analysis shows that with a bounded maximum valuation and large variance ranges, optimal prices increase with variance. This stark difference highlights the impact of including maximum valuation. 
\cite{chen2022distribution} also show that the expected revenue objective results in a lower price than the CR objective. We will show that this ordering no longer holds when the knowledge also includes the maximum valuation. 

\subsection{Outline}
The remainder of the paper is structured as follows. Section~\ref{sec:model} introduces the max-min framework for robust monopoly pricing considered in this work. In Section~\ref{sec:fun}, we present two fundamental properties of the worst-case valuation distribution (the min of max-min) with formal proofs provided in Section~\ref{prroooffs}. Section~\ref{sec:rprice} derives the optimal prices (the max of max-min), offering a partially implicit characterization for the general setting and a closed-form solution when dispersion is measured by variance. We reveal several new insights into optimal pricing and performance, underscoring the complex yet intuitive interplay between price, dispersion, and maximum valuation. Finally, Section~\ref{sec:conclusions} concludes the paper and outlines some promising avenues for future research.

\section{Model description}\label{sec:model}
The monopoly pricing literature traditionally assumes the seller knows for each price $p$ the conversion rate $\bP(X \geq p)$, where $X$ is a random variable expressing an arbitrary customer's willingness-to-pay. The seller then sets the price to maximize the revenue $\textup{REV}(p,\bP)=p\bP(X \geq p)$. This leads to the maximal expected revenue $\textup{OPT}(\bP)=\sup_{p>0}\textup{REV}(p,\bP)$. Here shorthand notation $\bP$ is used for the conversion rate function $\bP(X \geq p)$. One could alternatively refer to $\bP$ as the demand function, as multiplying the conversion rate with the number of potential consumers would given the demand for the product. The terminology we prefer and tend to use throughout the paper is 
valuation distribution, as $\bP(X \geq p)$ is the tail cumulative distribution function of the valuation $X$. A consumer purchases the product when the consumer's value equals or exceeds the price. 

Observe that $\textup{REV}(p,\bP)$ and $\textup{OPT}(\bP)$ both depend on $\bP$, which implies that the seller knows $\bP$. Being a distribution function, we assume that $\bP(X\geq 0)=1$ and $\lim_{p\to\infty} \bP(X\geq p)=0$. Guaranteeing uniqueness of the optimal price $\argmax_p \textup{REV}(p,\bP)$ requires additional assumptions on $\bP$, such as a monotone hazard rate or some related notion. However, we do not impose such an assumption, as in our setting, the seller does not know $\bP$, and only has partial knowledge of $\bP$, such as the mean, variance and maximum valuation.
Setting the optimal price then involves decision-making under non-Bayesian uncertainty, where the seller lacks full knowledge of the market. Instead of assuming a probabilistic demand model $\bP$, the seller adopts a worst-case scenario approach, where an adversarial Nature is assumed to select the most unfavorable distribution of outcomes. In this setting, the seller must still optimize over all prices, but the adversarial Nature of the market creates a worst-case performance evaluation, influencing the optimal pricing strategy based on both the chosen performance metric and the amount of market information available to the seller.

Two commonly studied performance metrics in this context are maximin revenue and maximin competitive ratio. The maximin revenue metric focuses on protecting the seller against the lowest possible revenue, safeguarding against markets where demand is weak. Let $\cP$ denote the ambiguity set containing all valuation distributions $\bP$ that comply with the seller's partial knowledge.
The seller's goal is then to maximize the revenue in the worst possible circumstances,~i.e.,
\begin{align}\label{mxxm}
 \sup_{p> 0} \inf_{\bP\in \cP} \textup{REV}(p,\bP)=
 \sup_{p> 0} \textup{REV}(p,\bar\bP)= \textup{OPT}(\bar\bP)
\end{align}
with $\bar\bP$ the worst-case valuation distribution solving $ \inf_{\bP\in \cP} \textup{REV}(p,\bP)$ for a fixed $p$. Finding $\bar\bP$ can be challenging and requires solving a minimization problem over the possibly infinite many distributions $\bP$ contained in the ambiguity set $\cP$. Moreover, $\bar\bP$ can have a complicated structure, as Nature's adversarial choice may depend on the price $p$. 

The maximin competitive ratio aims to strike a balance. Not only offering protection against poor revenue outcomes, but also seeking solid performance in favorable, high-revenue scenarios. These differences in the metrics may lead to distinct robust pricing mechanisms, meaning the optimal price for one criterion could differ substantially from that for another. In more formal terms, the competitive ratio (CR) is defined as
\begin{align}
\textup{CR}(p,\bP)=\frac{\textup{REV}(p,\bP)}{\textup{OPT}(\bP)}
\end{align}
and the optimal price for a seller having partial market knowledge contained in $\cP$ should follow from solving the maximin ratio problem 
\begin{align}\label{CPR}
 \sup_{p> 0} \inf_{\bP\in \cP} \textup{CR}(p,\bP). 
\end{align}
As for the maximin revenue problem \eqref{mxxm} solving \eqref{CPR} is generally challenging with a solution that depends strongly on the ambiguity set $\cP$. Compared to \eqref{mxxm}, the maximin problem \eqref{CPR} is likely even more challenging, as the numerator and the denominator of the ratio $\textup{CR}(p,\bP)$ are both functions of the same valuation function $\bP$, bringing an element of non-linearity into the minimization problem.

This paper's goal is to solve \eqref{CPR} for ambiguity sets that lead to non-degenerate yet tractable pricing rules. We evaluate the performance of a pricing mechanism relative to a valuation distribution $\bP$ by considering the ratio between the revenue generated by the mechanism and the revenue generated by the optimal pricing rule. This ratio lies within the interval $[0,1]$, where a higher ratio indicates that the seller's knowledge of the valuations is more informative and leads to better revenue maximization.
When the valuation distribution $\bP$ is fully known, the optimal pricing mechanism maximizes this ratio, achieving the best possible revenue. However, when the valuation distribution is only partially known—belonging to an ambiguity set $\cP$—the pricing problem becomes more complex. In this case, the seller must consider a combined maximization and minimization problem, where the adversary selects the worst-case market from the ambiguity set $\cP$ in response to the seller's chosen price.
The seller's objective is then to select a single price that maximizes the worst-case ratio of the revenue obtained under distribution-free pricing (based on the partial knowledge of $\bP$) to the revenue that would be achieved if the true distribution $\bP$ were known.
The optimal (maximin) ratio represents a lower bound on the performance of any $\bP$-independent pricing strategy. We refer to the price that solves this optimization problem as the {\it optimal robust price} because it is derived without requiring precise knowledge of the valuation distribution, relying instead on partial information. This robust pricing mechanism provides a safeguard against worst-case scenarios, despite considerable uncertainty about the true distribution of valuations.

Let us now introduce in detail the partial knowledge that the seller has in this paper. Define a class of ambiguity sets by conditioning on the mean, maximum valuation and a general dispersion measure as
\begin{align}  
\cP =\cP(\mu,s,\beta,\varphi) =  \{\Prob :  \bP(X \in [0,\beta])=1,\; \E_{\P}(X) = \mu, \E_{\P}(\varphi(X)) = s\},
    \label{eq:ambiguityk}
\end{align}
where $\mu$ is the mean, $s$ the amount of dispersion, $\beta$ the maximum valuation upper bound, and $\varphi : [0,\beta] \rightarrow \R_{\geq 0}$ a strictly convex differentiable function representing the dispersion measure. The general dispersion measure $\varphi$ includes as a subclass all (fractional) moments by allowing $\varphi(x)=x^q$ with $q>1$. In many applications of distributionally robust optimization, the benchmark ambiguity set contains all distributions with a given mean and variance ($q=2$). This ambiguity set was first considered in 
\cite{Scarf1958}, who solved a maximin version of the newsvendor problem with mean-variance information, and for monopoly pricing this started with \cite{azar2012optimal}. For $q< 2$ and $\beta = \infty$, this allows heavy-tailed distributions with infinite second moments as candidate worst-case scenarios. In this way, we also find a new relationship between the robust price and the tail exponent of the worst-case valuation distribution. A comparable setting with heavy-tailed demand in the newsvendor model is studied in \cite{das2021heavy}.

Taken together, the goal is to solve the maximin ratio problem \eqref{CPR} for the ambiguity set \eqref{eq:ambiguityk}. To do so, we first solve the minimization problem for finding the worst-case valuation distribution. We start with presenting a two fundamental expressions in Section~\ref{sec:fun}, relating the worst-case ratio to tight bounds for the tail probability and conditional expectation of the valuation distribution.

\section{Theoretical results}\label{sec:fun}
In this section, we present the key theoretical advances for solving the minimization components of the maximin ratio problem stated in \eqref{CPR}. The first result (Theorem~\ref{CRDT}) provides a characterization of the worst-case ratio in terms of tight bounds for three fundamental quantities: the worst-case tail bound, the best-case tail bound, and the best-case conditional expectation. Deriving these tight bounds involves solving three independent tractable minimization problems, whose solutions together then solve the original ratio problem in  \eqref{CPR}. 
The new formulation for the worst-case ratio introduced in Section~\ref{mainthr1} is leveraged in Section~\ref{sec:rprice} to solve the maximin ratio problem and determine the optimal pricing scheme. In Section~\ref{mainthr2}, we present a second key result (Theorem~\ref{thm22}), demonstrating that, for the ambiguity set considered in this work, the extremal distribution that delivers the worst-case ratio is identical to the distribution that yields the worst-case tail bound. This finding is both unexpected and significant, as it offers important insights into the development of a robust pricing mechanism. The proofs of the two theorems are presented in Section~\ref{prroooffs}.

\subsection{Worst-case ratio}\label{mainthr1}
We will now present a fundamental result for the inner minimization problem of \eqref{CPR}. 
\begin{theorem}[Competitive ratio decomposition]\label{CRDT}
Consider a fixed $p\in(0,\beta]$ and ambiguity set $\cP = \cP(\mu,s,\beta,\varphi)$.
The tight lower bound for $\textup{CR}(p,\bP)$ satisfies
\begin{align}\label{decomposition}
    \inf_{\bP\in \cP} \textup{CR}(p,\bP) = \min\left\{\frac{\inf_{\bP\in \cP}\bP(X\geq p)}{\sup_{\bP\in \cP}\bP(X\geq p)}, \frac{p}{\sup_{\bP\in \cP}\bE(X|X \geq p)}\right\}.
\end{align}
\end{theorem}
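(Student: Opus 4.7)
\begin{customproofsketch}
My plan is to prove both inequalities in \eqref{decomposition} separately. For brevity, write $P_{-}=\inf_{\bP\in\cP}\bP(X\geq p)$, $P_{+}=\sup_{\bP\in\cP}\bP(X\geq p)$, and $E_{+}=\sup_{\bP\in\cP}\bE(X\mid X\geq p)$.

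For the ``$\geq$'' direction, I would fix any $\bP\in\cP$ and bound $\textup{CR}(p,\bP)=p\bP(X\geq p)/\OPT(\bP)$ from below by upper bounding $\OPT(\bP)=\sup_{q>0}q\bP(X\geq q)$, splitting the supremum at $q=p$. For $q>p$, a Markov-style inequality together with monotonicity yields
\[
q\bP(X\geq q)\leq \E[X\mathbf{1}_{X\geq q}]\leq \E[X\mathbf{1}_{X\geq p}]=\bE(X\mid X\geq p)\,\bP(X\geq p)\leq E_{+}\bP(X\geq p),
\]
contributing the lower bound $p/E_{+}$ to $\textup{CR}$. For $q\leq p$, the goal is to establish the complementary bound
\[
q\bP(X\geq q)\leq (pP_{+}/P_{-})\,\bP(X\geq p),
\]
contributing $P_{-}/P_{+}$. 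Combining the two regimes gives $\OPT(\bP)\leq \max\{(pP_{+}/P_{-})\bP(X\geq p),\,E_{+}\bP(X\geq p)\}$ and hence $\textup{CR}(p,\bP)\geq \min\{P_{-}/P_{+},\,p/E_{+}\}$ for every $\bP\in\cP$.

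For the ``$\leq$'' direction, I would exhibit sequences of distributions in $\cP$ whose competitive ratios approach each of the two candidates. A three-point distribution with mass at $0$, at a point $v$ just below $p$, and at the upper endpoint $\beta$, with weights calibrated to the constraints of $\cP$, achieves $\bP(X\geq p)\to P_{-}$ and $\sup_{q}q\bP(X\geq q)\to pP_{+}$ as $v\to p^{-}$, so that $\textup{CR}\to P_{-}/P_{+}$; heuristically, this is the $P_{+}$-achieving distribution with the mass at $p$ shifted just below $p$. A separate two-point construction at $0$ and at a single large value chosen to realize $\bE(X\mid X\geq p)\to E_{+}$ yields $\textup{CR}\to p/E_{+}$. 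Feasibility of each candidate in $\cP$ reduces to matching the mean, dispersion and support constraints, which pin down the weights.

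The principal difficulty is the Case~1 bound ($q\leq p$) in the ``$\geq$'' direction: the trivial estimate $q\bP(X\geq q)\leq p$ is too loose, and one must couple the local revenue at $q$ to the class-wide extremal quantities $P_{-}, P_{+}$ at $p$ in a nontrivial way. This coupling is where the boundedness of the support by $\beta$ and the convexity of the dispersion function $\varphi$ enter essentially, as they constrain the adversary's ability to place mass below $p$ while keeping the revenue curve $q\mapsto q\bP(X\geq q)$ controlled in the required distribution-dependent manner.
\end{customproofsketch}
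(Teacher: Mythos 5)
Your overall architecture matches the paper's: the lower bound via the case split $q\leq p$ versus $q> p$ on the maximizer of $\OPT(\bP)$, and the upper bound via extremal distributions. Your $q>p$ case is complete and is essentially the paper's argument verbatim. However, there is a genuine gap at precisely the point you flag as the ``principal difficulty,'' and it is not a routine one. The bound you need for $q\leq p$ follows in the paper from a monotonicity statement: the best-case revenue function $g(t)=t\sup_{\bP\in\cP}\bP(X\geq t)$ is non-decreasing on $(0,\tau_2]$ (Lemma~\ref{lemma:g(t)}), which gives $q\bP(X\geq q)\leq g(q)\leq g(p)=pP_{+}$ and hence $\textup{CR}(p,\bP)\geq \bP(X\geq p)/P_{+}\geq P_{-}/P_{+}$. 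Establishing this monotonicity requires first deriving the explicit piecewise form of $\sup_{\bP\in\cP}\bP(X\geq t)$ by semi-infinite LP duality (with transition points $\tau_1,\tau_2$ determined by the dispersion constraint) and then a lengthy convexity computation in $\varphi$ that occupies a full appendix. Without this lemma your Case~1 inequality is unsupported and the lower bound does not go through; your closing remarks correctly identify where convexity of $\varphi$ and boundedness by $\beta$ must enter, but supply no mechanism.

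On the upper bound, your two-construction strategy would in principle suffice (each sequence separately certifies $\inf_{\bP}\textup{CR}\leq P_{-}/P_{+}$ and $\inf_{\bP}\textup{CR}\leq p/E_{+}$), but the constructions as described are not the feasible ones. The paper uses a \emph{single} distribution $\bP^*(p^-)$ with atoms at $0$, $p^-$ and $y(p^-)=\sup_{\bP\in\cP}\bE(X|X\geq p^-)$, carrying masses $1-P_{+}$, $P_{+}-P_{-}$ and $P_{-}$ respectively; it reduces to a two-point distribution on $\{p^-,\alpha(p^-)\}$ for $p\in(0,\tau_1]$ and a three-point distribution on $\{0,p^-,\beta\}$ for $p\in[\tau_1,\tau_2]$, and its competitive ratio evaluates exactly to $\min\{P_{-}/P_{+},\,p/y(p)\}$, attaining all three extremal quantities simultaneously. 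Your proposed ``two-point construction at $0$ and at a single large value realizing $E_{+}$'' is generally infeasible: the only two-point member of $\cP$ with an atom at $0$ is supported on $\{0,\tau_2\}$, whereas $E_{+}$ equals $\alpha(p)$ (for $p\leq\tau_1$) or $\beta$ (for $p\geq\tau_1$), neither of which coincides with $\tau_2$ in general. You also omit the regime $p\in(\tau_2,\beta]$, where $P_{-}=0$ and both sides of the identity vanish; this needs a short separate treatment via the distribution supported on $\{0,\tau_2\}$.
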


Let us first demonstrate 
Theorem~\ref{CRDT}
for the simpler case where the seller only knows the mean and maximum valuation, and lacks dispersion information. Let $\cP(\mu,\beta)$ denote the ambiguity set containing all distributions with support contained in $[0,\beta]$ and mean $\mu$, i.e. \begin{align}  
\cP(\mu,\beta) = \{\Prob :  \E_{\P}(1) = 1,\; \E_{\P}(X) = \mu, \; 0 \leq X \leq \beta\}.
\label{eq:ambimeanrange}
\end{align}
In this case, the tight bounds for the three key quantities in Theorem~\ref{CRDT}  can be readily determined, and combining these bounds yields a tight bound for the competitive ratio. 
Tight lower and upper bounds exist for the tail probability when $p \in (0,\beta]$, as shown in \cite{de1995general}:
\begin{align}
\inf_{\bP\in \cP(\mu,\beta)}\bP(X\geq p) &=\max\Big\{\frac{\mu-p}{\beta-p},0\Big\},\label{WCTP} \quad 
\sup_{\bP\in \cP(\mu,\beta)}\bP(X\geq p) = \min\Big\{\frac{\mu}{p},1\Big\}.
\end{align}
For the required bound on the conditional expectation, consider the two-point distribution $\bP_2 \in \cP(\mu,\beta)$ defined as
\begin{align}\label{P_2}
    \bP_2 = \left\{ \begin{array}{ll}
        p^- & \text{w.p. } (\beta-\mu) / (\beta - p^-), \\
        \beta & \text{w.p. } (\mu-p^-) / (\beta - p^-),
    \end{array}\right.
\end{align}
for which $\bE(X|X\geq p)$ evaluates to $\beta$ and $p^-$ is the left-limit $\lim_{x\uparrow p} x$ of $p$. Since this is a lower bound for $\sup_{\bP\in\cP(\mu,\beta)}\bE(X|X\geq p)$, which in turn is upper bounded by $\beta$, this shows that 
\begin{align}
    \sup_{\bP\in\cP(\mu,\beta)}\bE(X|X\geq p)=\beta.\label{BCCE}
\end{align}

\noindent Combining the three bounds as in Theorem~\ref{CRDT} then gives
    \begin{align}\label{CRKM}
    \inf_{\bP\in \cP(\mu,\beta)} \textup{CR}(p,\bP) =
    \begin{cases}
    \min\left\{\frac{\mu-p}{\beta-p}, \frac{p}{\beta}\right\}, \quad &  p\in(0,\mu], \\
      0, \quad & p\in[\mu,\beta].
    \end{cases}
    \end{align}
This result  has recently been independently derived by \cite{wang2024power}. The closed-form expression in \eqref{CRKM} allows us to determine explicitly the max-min optimal price.

We will now provide another derivation of \eqref{CRKM} from first principles, without using Theorem~\ref{CRDT}. This alternative derivation will shed light on how Theorem~\ref{CRDT} can be proved later in the paper.
First consider $p > \mu$. Nature can select a degenerate distribution that places all probability mass at $\mu$, resulting in $\inf_{\bP\in \cP(\mu,\beta)} \textup{CR}(p,\bP) = 0$.
For $p \leq \mu$, observe that
\begin{align}
\textup{OPT}(\bP_2) &= \sup_{t>0} t \bP_2(X \geq t) = \max\left\{p^-\bP_2(X \geq p^-), \beta\bP_2(X \geq \beta)\right\} = \max\left\{p, \beta\frac{\mu-p}{\beta-p}\right\},
\end{align}
and 
\begin{align}
    \textup{REV}(p,\bP_2) &= p\bP_2(X\geq p) = p\bP_2(X = \beta)=p\frac{\mu-p^-}{\beta-p^-} = p\frac{\mu-p}{\beta-p}.
\end{align}
Hence,
\begin{align}
    \textup{CR}(p,\bP_2) =
    \frac{\textup{REV}(p,\bP_2)}{\textup{OPT}(\bP_2)} = \min\left\{\frac{\mu-p}{\beta-p}, \frac{p}{\beta}\right\}.
\end{align}
Next, we will show for an arbitrary distribution from the ambiguity set 
$\bP_a \in \cP{(\mu, \beta)}$ that $\textup{CR}(p,\bP_a) \geq \min\{\frac{\mu-p}{\beta-p}, \frac{p}{\beta}\}$. Let $p^*$ denote an optimal take-it-or-leave-it price for $\bP_a$, so that $\textup{OPT}(\bP_a) = p^* \bP_a(X\geq p^*)$. 
We now consider a natural distinction that was also used in \cite{giannakopoulos2019robust} and \cite{chen2019distributionally}, distinguishing between $p^*\leq p$ and  $p^*> p$.
 For $p^* \leq p$ we get
\begin{align}
    \textup{CR}(p;\bP_a) &= \frac{p\bP_a(X\geq p)}{p^*\bP_a(X\geq p^*)} \geq \bP_a(X\geq p) \geq \inf_{\bP \in \cP{(\mu, \beta)}}\bP(X\geq p)=\frac{\mu-p}{\beta-p} \geq \min\left\{\frac{\mu-p}{\beta-p}, \frac{p}{\beta}\right\}.
\end{align}
For $p^* > p$ the following tight lower bound can be constructed:
\begin{align}
    \textup{CR}(p;\bP_a) &= \frac{p\bP_a(X \geq p)}{p^* \bP_a(X \geq p^*)} \geq
    \frac{p}{\beta} \geq \min\left\{\frac{\mu-p}{\beta-p}, \frac{p}{\beta}\right\}.\label{CRMR}
\end{align}
The equality between the lower and upper bound proves the assertion.

The above reasoning for $\cP(\mu,\beta)$ shows that the two-point distribution $\bP_2$ is crucial for establishing the tight CR-bound. For the general ambiguity set $\cP$ in \eqref{eq:ambiguityk}, the problem becomes more challenging, as establishing a tight lower bound requires a delicate interplay between two- and three-point distributions, which depends subtly on the chosen dispersion measure. The path leading to Theorem~\ref{CRDT} begins by identifying, for each of the three key quantities, the extreme distributions within the ambiguity set that yield the sharpest possible bounds. By combining these three extreme distributions, we derive a tight lower bound for the competitive ratio. Together with a matching upper bound this will lead to the proof of Theorem~\ref{CRDT} presented in Section~\ref{prroooffs}.

\subsection{Worst-case distribution}\label{mainthr2}
From the detailed analysis of the special case $\cP(\mu,\beta)$, where the seller knows both the mean and the maximum valuation, it becomes evident that the two-point distribution $\bP_2$ not only achieves a tight bound for the competitive ratio but also minimizes expected revenue. This finding suggests that Nature selects the same worst-case valuation distribution, whether the performance measure is $\textup{CR}(p,\bP)$ or $\textup{REV}(p,\bP)$. Our next main result demonstrates that this insight extends to the more general information setting considered in this paper.
\begin{theorem}[Robust worst-case distribution]\label{thm22}
Consider a fixed $p\in(0,\beta]$ and ambiguity set $\cP = \cP(\mu,s,\beta,\varphi)$.
The worst-case objective values
$$\inf_{\bP \in \cP}\textup{CR}(p,\bP) \text{ and } \inf_{\bP \in \cP}\textup{REV}(p,\bP),$$ are asymptotically attained by the same worst-case limiting distribution.\footnote{We say that a limiting distribution asymptotically attains $\inf_{\bP \in \cP}\textup{CR}(p,\bP)$ if the limiting distribution can be represented as a sequence of distributions $\{\bP_n\}_{n=1}^\infty$ with $\bP_n\in\cP$ for all $n\in\mathbb{N}$ and $\lim_{n\xrightarrow{}\infty}\textup{CR}(p,\bP_n)=\inf_{\bP\in\cP}\textup{CR}(p,\bP)$. The same definition applies to $\inf_{\bP\in\cP}\textup{REV}(p,\bP)$ by replacing $\textup{CR}$ with $\textup{REV}$.}
\end{theorem}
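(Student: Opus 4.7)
The plan is to exhibit a single worst-case limiting distribution $\bar\bP$ that simultaneously attains both $\inf_{\bP \in \cP} \textup{REV}(p,\bP)$ and $\inf_{\bP \in \cP} \textup{CR}(p,\bP)$. Since $\textup{REV}(p,\bP) = p\bP(X \geq p)$ depends on $\bP$ only through the tail probability at $p$, the natural candidate is to take $\bar\bP$ to be the extremal (limit of a) distribution achieving $\inf_{\bP \in \cP} \bP(X \geq p)$. By classical moment-problem theory applied to the three equality constraints defining $\cP(\mu,s,\beta,\varphi)$, this extremal distribution is supported on at most three atoms, with a natural split between $[0,p)$ and $[p,\beta]$ dictated by the strictly convex function $\varphi$. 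Asymptotic (rather than exact) attainment is required because, already in the mean-maximum example worked out in Section~\ref{mainthr1}, the extremal two-point distribution $\bP_2$ places mass at the left-limit $p^-$.

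Having identified $\bar\bP$, the remaining task is to verify $\textup{CR}(p,\bar\bP) = \inf_{\bP \in \cP} \textup{CR}(p,\bP)$. Writing $A := \inf_{\bP \in \cP}\bP(X \geq p)$, $B := \sup_{\bP \in \cP}\bP(X \geq p)$ and $C := \sup_{\bP \in \cP}\bE(X|X \geq p)$, Theorem~\ref{CRDT} rearranges to
\begin{equation*}
\inf_{\bP \in \cP}\textup{CR}(p,\bP) \;=\; \frac{pA}{\max\{pB,\,AC\}}.
\end{equation*}
Since $\bar\bP(X \geq p) = A$, it therefore suffices to show that $\textup{OPT}(\bar\bP) = \max\{pB,\,AC\}$. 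I would decompose $\textup{OPT}(\bar\bP) = \max\{\sup_{t \leq p} t\bar\bP(X \geq t),\,\sup_{t > p} t\bar\bP(X \geq t)\}$ and analyse each piece via the atomic structure of $\bar\bP$. For the low-price sup, the extremality of $\bar\bP$ forces the mass below $p$ to sit at a point (or a limit sequence approaching $p$) chosen so that $\sup_{t \leq p} t\bar\bP(X \geq t) = pB$, thereby saturating the best-case tail bound. For the high-price sup, the atoms at or above $p$ concentrate near the value realising $C$, making $\sup_{t > p} t\bar\bP(X \geq t) = AC$ by a direct computation using $\bar\bP(X \geq p) = A$ and $\bE_{\bar\bP}(X|X \geq p) = C$.

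The main obstacle is precisely this alignment step: showing that the single distribution solving the worst-case tail problem also saturates the best-case tail and best-case conditional-expectation bounds appearing on the right-hand side of Theorem~\ref{CRDT}. The argument will lean on strict convexity and differentiability of $\varphi$, which yield unique complementary-slackness structures in the associated moment-problem duals and imply that the minimiser of $\bP(X \geq p)$ is forced to concentrate its upper mass at the conditional-expectation maximiser and its lower mass at the tail-saturating point. I expect the bookkeeping to subdivide into two cases depending on whether $pB \geq AC$ or $pB < AC$, mirroring the two arguments of the $\min$ in Theorem~\ref{CRDT}; in each case one of the two suprema in the decomposition of $\textup{OPT}(\bar\bP)$ is the binding one, while the other is dominated. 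Boundary configurations (atoms at $p^-$ or at $0$) are handled by a limiting sequence within $\cP$, which is exactly the asymptotic attainment notion formalised in the theorem's footnote.
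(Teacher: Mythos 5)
Your proposal is correct and follows essentially the same route as the paper: the candidate distribution you describe (lower mass split between $0$ and $p^-$ so as to saturate the best-case tail bound, upper mass $A$ placed at the conditional-expectation maximiser $y(p^-)$, with boundary atoms handled by a limiting sequence at $p^-$) is exactly the two-/three-point distribution $\bP^*(p^-)$ the paper constructs, and your verification that $\textup{REV}(p,\bP^*)=pA$ and $\textup{OPT}(\bP^*)=\max\{pB,AC\}$ is precisely the computation carried out in the proof of Theorem~\ref{CRDT}, which the paper's proof of Theorem~\ref{thm22} then simply invokes. The only cosmetic difference is direction — you start from the tail-probability minimiser and check it attains the worst-case CR, whereas the paper starts from the CR-extremal distribution and checks it minimises the tail probability — but the underlying alignment argument is identical.
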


The worst-case distribution in Theorem~\ref{thm22} may depend on $p$ and turns out to be either a two-point or a three-point distribution; the proof of Theorem~\ref{thm22} and an explicit description of the worst-case distribution are provided in  Section~\ref{prroooffs}. 
Theorem~\ref{thm22} establishes a key conceptual insight: when fixed prices are used, and the seller knows the mean, dispersion, and potentially an upper bound on the maximum valuation, Nature's adversarial selection is the same for both expected revenue and the competitive ratio. This suggests that differences in optimal pricing arises not from Nature’s choices but from the seller’s focus on optimizing distinct objective functions.

Theorem~\ref{thm22} shows that the difference between expected revenue and competitive ratio does not alter Nature's fundamental adversarial choice. This insensitivity for the criterion, however, applies only to the minimization phase of the maximin problem determining the robust price. The maximization phase, which takes the respective tight bound from the minimization phase as input, may still yield different optimal prices. In Section~\ref{sec:rprice}  we analyze and compare these optimal prices, demonstrating that the prices for the competitive ratio and expected revenue share similar qualitative characteristics. On the other hand, prices derived from the competitive ratio, when viewed as a function of dispersion, tend to be more balanced compared to those optimized for expected revenue.

Theorem \ref{CRDT} and \ref{mainthr2} both continue to hold when introducing a unit cost $c>0$, changing the inner minimization problem in \eqref{CPR} for fixed $p$ into
\begin{align}\label{CR_c}
    \inf_{\bP\in\cP}\frac{(p-c)\bP(X\geq p)}{\sup_{t>c}(t-c)\bP(X\geq t)}.
\end{align}
To see why, notice that \eqref{CR_c} is equivalent to 
\eqref{CPR}, provided that $\cP$ is replaced with
\begin{align}  
\bar{\cP} = \bar{\cP}(\mu,s,\alpha,\beta,\varphi) =  \{\Prob :  \bP(X \in [\alpha,\beta])=1,\; \E_{\P}(X) = \mu, \E_{\P}(\varphi(X)) = s\}
    \label{eq:ambiguityk2}
\end{align}
with $\alpha$ a (possibly negative) lower bound on the valuation. Indeed, with $\bar{p}=p-c$, $\bar{t} = t - c$, $\bar{X} = X - c$, and $\bar{\cP} = \bar{\cP}(\mu-c,s,-c,\beta-c,\varphi)$,
\begin{align}
    \inf_{\bP\in\cP}\frac{(p-c)\bP(X\geq p)}{\sup_{t>c}(t-c)\bP(X\geq t)} &= \inf_{\bP\in\cP}\frac{\bar{p}\bP(X\geq \bar{p}+c)}{\sup_{\bar{t}>0}\bar{t}\bP(X\geq \bar{t}+c)}=\inf_{\bP\in\bar{\cP}}\frac{\bar{p}\bP(\bar{X}\geq \bar{p})}{\sup_{\bar{t}>0}\bar{t}\bP(\bar{X}\geq \bar{t})}.
\end{align}
This shows that incorporating a unit cost results in a shifted ambiguity set. Since the three key quantities \eqref{eq:fundq} underpinning Theorem \ref{CRDT} and \ref{mainthr2} are invariant under such a shift, the key results in this paper (for $c=0$) carry over to the setting with $c>0$. The remainder of this paper assumes 
$c=0$ for reasons of clarity and exposition. 

\section{Proofs of theoretical results}\label{prroooffs}
In Section~\ref{sec23points}, we first introduce several two- and three-point distributions that belong to the ambiguity set and satisfy the constraints on mean, maximum valuation and dispersion. Then, in Section~\ref{sec23pointsfund}, we demonstrate that specific combinations of these two- and three-point distributions serve as extremal distributions, using semi-infinite linear programming techniques, see, e.g., \cite{popescu2005semidefinite} for an overview. We thereby provide tight bounds for the three key quantities. In Section~\ref{sec23pointsfundproof}, we show how all these elements come together to prove Theorem~\ref{CRDT}.

\subsection{Two- and three-point distributions}\label{sec23points}
{Throughout this work, we will rely on two- or three-point distributions with strictly positive probability mass on at most two or three values in $[0,\beta]$.} Let $\cP_2 = \mathcal{P}_2(\mu,s,\beta,\varphi)$ and $\cP_3 = \mathcal{P}_3(\mu,s,\beta,\varphi)$ denote the sets of two-point distributions and three-point distributions within 
$\cP = \cPpar$, respectively. We will start by explaining in detail the structure of two-point distributions.

\paragraph{Two-point distributions.} If we ignore the maximum valuation upper bound for a moment, then it follows from \cite[Proposition 2.3]{kleer2024distribution} that for every $p \in (0,\mu)$ there exists an $\alpha(p) \in (\mu,\infty)$ and that for every $p \in (\mu,\infty)$ there exists an $\alpha(p) \in [0,\mu)$ such that there is a two-point distribution supported on $\{p,\alpha(p)\}$ that is contained in the ambiguity set 
$$
\mathcal{P}(\mu,s,\varphi) = \{\Prob :  \E_{\P}(\1_{[0,\infty)}(X)) = 1,\; \E_{\P}(X) = \mu,\; \E_{\P}(\varphi(X)) = s\},
$$
and the function $\alpha(p)$ is increasing in $p$. Furthermore, using the three constraints defining the ambiguity set $\mathcal{P}(\mu,s,\varphi)$, it follows from \cite[Proposition 2.2]{kleer2024distribution} that $\alpha(p)$ is the unique solution to the equation
\begin{align}\label{alpha}
    \varphi(\alpha(p))\frac{\mu-p}{\alpha(p)-p}+\varphi(p)\frac{\alpha(p)-\mu}{\alpha(p)-p}=s.
\end{align}
This means the set $\cP_2$ can be parameterized by $p$, using the distributions 
\begin{align}\label{P^*_2}
    \bP^*_2(p) = \left\{ \begin{array}{ll}
        p & \text{w.p. } v_p(p), \\
        \alpha(p) & \text{w.p. } v_{\alpha}(p),
    \end{array}\right.
\end{align}
where the probability masses can be shown to be
\begin{align}
\begin{array}{ll}
 v_p(p) & = \displaystyle \frac{\alpha(p)-\mu}{\alpha(p)-p},\\[3.5ex] 
        v_{\alpha}(p) & = \displaystyle  \frac{\mu-p}{\alpha(p)-p}.
\end{array}
\label{eq:two_point_masses} 
\end{align}
Note that, as $p$ increases, probability mass is shifted from $\alpha(p)$ to $p$.
Given the maximum valuation constraint $0 \leq X \leq \beta$ in $\cPpar$, we have a special interest in the two ``extreme''  scenarios for our ambiguity set in \eqref{eq:ambiguityk}: The two-point distribution with $p = 0$, and the two-point distribution with $\alpha(p) = \beta$.
For $p = 0$, we write $\tau_2 = \alpha(0)$, where $\tau_2$ is now the solution to 
\begin{align}
        \frac{s- \varphi(0)}{\mu} = \frac{\varphi(\tau_2)-\varphi(0)}{\tau_2},
        \label{eq:upsilon_2}
\end{align}
and, hence, the right support point of the two-point distribution with support $\{0,\tau_2\}$. This equation is obtained by rewriting \eqref{alpha}. We remark here that this distribution is only contained in $\cPpar$ if $\mu \leq \tau_2 \leq \beta$; we will come back to this later in Lemma \ref{lemma:non-empty}. The $p$ that satisfies $\alpha(p) = \beta$, we denote by $\tau_1$, which is then the left support point of the distribution supported on $\{\tau_1,\beta\}$. The point $\tau_1$ is the solution to
\begin{align}
    \varphi(\tau_1)\cdot \frac{\beta-\mu}{\beta-\tau_1} + \varphi(\beta)\cdot \frac{\mu - \tau_1}{\beta - \tau_1} = s.
    \label{eq:upsilon_1}
\end{align}
If we would model the variance of a distribution using the dispersion constraint $\mathbb{E}[X^2] = \mu^2 + \sigma^2$, we would have  $\tau_1 = \mu - \sigma^2/(\beta - \mu)$ and $\tau_2 = \mu + \sigma^2/\mu$. 

As we mentioned earlier, the two-point distribution supported on $\{0,\tau_2\}$ is only contained in $\mathcal{P}_2$ if $\mu \leq \tau_2 \leq \beta$. This property, as shown in Lemma~\ref{lemma:non-empty}, characterizes the non-emptiness of $\cP$. The proof is provided in Appendix~\ref{ap:proof_lne}.

\begin{lemma}\label{lemma:non-empty}
    $\cPpar \neq \emptyset$ if and only if $\mu \leq \tau_2 \leq \beta$ with $\tau_2$ the solution to \eqref{eq:upsilon_2}.
\end{lemma}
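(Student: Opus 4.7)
\begin{customproofsketch}
The plan is to prove both directions by leveraging the strict convexity of $\varphi$ to produce a monotonic ``secant slope'' function that converts the quantitative inequalities $\mu \leq \tau_2 \leq \beta$ into simpler pointwise inequalities on $s$. Define
\begin{equation*}
g(x) = \frac{\varphi(x)-\varphi(0)}{x}, \qquad x > 0.
\end{equation*}
Strict convexity of $\varphi$ guarantees that $g$ is strictly increasing on $(0,\infty)$ (the secant slope from a fixed point of a convex function is monotone). By \eqref{eq:upsilon_2}, $\tau_2$ is characterized by $g(\tau_2)=(s-\varphi(0))/\mu$, so the two-sided inequality $\mu \leq \tau_2 \leq \beta$ is equivalent to
\begin{equation*}
\frac{\varphi(\mu)-\varphi(0)}{\mu} \;\leq\; \frac{s-\varphi(0)}{\mu} \;\leq\; \frac{\varphi(\beta)-\varphi(0)}{\beta},
\end{equation*}
i.e.\ to $\varphi(\mu) \leq s \leq \varphi(0) + \mu(\varphi(\beta)-\varphi(0))/\beta$.

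For the ``if'' direction, assuming $\mu \leq \tau_2 \leq \beta$, I would exhibit an explicit member of $\cPpar$, namely the two-point distribution placing mass $1-\mu/\tau_2$ at $0$ and mass $\mu/\tau_2$ at $\tau_2$. The support condition uses $\tau_2 \leq \beta$, the probability condition $\mu/\tau_2 \leq 1$ uses $\tau_2 \geq \mu$, the mean equals $\mu$ by direct computation, and the dispersion constraint follows by rewriting the defining equation \eqref{eq:upsilon_2}. This is the distribution $\bP_2^*(0)$ from Section~\ref{sec23points} specialized to $p=0$, so nothing new has to be built.

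For the ``only if'' direction, pick any $\bP \in \cPpar$. Jensen's inequality on the strictly convex $\varphi$ gives $s=\bE_\bP(\varphi(X)) \geq \varphi(\bE_\bP(X)) = \varphi(\mu)$, which translates via the monotonicity of $g$ into $\tau_2 \geq \mu$. The upper bound $\tau_2 \leq \beta$ is the main step and follows from the dual observation: because $X \in [0,\beta]$ almost surely and $\varphi$ is convex, $\varphi(x)$ is pointwise bounded above on $[0,\beta]$ by the chord connecting $(0,\varphi(0))$ and $(\beta,\varphi(\beta))$, i.e.\ $\varphi(x) \leq \varphi(0) + x(\varphi(\beta)-\varphi(0))/\beta$. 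Taking expectations yields $s \leq \varphi(0)+\mu(\varphi(\beta)-\varphi(0))/\beta$, and by the equivalence established above, this is exactly $\tau_2 \leq \beta$.

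The only point that requires attention is justifying strict monotonicity of $g$, so that $\tau_2$ is well-defined (uniqueness of the root in \eqref{eq:upsilon_2}) and so that the chain of equivalences between $\mu\leq \tau_2\leq \beta$ and the bounds on $s$ goes through in both directions. This is standard from strict convexity, so I do not foresee any real obstacle; the proof is essentially a two-line application of Jensen's inequality combined with the reverse (secant) bound.
\end{customproofsketch}
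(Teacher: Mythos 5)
Your proof is correct, but the ``only if'' direction takes a genuinely different and more elementary route than the paper. The paper's argument invokes semi-infinite LP machinery: non-emptiness of $\cPpar$ guarantees (via a Rogosinski/Shapiro-type result) a three-point distribution in the set, which is then reduced to a two-point distribution by a mass-shifting procedure, and the conclusion $\mu \leq \tau_2 \leq \beta$ follows because the distribution on $\{0,\tau_2\}$ has the smallest right support point among all two-point members of $\cPpar$. You instead observe that any $\bP \in \cPpar$ must satisfy $\varphi(\mu) \leq s \leq \varphi(0) + \mu(\varphi(\beta)-\varphi(0))/\beta$ --- the lower bound by Jensen, the upper bound by the chord estimate $\varphi(x) \leq \varphi(0) + x(\varphi(\beta)-\varphi(0))/\beta$ on $[0,\beta]$ --- and translate these into $\mu \leq \tau_2 \leq \beta$ via the strict monotonicity of the secant slope $x \mapsto (\varphi(x)-\varphi(0))/x$. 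This is shorter, self-contained, and has the added benefit of settling existence and uniqueness of $\tau_2$ in $[\mu,\beta]$ by the intermediate value theorem, which the lemma statement otherwise leaves implicit. What the paper's longer route buys is a constructive illustration of the reduction to two- and three-point distributions, a structural theme it reuses throughout Section~\ref{prroooffs}; your argument forgoes that but proves the lemma just as rigorously. The ``if'' direction is identical in both proofs (exhibit $\bP_2^*(0)$ supported on $\{0,\tau_2\}$).
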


\noindent From this point on, we consider only the case $\beta > \tau_2$ to avoid trivial instances.

\paragraph{Three-point distributions.} 
For $p \in [\tau_1, \tau_2]$ we are interested in the three-point distribution supported on $\{0,p,\beta\}$. We next claim that such a distribution indeed exists in $\cP$. We let $w_0(p), w_p(p), w_{\beta}(p)$ be the probability mass on the points $0,p$ and $\beta$, respectively. Then the distribution on $\{0,p,\beta\}$ with these probabilities is contained in $\cP$ if the system
\begin{align}
    \left\{
\begin{array}{rrrrrrr}
     w_0(p) & + & w_p(p) & + & w_\beta(p) &= & 1 \\
      &  & p\cdot w_p(p) & + & \beta \cdot w_\beta(p) &= & \mu \\
     \varphi(0) w_0(p) & + & \varphi(p)w_p(p) & + & \varphi(\beta)w_\beta(p) & = & s \\
     w_0(p), &  & w_p(p), &   & w_\beta(p) & \geq & 0 \\
\end{array}\right.
\label{eq:equations_threepoint}
\end{align}
has a feasible solution. Solving the system formed by the three equations gives
\begin{align}
\begin{array}{ll}
 w_0(p) & = \displaystyle \frac{s(\beta-p) + (\mu - \beta)\varphi(p) + (p-\mu)\varphi(\beta)}{\beta(\varphi(0) - \varphi(p)) + p(\varphi(\beta) - \varphi(0))},\\[3.5ex] 
        w_p(p) & = \displaystyle  \frac{\beta(\varphi(0)-s) + \mu(\varphi(\beta) - \varphi(0))}{\beta(\varphi(0) - \varphi(p)) + p(\varphi(\beta) - \varphi(0))},  \\[3.5ex] 
        w_\beta(p) & = \displaystyle  \frac{\mu(\varphi(0) - \varphi(p)) - p(\varphi(0)-s)}{\beta(\varphi(0) - \varphi(p)) + p(\varphi(\beta) - \varphi(0))}.
\end{array}
\label{eq:three_point_masses} 
\end{align}
It remains to show that $w_0(p),w_p(p),w_{\beta}(p) \geq 0$. We establish this in Lemma \ref{lem:nn3pd}, of which the proof is provided in Appendix \ref{ap:proof_nn3pd}.

\begin{lemma}\label{lem:nn3pd}
The quantities $w_0(p),\; w_p(p)$ and $w_\beta(p)$  in \eqref{eq:three_point_masses} are non-negative.    
\end{lemma}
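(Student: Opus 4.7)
The plan is to verify non-negativity of each of the three numerators separately, and of the common denominator, by exploiting the strict convexity of $\varphi$ together with the defining identities \eqref{eq:upsilon_2} for $\tau_2$ and \eqref{eq:upsilon_1} for $\tau_1$. First I would handle the common denominator $D(p) = \beta(\varphi(0) - \varphi(p)) + p(\varphi(\beta) - \varphi(0))$. Dividing by $\beta p$, one has $D(p)/(\beta p) = [\varphi(\beta) - \varphi(0)]/\beta - [\varphi(p) - \varphi(0)]/p$, and strict convexity (monotonicity of secant slopes through $(0,\varphi(0))$) gives $D(p) > 0$ for $p \in (0,\beta)$, hence on all of $[\tau_1,\tau_2]$.

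Next I would handle $w_p(p)$: its numerator is independent of $p$, and non-negativity is equivalent to the secant-slope inequality $[\varphi(\beta) - \varphi(0)]/\beta \geq (s - \varphi(0))/\mu$. By definition \eqref{eq:upsilon_2}, the right-hand side equals $[\varphi(\tau_2) - \varphi(0)]/\tau_2$, and since $\tau_2 \leq \beta$ by Lemma~\ref{lemma:non-empty}, monotonicity of secant slopes from $(0,\varphi(0))$ yields the desired inequality.

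Then I would treat $w_\beta(p)$ and $w_0(p)$ by showing their numerators are strictly concave functions of $p$ that vanish at two endpoints bracketing $[\tau_1,\tau_2]$. Let $f(p) = \mu(\varphi(0) - \varphi(p)) - p(\varphi(0) - s)$ denote the numerator of $w_\beta(p)$. Then $f''(p) = -\mu\,\varphi''(p) < 0$, so $f$ is strictly concave; a direct check gives $f(0) = 0$ and $f(\tau_2) = 0$ (the latter being \eqref{eq:upsilon_2} rearranged), whence $f(p) \geq 0$ on $[0,\tau_2] \supseteq [\tau_1,\tau_2]$. Analogously, let $g(p) = s(\beta - p) + (\mu - \beta)\varphi(p) + (p - \mu)\varphi(\beta)$ be the numerator of $w_0(p)$. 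Then $g''(p) = (\mu - \beta)\varphi''(p) < 0$, and a direct computation yields $g(\beta) = 0$ and $g(\tau_1) = 0$ (the latter being \eqref{eq:upsilon_1} rearranged), so $g(p) \geq 0$ on $[\tau_1,\beta] \supseteq [\tau_1,\tau_2]$.

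The main subtlety is ensuring $\tau_1 \geq 0$, so that $[0,\tau_2]$ really contains $[\tau_1,\tau_2]$ in the $w_\beta$ step; this is built into the modeling assumption that the two-point distribution on $\{\tau_1,\beta\}$ belongs to $\cP$, which forces $\tau_1 \in [0,\mu]$. Beyond this, the proof reduces to a clean concavity-plus-boundary argument, and I do not anticipate a genuinely hard step --- all the work is done by strict convexity of $\varphi$ and the two scalar equations \eqref{eq:upsilon_1} and \eqref{eq:upsilon_2} defining $\tau_1$ and $\tau_2$.
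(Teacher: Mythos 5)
Your proof is correct, and for $w_0$ and $w_\beta$ it takes a genuinely different (and arguably cleaner) route than the paper. The paper also starts from positivity of the common denominator and proves $w_p(p)\geq 0$ exactly as you do, via the secant-slope comparison $\frac{s-\varphi(0)}{\mu}=\frac{\varphi(\tau_2)-\varphi(0)}{\tau_2}\leq\frac{\varphi(\beta)-\varphi(0)}{\beta}$. But for the other two masses it argues by monotonicity: for $w_0$ it shows that $f(p)=\varphi(p)\frac{\beta-\mu}{\beta-p}+\varphi(\beta)\frac{\mu-p}{\beta-p}$ satisfies $f(\tau_1)=s$ and $f'(p)\leq 0$ (the sign of $f'$ reducing to $\varphi'(p)\leq\frac{\varphi(\beta)-\varphi(p)}{\beta-p}$), and for $w_\beta$ it uses the secant-slope inequality $\frac{\varphi(p)-\varphi(0)}{p}\leq\frac{\varphi(\tau_2)-\varphi(0)}{\tau_2}$ directly. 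Your ``concave function vanishing at two endpoints that bracket $[\tau_1,\tau_2]$'' argument replaces the derivative computation with a purely structural observation, treats $w_0$ and $w_\beta$ symmetrically, and makes the role of \eqref{eq:upsilon_1} and \eqref{eq:upsilon_2} as boundary identities transparent; the paper's version has the minor advantage of never invoking second derivatives. On that point: the paper only assumes $\varphi$ is strictly convex and (once) differentiable, so you should not write $f''(p)=-\mu\varphi''(p)$ or $g''(p)=(\mu-\beta)\varphi''(p)$. This is immediately repaired: your $f$ and $g$ are each a nonpositive multiple of the convex function $\varphi$ plus an affine function of $p$, hence concave, with no smoothness needed. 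Your flagged subtlety $\tau_1\geq 0$ is indeed required (it is also implicitly used in the paper's step $\frac{\varphi(p)-\varphi(0)}{p}\leq\frac{\varphi(\tau_2)-\varphi(0)}{\tau_2}$, which needs $p>0$) and is guaranteed since $\tau_1$ is the left support point of a distribution in $\cP$, supported on $[0,\beta]$.
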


\noindent Hence, the distribution
\begin{align}\label{P^*_3}
    \bP^*_3(p) = \left\{ \begin{array}{ll}
        0 & \text{w.p. } w_0(p), \\
        p & \text{w.p. } w_p(p), \\
        \beta & \text{w.p. } w_\beta(p),
    \end{array}\right.
\end{align}
with the weights $w_0(p),w_p(p),w_\beta(p)$ as in \eqref{eq:three_point_masses} is a well-defined probability distribution in $\cPpar$.

\color{black}
\subsection{Key probabilistic quantities}\label{sec23pointsfund}
In this section, we study the quantities
\begin{align}\label{eq:fundq} 
    \sup_{\bP\in\cP}\bP(X\geq p),\; \inf_{\bP\in\cP}\bP(X\geq p) \; \text{ and } \; \sup_{\bP\in \cP}\bE(X|X \geq p),
\end{align}
and show how $\bP^*_2(p)$ and $\bP^*_3(p)$ relate to them. We occasionally omit the function brackets for convenience and write $\bP^*_2$ and $\bP^*_3$ instead.

\begin{proposition}\label{prop:sup_tp}
    Consider ambiguity set $\cP = \cP(\mu,s,\beta,\varphi)$. Then
    \begin{equation}
    \sup_{\bP \in \cP}\bP(X \geq p)=
    \begin{cases}
     1, \quad &  p\in(0,\tau_1], \\
     w_p(p) + w_{\beta}(p), \quad & p\in[\tau_1,\tau_2], \\
     v_p(p), & p \in [\tau_2,\beta],
    \end{cases}
\end{equation}\label{eq:tailsup}
which is continuous in $p \in (0,\beta]$.
\end{proposition}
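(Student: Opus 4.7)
The plan is to use the standard primal-dual approach for semi-infinite moment problems: in each of the three regimes I would match the claimed value from below, by exhibiting a distribution in $\cP$ that attains it, and from above, by producing a majorant of $\1_{\{x \geq p\}}$ on $[0,\beta]$ of the form $q(x) = a_0 + a_1 x + a_2 \varphi(x)$ whose moment integral $a_0 + a_1 \mu + a_2 s$ equals the same quantity. The primal candidates are the distributions already constructed in Section~\ref{sec23points}: $\bP_2^*(p)$ in the outer regimes $(0, \tau_1]$ and $[\tau_2, \beta]$, and $\bP_3^*(p)$ in the middle regime $[\tau_1, \tau_2]$.

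For $p \in (0, \tau_1]$ the lower bound is immediate: since $\alpha$ is increasing on $(0, \mu)$ with $\alpha(\tau_1) = \beta$, both support points of $\bP_2^*(p)$ lie in $[p, \beta]$, so $\bP_2^*(p)(X \geq p) = 1$, which trivially upper-bounds the sup. For $p \in [\tau_2, \beta]$ the distribution $\bP_2^*(p)$ is supported on $\{\alpha(p), p\} \subseteq [0, \beta]$ with $\alpha(p) \in [0, \mu)$ and yields $\bP_2^*(p)(X \geq p) = v_p(p)$. For the matching upper bound I would take $q$ to be the unique member of $\mathrm{span}\{1, x, \varphi(x)\}$ satisfying $q(\alpha(p)) = 0$, $q'(\alpha(p)) = 0$, and $q(p) = 1$. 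Strict convexity of $\varphi$ forces $a_2 > 0$, so $q$ is strictly convex with global minimum $0$ at $\alpha(p)$; hence $q \geq 0$ on $[0, p)$ and $q \geq 1$ on $[p, \beta]$. A computation using the defining identity \eqref{alpha} for $\alpha(p)$ then collapses $a_0 + a_1 \mu + a_2 s$ to exactly $v_p(p)$.

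For $p \in [\tau_1, \tau_2]$ I would let $q$ be the unique function in $\mathrm{span}\{1, x, \varphi(x)\}$ with $q(0) = 0$ and $q(p) = q(\beta) = 1$. Because the secant slope of $\varphi$ on $[0, p]$ is strictly smaller than that on $[p, \beta]$ (strict convexity), the coefficient $a_2$ is strictly negative, so $q$ is strictly concave. Concavity combined with the three interpolation conditions yields $q(x) \geq x/p \geq 0$ on $[0, p]$ (lying above its chord from $(0,0)$ to $(p,1)$) and $q(x) \geq 1$ on $[p, \beta]$ (lying above its chord from $(p,1)$ to $(\beta,1)$), so $q$ is a valid majorant. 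Taking expectations against $\bP_3^*(p)$ reproduces the lower bound $w_p(p) + w_\beta(p)$, closing the gap. Continuity at the two join points comes for free: at $p = \tau_1$ the distribution $\bP_3^*(\tau_1)$ degenerates to $\bP_2^*(\tau_1)$ on $\{\tau_1, \beta\}$ (i.e.\ $w_0(\tau_1) = 0$), giving $w_p(\tau_1) + w_\beta(\tau_1) = 1$, while at $p = \tau_2$ it degenerates to $\bP_2^*(\tau_2)$ on $\{0, \tau_2\}$ (i.e.\ $w_\beta(\tau_2) = 0$), giving $w_p(\tau_2) + w_\beta(\tau_2) = \mu/\tau_2 = v_p(\tau_2)$.

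The hardest step I anticipate is establishing the sign of $a_2$ and cleanly translating strict convexity of $\varphi$ into the required convexity or concavity of $q$ on $[0, \beta]$; this is where the abstract dispersion function has to pull its weight, and the same convexity inequalities also drive the algebraic identities that reduce $a_0 + a_1 \mu + a_2 s$ to the compact closed forms $v_p(p)$ and $w_p(p) + w_\beta(p)$ via \eqref{alpha} and \eqref{eq:equations_threepoint}.
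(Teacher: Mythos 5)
Your proposal is correct and follows essentially the same route as the paper: the same primal witnesses $\bP_2^*(p)$ and $\bP_3^*(p)$ in the respective regimes, the same dual majorant $a_0+a_1x+a_2\varphi(x)$ interpolating the indicator at the support points (with the derivative condition at $\alpha(p)$ in the regime $[\tau_2,\beta]$), the same sign argument for the coefficient of $\varphi$ via strict convexity to verify dual feasibility, weak duality to close the gap, and the same degeneration of $\bP_3^*$ at $\tau_1$ and $\tau_2$ for continuity. No substantive differences to report.
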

\begin{proof}
The problem of maximizing the supremum can be formulated as a semi-infinite  linear program:
\begin{align}\label{eq:sup_lp}
\begin{array}{ll}
    \sup_\P \; \; & \bE_{\P}(\1{\{x \geq p\}}) \\
    \text{s.t.} \; \; & \E_{\P}(\1_{[0,\beta]}(X)) = 1,\; \E_{\P}(X) = \mu,\; \E_{\P}(\varphi(X)) = s.
\end{array}
\end{align}
The dual of the supremum problem is given by
    \begin{equation}\label{dual1}
\begin{aligned}
&\inf_{\lambda_0,\lambda_1,\lambda_2 \in \R} &  &\lambda_0 + \lambda_1 \mu+\lambda_2 s\\
&\text{s.t.} &      & F(x) = \lambda_0  +\lambda_1 x+\lambda_2 \varphi(x)  \geq \1{\{x \geq p\}}, \ \forall x\in[0,\beta].
\end{aligned}
\end{equation}
If we can find feasible solutions for \eqref{eq:sup_lp} and \eqref{dual1} whose objective function values are equal to each other, then we may conclude from weak duality that both solutions are optimal for their respective problems, see, e.g., \cite{popescu2005semidefinite}.

    For $p \in (0,\tau_1]$ we have $\bP^*_2(X\geq p)=1$. Since $\bP^*_2(p)$ is feasible, it must be a solution. Next, consider $p \in [\tau_1,\tau_2]$. 
If we for now assume that $F(x) = \1{\{x \geq p\}}$ for the points $\{0,p,\beta\}$, this means that $\lambda_0, \lambda_1, \lambda_2$ satisfy the system
\begin{align}
    \left\{
\begin{array}{rrrrrrr}
     \lambda_0 & &  & + & \varphi(0) \lambda_2  &= & 0 \\
      \lambda_0 & + &\lambda_1 p  & + & \varphi(p) \lambda_2  &= & 1 \\
  \lambda_0 & + &\lambda_1 \beta  & + & \varphi(\beta) \lambda_2  &= & 1 \\
\end{array}\right..
\label{eq:equations_dual}
\end{align}
Solving this system gives
\begin{align}
\begin{array}{ll}
 \lambda_0 & = \displaystyle \frac{\varphi(0)(\beta-p)}{\beta(\varphi(0) - \varphi(p)) + p(\varphi(\beta) - \varphi(0))},\\[3.5ex] 
        \lambda_1 & = \displaystyle  \frac{\varphi(\beta) - \varphi(p)}{\beta(\varphi(0) - \varphi(p)) + p(\varphi(\beta) - \varphi(0))},  \\[3.5ex] 
        \lambda_2 & = \displaystyle  \frac{(p-\beta)}{\beta(\varphi(0) - \varphi(p)) + p(\varphi(\beta) - \varphi(0))}.
\end{array}
\label{eq:dual_variables} 
\end{align}
Note that these values are well-defined because the common denominator is strictly positive, since $\varphi$ is strictly convex. Furthermore, observe that $\lambda_2 < 0$, which means that $F(x)$ is concave. Then it follows that for all $x \in [0,\beta]$, we have $F(x) \geq \1{\{x \geq p\}}$. Furthermore, it can be checked that $\lambda_0 + \lambda_1 \mu + \lambda_2 s = w_p(p) + w_\beta(p)$. Then weak duality implies the result for $p \in [\tau_1,\tau_2]$ because of the (primal) feasibility of $\bP^*_3(p)$. Finally, consider $p \in [\tau_2,\beta]$. If we assume (and verify later) that $F(x) = \1{\{x \geq p\}}$ for the points $\{\alpha(p),p\}$ with $\alpha(p)$ as defined in \eqref{alpha}, and $F'(\alpha(p)) = 0$, we need $\lambda_0,\lambda_1,\lambda_2$ to satisfy 
\begin{align}
    \left\{
\begin{array}{rrrrrrr}
     \lambda_0 & & \lambda_1\alpha(p)  & + & \varphi(\alpha(p)) \lambda_2  &= & 0 \\
      \lambda_0 & + &\lambda_1 p  & + & \varphi(p) \lambda_2  &= & 1 \\
   & &\lambda_1  & + & \varphi'(\alpha(p)) \lambda_2  &= & 0 \\
\end{array}\right..
\label{eq:equations_dual2}
\end{align}
Solving this system gives
\begin{align}
\begin{array}{ll}
 \lambda_0 & = \displaystyle \frac{\alpha(p)\varphi'(\alpha(p))-\varphi(\alpha(p))}{\varphi(p)-(p-\alpha(p))\varphi'(\alpha(p))-\varphi(\alpha(p))},\\[3.5ex] 
        \lambda_1 & = \displaystyle  \frac{-\varphi'(\alpha(p))}{\varphi(p)-(p-\alpha(p))\varphi'(\alpha(p))-\varphi(\alpha(p))},  \\[3.5ex] 
        \lambda_2 & = \displaystyle  \frac{1}{\varphi(p)-(p-\alpha(p))\varphi'(\alpha(p))-\varphi(\alpha(p))}.
\end{array}
\label{eq:dual_variables} 
\end{align}
Since $\varphi(x)$ is strictly convex, it follows that $\lambda_2 > 0$. Hence, we know that $F(x)$ is a convex function, which means that for all $x \in [0,\beta]$ we have $F(x) \geq \1\{x\geq p\}$.
If we now check the dual objective, we obtain $$\lambda_0 + \lambda_1 \mu + \lambda_2 s = \frac{\alpha(p)\varphi'(\alpha(p))-\varphi(\alpha(p))-\varphi'(\alpha(p))\mu+s}{\varphi(p)-(p-\alpha(p))\varphi'(\alpha(p))-\varphi(\alpha(p))},$$ which does not reduce to $v_p(p)$. However, due to \eqref{alpha} we can substitute $s = \varphi(\alpha(p)) \frac{\mu-p}{\alpha(p)-p}+\varphi(p)\frac{\alpha(p)-\mu}{\alpha(p)-p}$, resulting in $$\lambda_0 + \lambda_1 \mu + \lambda_2 s = v_p(p).$$ Then weak duality implies the result for $p \in [\tau_2,\beta]$ because of the (primal) feasibility of $\bP^*_2(p)$. Next, notice that $w_{p}(\tau_1) + w_{\beta}(\tau_1)=1-w_0(\tau_1) = 1$ due to \eqref{eq:upsilon_1} and $w_p(\tau_2)+w_{\beta}(\tau_2)=1-w_0(\tau_2)=v_p(\tau_2)$ due to \eqref{eq:upsilon_2}. Hence, \eqref{eq:tailsup} is continuous in $p$. This completes the proof.
\end{proof}

Using a similar approach, we present analogous results for the other two quantities in \eqref{eq:fundq}, beginning with the worst-case tail-probability.
\begin{proposition}\label{prop:inf_tp}
    Consider ambiguity set $\cP = \cP(\mu,s,\beta,\varphi)$. Then
    \begin{equation}
    \inf_{\bP \in \cP}\bP(X \geq p)=
    \begin{cases}
     v_\alpha(p), \quad & p\in(0,\tau_1], \\
      w_{\beta}(p), \quad & p\in[\tau_1,\tau_2], \\
    0, & p \in[\tau_2,\beta],
    \end{cases}
\end{equation}\label{eq:tailinf}
which is continuous in $p \in (0,\beta]$.
\end{proposition}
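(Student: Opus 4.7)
The plan is to mirror the semi-infinite LP duality argument of Proposition~\ref{prop:sup_tp}, but with the direction of optimization reversed. The primal problem
\begin{equation*}
\inf_\bP \; \E_\bP(\1\{X \geq p\}) \quad \text{s.t.}\quad \E_\bP(\1_{[0,\beta]}(X)) = 1,\; \E_\bP(X) = \mu,\; \E_\bP(\varphi(X)) = s
\end{equation*}
has dual
\begin{equation*}
\sup_{\lambda_0,\lambda_1,\lambda_2 \in \R} \lambda_0 + \lambda_1 \mu + \lambda_2 s \quad \text{s.t.}\quad F(x) := \lambda_0 + \lambda_1 x + \lambda_2 \varphi(x) \leq \1\{x \geq p\}, \; \forall x \in [0,\beta].
\end{equation*}
For each of the three regimes, I would exhibit a (possibly limiting) primal-feasible distribution and a dual-feasible $F$ with equal objective value, and conclude by weak duality.

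For $p \in (0,\tau_1]$, the primal certificate is the sequence $\bP^*_2(p_n)$ with $p_n \uparrow p$; since $\alpha(p) > \mu > p$ strictly, we have $\alpha(p_n) > p$ for $p_n$ sufficiently close to $p$, and the tail mass equals $v_\alpha(p_n) \to v_\alpha(p)$. The dual certificate is the unique $F$ determined by $F(p) = 0$, $F(\alpha(p)) = 1$, and $F'(\alpha(p)) = 0$. Strict convexity of $\varphi$ forces $\lambda_2 < 0$, so $F$ is strictly concave with maximum $1$ at $\alpha(p)$; this gives $F \leq 1$ throughout $[0,\beta]$, while monotonicity of $F$ on $[0,\alpha(p)]$ combined with $F(p) = 0$ gives $F \leq 0$ on $[0,p]$. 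Using feasibility of $\bP^*_2(p)$, the dual objective telescopes to $v_p(p) F(p) + v_\alpha(p) F(\alpha(p)) = v_\alpha(p)$.

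For $p \in [\tau_1,\tau_2]$, I would use the limiting three-point distribution on $\{0, p_n, \beta\}$ with $p_n \uparrow p$, whose tail mass equals $w_\beta(p_n) \to w_\beta(p)$. The dual certificate is the $F$ determined by $F(0) = F(p) = 0$ and $F(\beta) = 1$; strict convexity of $\varphi$ now forces $\lambda_2 > 0$, so $F$ is strictly convex. The two roots in $[0,p]$ imply $F \leq 0$ there (a convex function lies below its chord, which is identically zero), and on $[p,\beta]$ the convex $F$ lies below the chord from $(p,0)$ to $(\beta,1)$, yielding $F \leq 1$. The dual objective collapses to $w_0(p) F(0) + w_p(p) F(p) + w_\beta(p) F(\beta) = w_\beta(p)$. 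Finally, for $p \in (\tau_2,\beta]$, the two-point distribution on $\{0,\tau_2\}$ from Lemma~\ref{lemma:non-empty} has all mass strictly below $p$, so $\bP(X \geq p) = 0$, trivially matched by the zero dual solution.

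The main technical obstacle is verifying the pointwise dual inequality $F(x) \leq \1\{x \geq p\}$ over all of $[0,\beta]$, since $F$ is pinned only at two or three prescribed points. The key observation is that strict convexity of $\varphi$ rigidly determines the sign of $\lambda_2$ in each regime, which fixes the global concave/convex shape of $F$ and lets the chord arguments close the gap. Continuity of the three pieces at the boundaries follows from direct substitution: at $p = \tau_1$, equation \eqref{eq:upsilon_1} forces $w_0(\tau_1) = 0$ and reduces $w_\beta(\tau_1)$ to $v_\alpha(\tau_1)$, while at $p = \tau_2$, equation \eqref{eq:upsilon_2} gives $w_\beta(\tau_2) = 0$.
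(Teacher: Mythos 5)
Your proposal is correct and follows essentially the same route as the paper: the same three-regime split, the same primal certificates ($\bP^*_2$, $\bP^*_3$, and the two-point distribution on $\{0,\tau_2\}$), and the same dual functions $F$ pinned at the support points with the sign of $\lambda_2$ forced by strict convexity of $\varphi$. The only (harmless) differences are that you handle the open/closed tail issue by taking $p_n\uparrow p$ directly for $\bP(X\geq p)$ where the paper instead solves $\inf_\bP\bP(X>p)$ and invokes a continuity lemma, and that you evaluate the dual objective by computing $\E_{\bP^*}[F(X)]$ at the primal certificate rather than substituting $s$ algebraically via \eqref{alpha} --- both are clean shortcuts that yield the same conclusion.
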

The proof can be found in Appendix \ref{ap:proof_pdproof2}. Next, we present the result for the maximal conditional expectation, whose proof can be found in Appendix \ref{ap:proof_pdproof3}.

\begin{proposition}\label{prop:sup_ce}
    Consider ambiguity set $\cP = \cP(\mu,s,\beta,\varphi)$. Then
    \begin{equation}
    \sup_{\bP \in \cP}\bE(X | X \geq p)=
    \begin{cases}
     \alpha(p), \quad & p\in(0,\tau_1], \\
     \beta, & p \in[\tau_1,\beta],
    \end{cases}
\end{equation}\label{eq:supcond}
which is continuous in $p \in (0,\beta]$.
\end{proposition}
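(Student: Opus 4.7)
The plan is to follow the semi-infinite LP duality template of Proposition~\ref{prop:sup_tp}, exploiting the equivalence
\begin{align*}
\bE(X\mid X\geq p)\leq C \quad \iff \quad \bE\bigl((X-C)\cdot \1\{X\geq p\}\bigr)\leq 0,
\end{align*}
which converts the supremum of a ratio into the supremum of a linear functional on $\cP$, amenable to standard LP duality. The two candidate targets read off from the statement are $C=\alpha(p)$ and $C=\beta$; the bulk of the proof is then to certify these upper bounds dually and to exhibit matching primal (limiting) distributions.

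For $p\in(0,\tau_1]$ I would take $C=\alpha(p)$ and seek a dual certificate $F(x)=\lambda_0+\lambda_1 x+\lambda_2\varphi(x)\geq (x-\alpha(p))\cdot \1\{x\geq p\}$ on $[0,\beta]$. The natural ansatz imposes three contact conditions $F(p)=0$, $F(\alpha(p))=0$ and $F'(\alpha(p))=1$, the last encoding tangency with the linear piece $x-\alpha(p)$ at its right root, which determine $(\lambda_0,\lambda_1,\lambda_2)$ uniquely. Strict convexity of $\varphi$ forces $\lambda_2>0$, so $F$ is strictly convex with zeros at $p$ and $\alpha(p)$; this yields $F\geq 0$ on $[0,p]$ and $F(x)\geq x-\alpha(p)$ on $[p,\beta]$ by the convex-tangent inequality, so $F$ is dual feasible. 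Substituting the defining identity \eqref{alpha} into the dual objective $\lambda_0+\lambda_1\mu+\lambda_2 s$ should then make it collapse to $0$, which is the crucial algebraic step. For primal tightness I would use the family $\bP^*_2(q)$ with $q<p$: monotonicity and continuity of $\alpha$ give $\alpha(q)\geq p$ for $q$ sufficiently close to $p$, so conditioning on $X\geq p$ retains only the right atom, yielding $\bE(X\mid X\geq p)=\alpha(q)\to \alpha(p)$ as $q\uparrow p$.

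For $p\in[\tau_1,\beta]$ the upper bound $\bE(X\mid X\geq p)\leq \beta$ is immediate from $X\leq \beta$ almost surely. For the matching lower bound I would use $\bP^*_2(\tau_1)$, supported on $\{\tau_1,\beta\}$: whenever $p>\tau_1$, conditioning on $X\geq p$ leaves only the atom at $\beta$ and gives conditional expectation exactly $\beta$; at the boundary $p=\tau_1$ I would approximate by $\bP^*_2(\tau_1-\epsilon)$ as $\epsilon\downarrow 0$, using $\alpha(\tau_1-\epsilon)\to \alpha(\tau_1)=\beta$. Continuity across $p=\tau_1$ is then automatic because $\alpha(\tau_1)=\beta$ makes the two pieces agree. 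The main obstacle I anticipate is the algebraic reduction of the dual objective to zero via \eqref{alpha} — this is precisely where the dispersion constraint of $\cP$ enters the bound — with the need for a limiting rather than attaining sequence at $p=\tau_1$ being a secondary subtlety.
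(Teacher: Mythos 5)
Your proposal is correct, but for the piece $p\in(0,\tau_1]$ it takes a genuinely different route from the paper. The paper does not set up a linearized LP at all: it observes that $\cP(\mu,s,\beta,\varphi)\subseteq\cP(\mu,s,\varphi)$ (the unbounded-support ambiguity set), cites prior results giving $\sup_{\bP\in\cP(\mu,s,\varphi)}\bE(X|X\geq p)=\alpha(p)$, and sandwiches: $\alpha(p)=\bE_{\bP^*_2(p^-)}(X|X\geq p)\leq\sup_{\bP\in\cP}\bE(X|X\geq p)\leq\sup_{\bP\in\cP(\mu,s,\varphi)}\bE(X|X\geq p)=\alpha(p)$. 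Your Charnes--Cooper-style linearization $\bE(X\mid X\geq p)\leq C\iff\bE((X-C)\1\{X\geq p\})\leq 0$, followed by a convex dual certificate with a zero at $p$ and tangency to $x-\alpha(p)$ at $\alpha(p)$, is sound and self-contained; in fact the ``crucial algebraic step'' you anticipate is automatic, since $\bP^*_2(p)$ satisfies the moment constraints and $F$ vanishes on its support $\{p,\alpha(p)\}$, so $\lambda_0+\lambda_1\mu+\lambda_2 s=\bE_{\bP^*_2(p)}[F(X)]=0$ with no computation, and your sign argument for $\lambda_2>0$ works because an affine or concave $F$ with zeros at $p<\alpha(p)$ cannot satisfy $F'(\alpha(p))=1$. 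For $p\in[\tau_1,\beta]$ both arguments use the trivial upper bound $\beta$; the paper's witness is the three-point distribution $\bP^*_3(p^-)$ while yours is the two-point $\bP^*_2(\tau_1)$ (with the same limiting device at the boundary), and both are valid since each places positive mass at $\beta$ and none in $[p,\beta)$ for $p>\tau_1$. Your approach buys a self-contained duality proof in the style of Propositions~\ref{prop:sup_tp} and~\ref{prop:inf_tp}, at the cost of more machinery; the paper's buys brevity at the cost of leaning on external results for the unbounded case.
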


We remark that an interesting distinction arises when considering whether the dispersion constraint is imposed as an upper bound or as an exact value. When $\beta=\infty$, we can replace the actual dispersion value with an upper bound without loss of generality, as the worst-case distribution will always take on the upper bound value of the dispersion. However, when $\beta<\infty$, one has to be careful as this equivalence breaks down on the interval $p\in[\tau_1,\tau_2]$. Nonetheless, the analysis remains tractable. We provide a sketch of the proof, but leave the full derivation to the interested reader. We write $\bar{s}$ for the upper bound of the dispersion. The infimum of the tail-bound becomes \begin{equation*}
    \inf_{\bP \in \cP(\mu,\bar{s},\beta,\varphi)}\bP(X \geq p)=
    \begin{cases}
     v_\alpha(p), \quad & p\in(0,\tau_1], \\
      \frac{\mu-p}{\beta-p}, \quad & p\in[\tau_1,\mu], \\
    0, & p \in[\mu,\beta],
    \end{cases}
\end{equation*} by imposing the additional constraint $\lambda_2 \leq 0$ to dual program \ref{dual2}. This gives a different solution for $p \in [\tau_1,\tau_2]$. One can then select a two-point distribution on $\{p,\beta\}$ with corresponding dual solution $(\lambda_0,\lambda_1,\lambda_2) = (\frac{-p}{\beta-\mu},\frac{1}{\beta-p},0)$ when $p \in [\tau_1,\mu]$ to achieve objective value $\frac{\mu-p}{\beta-p}$, while the degenerate distribution on $\mu$ is feasible and clearly worst-case when $p \in [\mu,\tau_2]$. Without proof, we state that for $p\in (0,\mu]$, the quantity $\sup_{\bP\in\cP}\bP(X\geq p)=1$, while $\sup_{\bP\in\cP}\bE[X|X\geq p]$ remains unaltered. By using Theorem \ref{CRDT}, which can be verified to still hold under these conditions, we get
\begin{align*}
\inf_{\bP\in\cP(\mu,\bar{s},\beta,\varphi)}\textup{CR}(p,\bP) = \begin{cases}
    \inf_{\bP\in\cP(\mu,\bar{s},\varphi)}\textup{CR}(p,\bP), &\quad p \in (0,\tau_1]\\
    \inf_{\bP\in\cP(\mu,\beta)}\textup{CR}(p,\bP), &\quad p \in [\tau_1,\mu]\\
    0, &\quad p\in [\mu,\beta]
\end{cases}.
\end{align*} In what follows, we consider the dispersion $s$ as an exact value, consistent with the assumption made throughout the paper.

\subsection{Proof of Theorem \ref{CRDT}}\label{sec23pointsfundproof}
Let $p \in (\tau_2,\beta]$. Consider the two-point distribution $\bP_{\tau_2}$ supported on $0$ and $\tau_2$ and observe that $\textup{OPT}(\bP_{\tau_2})>0$ while $\textup{REV}(p,\bP_{\tau_2}) = 0.$ Hence, $$\inf_{\bP\in\cP}\textup{CR}(X\geq p) = 0.$$

Next, we will argue for $p \in (0,\tau_1]$ that $\bP^*_2(p^-)$ is the optimal solution and that when $p \in [\tau_1,\tau_2]$, then $\bP^*_3(p^-)$ is the optimal solution. For ease of writing we denote $$\bP^* = \bP^*(p^-) = 
\begin{cases}
\bP^*_2(p^-), & p \in (0, \tau_1], \\
\bP^*_3(p^-), & p \in [\tau_1, \tau_2],
\end{cases}
$$
and we mention that this can be rewritten in the following (intuitive) form:
$$\bP^*=
\left\{ \begin{array}{ll}
        0 & \text{w.p. } 1 - \sup_{\bP \in \cP} \bP(X \geq p^-), \\
        p^- & \text{w.p. } \sup_{\bP \in \cP} \bP(X \geq p^-) - \inf_{\bP \in \cP} \bP(X \geq p^-), \\
        y(p^-) & \text{w.p. } \inf_{\bP \in \cP} \bP(X \geq p^-),
    \end{array}\right.
$$
with $y(x) = \sup_{\bP \in \cP}\bE(X|X\geq x)$. Now, let $p \in (0,\tau_2]$ and consider $\bP^*\in\cP$. The competitive ratio will be evaluated in $\bP^*$ to serve as an upper bound. Observe that
\begin{align*}
    \textup{REV}(p,\bP^*) = p\bP^*(X\geq p) = p\bP^*(X = y(p^-))=p\inf_{\bP\in \cP}\bP(X\geq p^-)= p\inf_{\bP\in \cP}\bP(X\geq p),
\end{align*}
where the last equality follows from the continuity of \eqref{eq:tailinf}, and

\begin{align*}
\textup{OPT}(\bP^*) &= \sup_{t>0} t \bP^*(X \geq t) \\ 
&= \max\left\{p^-\bP^*(X \geq p^-), y(p^-)\bP^*(X \geq y(p^-))\right\} \\
&= \max\left\{p^-(\bP^*(X = p^-) + \bP^*(X = y(p^-))), y(p^-)\bP^*(X = y(p^-))\right\} \\
&= \max\left\{p^-\sup_{\bP\in \cP}\bP(X\geq p^-), y(p^-)\inf_{\bP\in \cP}\bP(X\geq p^-)\right\} \\
&= \max\left\{p\sup_{\bP\in \cP}\bP(X\geq p), y(p)\inf_{\bP\in \cP}\bP(X\geq p)\right\}.
\end{align*}
Hence,
\begin{align*}
    \textup{CR}(p,\bP^*) =
    \frac{\textup{REV}(p,\bP^*)}{\textup{OPT}(\bP^*)} = \min\left\{\frac{\inf_{\bP\in \cP}\bP(X\geq p)}{\sup_{\bP\in \cP}\bP(X\geq p)}, \frac{p}{y(p)}\right\}.
\end{align*}
 {Next, consider an arbitrary distribution from the ambiguity set $\bP_a \in \cP$ and verify that} $\textup{CR}(p,\bP_a) \geq \min\left\{\frac{\inf_{\bP\in \cP}\bP(X\geq p)}{\sup_{\bP\in \cP}\bP(X\geq p)}, \frac{p}{y(p)}\right\}$. Let $p^*$ denote an optimal take-it-or-leave-it price for $\bP_a$, so that $\textup{OPT}(\bP_a) = p^* \bP_a(X\geq p^*)$. 
 We next use a natural case distinction between $p^*\leq p$ and $p^*> p$, which was also used in \cite{chen2022distribution} and \cite{giannakopoulos2019robust}.
 First assume $p^* \leq p$. For this case, we need the following monotonicity result for the best-case revenue function, stated as a self-contained lemma, where we slightly abuse notation by allowing $p$ to vary:
 \begin{lemma}\label{lemma:g(t)}
    Let $p \in (0,\tau_2]$. The function 
    \begin{align}\label{function_g}
        g(p) = p \cdot \sup_{\Prob \in \cP}  \Prob(X \geq p)
    \end{align}
    is non-decreasing in $p$.
\end{lemma}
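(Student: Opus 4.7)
\begin{customproofsketch}
The plan is to use Proposition~\ref{prop:sup_tp} to split the range $(0,\tau_2]$ into the two pieces $(0,\tau_1]$ and $[\tau_1,\tau_2]$, treat each piece separately, and finally verify continuity at the seam $p=\tau_1$.

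On $(0,\tau_1]$ Proposition~\ref{prop:sup_tp} gives $\sup_{\P\in\cP}\P(X\geq p)=1$, so $g(p)=p$ is trivially non-decreasing. The substantive work is on $[\tau_1,\tau_2]$, where $g(p)=p(w_p(p)+w_\beta(p))$. The key reformulation is to use the mean constraint $p\,w_p(p)+\beta\,w_\beta(p)=\mu$ from \eqref{eq:equations_threepoint} to write
\begin{align*}
g(p)=p\,w_p(p)+p\,w_\beta(p)=\mu-(\beta-p)\,w_\beta(p).
\end{align*}
Since $w_\beta(p)\geq 0$ and $\beta-p\geq 0$, differentiating gives $g'(p)=w_\beta(p)-(\beta-p)\,w_\beta'(p)$, so it is enough to prove the monotonicity statement $w_\beta'(p)\leq 0$ on $[\tau_1,\tau_2]$.

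To establish this, the plan is to introduce the secant-slope substitution $q(p)=(\varphi(p)-\varphi(0))/p$. Because $\varphi$ is strictly convex on $[0,\beta]$ with finite value at $0$, the function $q(p)$ is strictly increasing in $p$ (a standard property of secant slopes of convex functions through a fixed point). Substituting $\varphi(p)-\varphi(0)=p\,q$ into the explicit formula for $w_\beta(p)$ in \eqref{eq:three_point_masses} recasts $w_\beta$ as a rational function of $q$,
\begin{align*}
w_\beta=\frac{(s-\varphi(0))-\mu\,q}{(\varphi(\beta)-\varphi(0))-\beta\,q},
\end{align*}
whose derivative with respect to $q$ has numerator $\beta(s-\varphi(0))-\mu(\varphi(\beta)-\varphi(0))$. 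Using the defining equation \eqref{eq:upsilon_2} for $\tau_2$ to substitute $s-\varphi(0)=\mu(\varphi(\tau_2)-\varphi(0))/\tau_2$, this numerator equals $\mu\bigl[\beta(\varphi(\tau_2)-\varphi(0))/\tau_2-(\varphi(\beta)-\varphi(0))\bigr]$, which is strictly negative because the secant slopes of the convex function $x\mapsto \varphi(x)-\varphi(0)$ through the origin are increasing and $\tau_2<\beta$. Hence $dw_\beta/dq<0$, and combined with $dq/dp>0$ this yields $w_\beta'(p)\leq 0$, completing the interior argument.

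To patch the pieces, continuity at $\tau_1$ is checked using \eqref{eq:upsilon_1}, which forces $w_0(\tau_1)=0$ and hence $g(\tau_1^+)=\tau_1(w_p(\tau_1)+w_\beta(\tau_1))=\tau_1=g(\tau_1^-)$. Monotonicity on each piece plus continuity at the seam then gives monotonicity on all of $(0,\tau_2]$. The one step that requires genuine care is proving $w_\beta'(p)\leq 0$ for an arbitrary strictly convex dispersion measure $\varphi$; attempting a direct quotient-rule calculation on the formula in \eqref{eq:three_point_masses} becomes unwieldy, and the main insight is that the secant-slope substitution $q(p)=(\varphi(p)-\varphi(0))/p$ reduces the problem to a single one-dimensional rational function whose sign can be read off directly from the defining equation of $\tau_2$ and convexity of $\varphi$.
\end{customproofsketch}
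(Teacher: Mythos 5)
Your proof is correct, but it takes a genuinely different and noticeably cleaner route than the paper's. The paper differentiates $g(p)=p\,(w_p(p)+w_\beta(p))$ directly as a quotient, expands the resulting expression into eight terms, regroups them into products of secant-slope differences using the substitution $s-\varphi(0)=\mu(\varphi(\tau_2)-\varphi(0))/\tau_2$, and finishes with a two-case analysis on the sign of $\tfrac{\varphi(\tau_2)-\varphi(0)}{\tau_2}-\varphi'(p)$. Your argument sidesteps all of that: the identity $g(p)=\mu-(\beta-p)\,w_\beta(p)$, read off from the mean constraint in \eqref{eq:equations_threepoint}, reduces the lemma to showing that $w_\beta$ is non-increasing, and the secant-slope change of variables turns $w_\beta$ into a M\"obius-type function of $(\varphi(p)-\varphi(0))/p$ whose derivative has the \emph{constant} numerator $\beta(s-\varphi(0))-\mu(\varphi(\beta)-\varphi(0))$; its negativity follows from \eqref{eq:upsilon_2} and convexity exactly as you say. (It is worth noting that this constant is precisely the negative of the numerator of $w_p(p)$ in \eqref{eq:three_point_masses}, so its sign is already delivered by Lemma~\ref{lem:nn3pd}; and the continuity check at $\tau_1$ duplicates what Proposition~\ref{prop:sup_tp} asserts.) What your approach buys is brevity, no case distinction, and a transparent interpretation --- $g$ increases because the product of the non-negative, decreasing factors $(\beta-p)$ and $w_\beta(p)$ can only shrink; what the paper's computation buys is essentially nothing beyond being the first thing one tries. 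Two cosmetic cautions: your auxiliary variable $q$ collides with the moment exponent $q$ used in Section~\ref{sec:frac}, so rename it if this were to be inserted into the paper; and you should state explicitly that the denominator $(\varphi(\beta)-\varphi(0))-\beta q>0$ (it is $1/p$ times the common denominator of \eqref{eq:three_point_masses}, already shown positive), so that $w_\beta$ is a well-defined differentiable function of $q$ on the relevant range.
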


\noindent The best-case revenue function  \eqref{function_g} does not appear in 
\cite{chen2022distribution} and \cite{giannakopoulos2019robust}, and turns out to be essential for dealing with the additional challenges when incorporating the maximum valuation bound $\beta$ in the ambiguity set. 
The proof of Lemma~\ref{lemma:g(t)} is presented in Appendix \ref{app:a}. Employing Lemma~\ref{lemma:g(t)} yields the following lower bound:
\begin{align*}
    \textup{CR}(p;\bP_a) &\geq
    \frac{p\inf_{\bP \in \cP} \bP(X\geq p)}{\sup_{t \in (0,p]}g(t)} = \frac{\inf_{\bP \in \cP} \bP(X\geq p)}{\sup_{\bP \in \cP} \bP(X\geq p)}\\
    &\geq \min\left\{\frac{\inf_{\bP\in \cP}\bP(X\geq p)}{\sup_{\bP\in \cP}\bP(X\geq p)}, \frac{p}{y(p)}\right\}.
\end{align*}
 Next assume $p^* > p$ and observe that
\begin{align*}
\frac{p^* \bP_a(X \geq p^*)}{\bP_a(X \geq p)} &= \int_{\{x|x \geq p^*\}} \frac{p^*}{\bP_a(X \geq p)}\ {\rm d} \bP_a(x)\\
&\leq \int_{\{x|x \geq p^*\}} \frac{x}{\bP_a(X \geq p)}\ {\rm d} \bP_a(x)\\
&\leq \int_{\{x|x \geq p^*\}} \frac{x}{\bP_a(X \geq p)}\ {\rm d} \bP_a(x) + \int_{\{x|p \leq x < p^*\}} \frac{x}{\bP_a(X \geq p)}\ {\rm d} \bP_a(x)\\
&= \int_{\{x|x \geq p\}} \frac{x} {\bP_a(X \geq p)}\ {\rm d} \bP_a(x)\\
&= \bE_{\bP_a}(X|X \geq p).
\end{align*}\
\noindent Hence, the following tight lower bound can be constructed:
\begin{align*}
    \textup{CR}(p;\bP_a) &= \frac{p\bP_a(X \geq p)}{p^* \bP_a(X \geq p^*)} \geq
    \frac{p}{\bE_{\bP_a}[X|X \geq p]} \geq \frac{p}{y(p)}\\
    &\geq \min\left\{\frac{\inf_{\bP\in \cP}\bP(X\geq p)}{\sup_{\bP\in \cP}\bP(X\geq p)}, \frac{p}{y(p)}\right\}.
\end{align*}
The equality between the lower and upper bound proves the assertion.

\subsection{Proof of Theorem \ref{thm22}}
Let $p\in (0,\tau_2]$ and observe that $$\bP^*(X\geq p) = \bP^*(X=\sup_{\bP\in\cP}\bE(X|X\geq p^-)) = \inf_{\bP\in\cP}\bP(X\geq p^-) = \inf_{\bP\in\cP}\bP(X\geq p),$$ where the last equality follows from the continuity of \eqref{eq:tailinf}. Hence, $\bP^*(p^-)$ is a limiting solution of $\inf_{\bP\in\cP}\bP(X \geq p)$ and therefore a limiting solution of $\inf_{\bP\in\cP}\textup{REV}(p,\bP)$. Additionally, from Theorem \ref{CRDT} we know that $\textup{CR}(p,\bP^*) = \inf_{\bP\in\cP}\textup{CR}(p,\bP)$.
Let $p \in (\tau_2,\beta]$. From Theorem \ref{CRDT} we know that $\bP_{\tau_2} \in \arg\inf_{\bP\in\cP}\textup{CR}(p,\bP)$. Furthermore, since $p > \tau_2$, we have $$\bP_{\tau_2}(X\geq p) = 0 = \inf_{\bP\in\cP}\textup{REV}(p,\bP).$$ Therefore, $\bP_{\tau_2} \in \arg\inf_{\bP\in\cP}\textup{REV}(p,\bP)$. This finishes the proof.

The reason that both expected revenue and competitive ratio share the same solution for $p\in (0,\tau_2]$ can intuitively be understood by the fact that $\bP^*(p^-)$ solves both $$\inf_{\bP\in\cP}\bP(X\geq p) \; \text{ and } \; \sup_{\bP\in \cP}\bE(X|X \geq p),$$ while $\bP^*(p)$ solves $\sup_{\bP\in\cP}\bP(X\geq p)$, which exclusively plays a role in the $\textup{OPT}(\bP)$-part of the competitive ratio. Optimal expected revenue is unrestricted by the price of the seller, and can instead select $p^-$.

\section{Optimal robust pricing}\label{sec:rprice}
We now address the maximin ratio problem \eqref{CPR}, starting from the expression for the tight lower bound on the competitive ratio provided in Theorem~\ref{CRDT}. 
We first consider variance as dispersion measure in Section~\ref{sec:var} and then turn to fractional moments as dispersion measure in Section~\ref{sec:frac}. In both cases, we derive precise bounds for the three key quantities involved in the fundamental relation \eqref{decomposition}. These bounds allow us to obtain an exact expression for the worst-case competitive ratio. Next, we solve for the price that maximizes this worst-case CR. The optimal price is shown to be one of two possible candidates: a relatively low price and a relatively high price. We demonstrate that the optimal choice between these two prices is determined by the level of dispersion.

\subsection{Variance}\label{sec:var}
Consider the ambiguity set $\cP(\mu,\sigma,\beta)$ containing all distributions with mean $\mu$, variance $\sigma^2$ and maximal value $\beta$. This ambiguity set is a special case of \eqref{eq:ambiguityk} with $\varphi(x) = x^2$ and $s = \sigma^2 + \mu^2$. From $\eqref{eq:upsilon_1}$ and \eqref{eq:upsilon_2} we obtain $\tau_1 = \mu - \frac{\sigma^2}{\beta-\mu}$ and $\tau_2 = \mu + \frac{\sigma^2}{\mu}$, and the quantities $$\sup_{\bP\in\cP(\mu,\sigma,\beta)}\bP(X\geq p), \ \ \inf_{\bP\in\cP(\mu,\sigma,\beta)}\bP(X\geq p), \ \ \sup_{\bP\in\cP(\mu,\sigma,\beta)}\bE(X|X\geq p),$$ follow from Propositions \ref{prop:sup_tp}, \ref{prop:inf_tp} and \ref{prop:sup_ce}.
Substituting these quantities in Theorem \ref{CRDT} yields the tight lower bound for the competitive ratio
$$R(p,\sigma) = \inf_{\bP \in \cP(\mu,\sigma,\beta)}\textup{CR}(p,\bP) = \begin{cases}
     \min\left\{\frac{(\mu-p)^2}{(\mu-p)^2+\sigma^2},\frac{p(\mu-p)}{\mu(\mu-p)+\sigma^2}\right\}, \quad &  p\in(0,\tau_1], \\
     \min\left\{\frac{p(\mu^2+\sigma^2-p\mu)}{(\beta-p)(\mu(\beta+p-\mu)-\sigma^2)},\frac{p}{\beta}\right\}, \quad & p\in[\tau_1,\tau_2], \\
    0, \quad & p \in[\tau_2,\beta].
    \end{cases}$$ 
With this explicit expression for the worst-case ratio we can solve \eqref{CPR} and obtain the following characterization of the optimal prices (the proof is presented in Appendix \ref{appendix_variance}):
\begin{theorem}\label{th:optimal_price_variance}
    The optimal robust price $p_{(\mu,\sigma,\beta)}^*=\arg\sup_{p>0}\inf_{\bP\in\cP(\mu,\sigma,\beta)}\textup{CR}(p,\bP)\nonumber$ is
    \begin{align}
        p_{(\mu,\sigma,\beta)}^* = \begin{cases}
            p_l^*, \quad & \sigma \leq \sigma^{*}, \\
            p_h^* = \max\{p_{h1}^*,p_{h2}^*\}, \quad &    \sigma \geq \sigma^{*},
        \end{cases}
    \end{align}
with
\begin{align}
    p_{l}^*&=\mu - \sigma \cdot \left(\left(\frac{\mu}{2\sigma}+\sqrt{\frac{8}{27}+\frac{\mu}{2\sigma}}\right)^{\frac{1}{3}}+\left(\frac{\mu}{2\sigma}-\sqrt{\frac{8}{27}+\frac{\mu}{2\sigma}}\right)^{\frac{1}{3}}\right),\\
    p_{h1}^* &= \frac{1}{2}\left(\beta+\tau_2-\sqrt{(3\beta-\tau_2)^2-4\beta^2}\right),\ \ p_{h2}^* = \frac{\tau_2}{2}.
\end{align}
Furthermore, $\sigma^*$ is implicitly defined as the solution to
$R(p_l^*,\sigma^*) = R(p_h^*,\sigma^*)$. 
\end{theorem}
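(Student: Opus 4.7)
The plan is to substitute the explicit closed-form bounds from Propositions~\ref{prop:sup_tp}, \ref{prop:inf_tp}, and \ref{prop:sup_ce}, specialised to $\varphi(x)=x^2$ with $\tau_1 = \mu - \sigma^2/(\beta-\mu)$ and $\tau_2 = \mu + \sigma^2/\mu$, into the decomposition of Theorem~\ref{CRDT}. This produces the piecewise closed form of $R(p,\sigma):=\inf_{\bP\in\cP(\mu,\sigma,\beta)}\textup{CR}(p,\bP)$ displayed just above the statement. Because $R\equiv 0$ on $[\tau_2,\beta]$, the task reduces to maximising $R$ separately over the low regime $(0,\tau_1]$ and the high regime $[\tau_1,\tau_2]$ and then comparing the two local maxima.

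On the low regime, $R=\min\{f_1,f_2\}$ with $f_1(p)=(\mu-p)^2/((\mu-p)^2+\sigma^2)$ and $f_2(p)=p(\mu-p)/(\mu(\mu-p)+\sigma^2)$. Direct differentiation shows that $f_1$ is strictly decreasing while $f_2$ is strictly increasing on the relevant sub-interval, with $f_2(0^+)<f_1(0^+)$, so the two curves cross exactly once and the minimum is maximised precisely at that crossing. Equating $f_1=f_2$ and clearing denominators, the substitution $u=\mu-p$ reduces the problem to the depressed cubic $u^3+2\sigma^2 u-\sigma^2\mu=0$, which has a unique real root given by Cardano's formula; this recovers the stated expression for $p_l^*$.

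On the high regime, the identities $\mu^2+\sigma^2=\mu\tau_2$ and $\mu(\beta-\mu)-\sigma^2=\mu(\beta-\tau_2)$ let one rewrite the first term of the min as $g_1(p)=p(\tau_2-p)/((\beta-p)(\beta-\tau_2+p))$, with $g_2(p)=p/\beta$. A logarithmic-derivative computation shows that $g_1$ has a unique interior critical point at $p_{h2}^*=\tau_2/2$, which is its maximiser, while equating $g_1=g_2$ reduces to the quadratic $p^2-(\beta+\tau_2)p+2\beta\tau_2-\beta^2=0$ with admissible root $p_{h1}^*$. I then distinguish two cases: when $p_{h1}^*\geq p_{h2}^*$ the crossing of $g_1$ and $g_2$ lies on the decreasing branch of $g_1$, so $\min\{g_1,g_2\}$ equals $g_2=p/\beta$ up to the crossing and is maximised at $p_{h1}^*$; when $p_{h1}^*<p_{h2}^*$ the crossing lies on the increasing branch, so the min equals $g_1$ beyond it and is maximised at the interior maximum $p_{h2}^*$. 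Both sub-cases are summarised uniformly by $p_h^*=\max\{p_{h1}^*,p_{h2}^*\}$.

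The remaining and, I expect, most delicate step is the comparison between $R(p_l^*,\sigma)$ and $R(p_h^*,\sigma)$ as $\sigma$ varies. Using a Danskin-type envelope argument, $\frac{d}{d\sigma}R(p_l^*,\sigma)$ can be written as a convex combination of the $\sigma$-partials of $f_1$ and $f_2$ at $p_l^*$, both of which are strictly negative, so $R(p_l^*,\cdot)$ is strictly decreasing; in contrast, $R(p_h^*,\cdot)$ is non-decreasing because $dp_{h1}^*/d\tau_2>0$ and $d[\tau_2^2/(2\beta-\tau_2)^2]/d\tau_2>0$, combined with $d\tau_2/d\sigma>0$. Together with the boundary behaviour $R(p_l^*,0^+)=1$ and the collapse of the low regime ($\tau_1\to 0$) as $\sigma$ approaches its admissible upper bound $\sqrt{\mu(\beta-\mu)}$, the intermediate value theorem yields a unique crossing $\sigma^*$ implicitly defined by $R(p_l^*,\sigma^*)=R(p_h^*,\sigma^*)$, completing the characterisation.
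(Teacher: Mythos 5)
Your overall route coincides with the paper's: substitute the three bounds from Propositions~\ref{prop:sup_tp}--\ref{prop:sup_ce} into Theorem~\ref{CRDT}, analyse the low regime via the crossing of $f_{1a}$ and $f_{1b}$ (your cubic $u^3+2\sigma^2u-\sigma^2\mu=0$ is exactly the equation behind $p_l^*$, and deriving it via Cardano is more self-contained than the paper, which cites \cite{giannakopoulos2019robust} for this regime), analyse the high regime via the critical point $\tau_2/2$ of $f_{2a}$ and its intersection with $p/\beta$, and finally obtain $\sigma^*$ from monotonicity in $\sigma$ of the two candidate values plus the intermediate value theorem --- the same envelope-theorem argument the paper uses.

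There is, however, one genuine gap: you maximise over each regime as if the unconstrained maximisers always lie inside their intervals, and you never address the boundary point $\tau_1$. The crossing $p_l^*$ need not lie in $(0,\tau_1]$, and $p_h^*=\max\{p_{h1}^*,p_{h2}^*\}$ need not lie in $[\tau_1,\tau_2]$; when they do not, the constrained local maxima are $\min\{p_l^*,\tau_1\}$ and $\max\{\tau_1,p_h^*\}$ respectively, so a priori the global optimum could be $\tau_1$ itself, contradicting the statement that the optimiser is always $p_l^*$ or $p_h^*$. The paper devotes a separate argument to ruling this out (in Lemma~\ref{lemma:price1}): assuming $p_h^*<\tau_1<p_l^*$, it uses $f_{1a}(\tau_1)>f_{1b}(\tau_1)$ together with $f_{1a}(\tau_1)=f_{2a}(\tau_1)$ and $f_{1b}(\tau_1)=f_{2b}(\tau_1)$ to force the high-regime crossing to satisfy $r_h\geq\tau_1$, a contradiction. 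Without some version of this step your proof does not establish the dichotomy claimed in the theorem. A smaller point in the same vein: $f_{1b}$ is not globally increasing on $(0,\mu)$ (it vanishes at both endpoints), so your low-regime argument needs the additional check that its turning point lies to the right of the crossing --- this does hold (at the crossing one has $\mu u^2+2u\sigma^2>u^3+2u\sigma^2=\mu\sigma^2$ since $u<\mu$), but it should be stated rather than absorbed into ``direct differentiation shows.''
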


Theorem~\ref{th:optimal_price_variance} uncovers several new insights into robust pricing, which merit further reflection. First, we explain why and how the dispersion threshold, denoted by $\sigma^*$, plays a critical role. Figure~\ref{fig:pricevsratio} shows several worst-case CR plots as a function of price. These three curves correspond to the same mean and maximum valuation ($\mu=0.5$, $\beta=1$) but differ in their dispersion levels. In this particular case, $\sigma^*$ from Theorem~\ref{th:optimal_price_variance} numerically evaluates to $0.3194$, indicating that the seller should choose either a low or high price depending on whether the value of $\sigma$ is below or above $\sigma^*$. The three plots in Figure~\ref{fig:pricevsratio} clearly illustrate the root cause of this shift: the worst-case CR function has a global maximum that can occur at one of three locations, corresponding to the three prices in Theorem~\ref{th:optimal_price_variance}. The lowest price achieves the global maximum when $\sigma$ is sufficiently small, as is the case for $\sigma=0.3$. The $\sigma$ values in Figure~\ref{fig:pricevsratio} are deliberately chosen to be close to $\sigma^*$ to highlight this transition. For instance, when $\sigma=0.35$, the high price becomes optimal, and this effect is even more pronounced for $\sigma=0.4$. Figure~\ref{fig:pricevsratio} also aids in understanding why the transition from low to high pricing is discontinuous: the global maximum of the objective function shifts abruptly from one local maximum to another. This can be further understood by examining the worst-case CR, which has two distinct regimes for the optimal price: either $p \in (0,\tau_1]$, or $[\tau_1,\tau_2]$. The dispersion determines which regime is optimal. Although the worst-case CR remains continuous as the regime shifts, the optimal price does not, as it jumps abruptly from $p^*_l$ to $p^*_h$.

\begin{figure}[h!]
    \centering
    \includegraphics[width=0.6\linewidth]{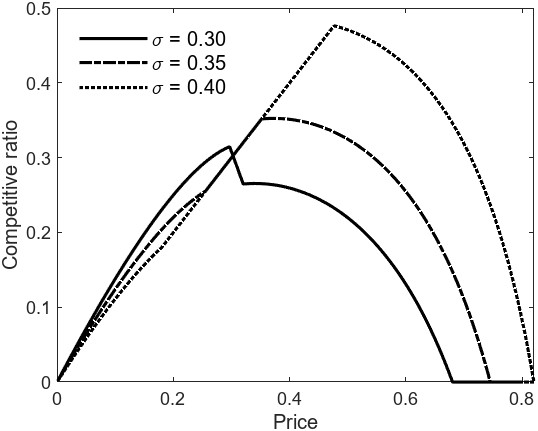}
     \caption{Function $p \mapsto R(p,\sigma)$ with $\mu = 0.5$ and $\beta = 1$.}
    \label{fig:pricevsratio}
\end{figure}

Let us further explore the main implications of Theorem~\ref{th:optimal_price_variance} and connect them to various established results in the pricing literature (Lemma  \ref{lem:rev_price})  . The  price denoted as $p^*_l$ decreases with $\sigma$, while $p^*_h$ increases as $\sigma$ grows. This behavior is illustrated in~Figure \ref{fig:2prices}. The sharp transition from the lower price $p^*_l$ to the higher price $p^*_h$ can be explained as follows: Low variance reflects a stable and relatively homogeneous market, where the optimal strategy for the seller is to set a lower price, prioritizing a higher conversion rate. Conversely, high variance indicates a more volatile market with less predictable demand, leading the seller to adopt a higher price, focusing on a smaller, more selective segment of the market. The threshold $\sigma^*$ represents the critical level of market risk at which the seller shifts from a mass-market approach to a niche market strategy. Although this shift can be intuitively understood, the specific prices and threshold $\sigma^*$ are influenced by subtle interactions among all the model parameters.

One part of the optimal price in Theorem~\ref{th:optimal_price_variance} has been discovered recently in two papers, using independent approaches. To explain this, consider 
$\cP(\mu,\sigma)$, the set of all distribution with mean $\mu$, variance $\sigma^2$ and no constraint on the maximal value. For this ambiguity set, the optimal price should follow from Theorem~\ref{th:optimal_price_variance} by letting $\beta\to\infty$. When we do so, we indeed recover a result recently obtained in 
\cite{giannakopoulos2019robust} and \cite{chen2022distribution}: 
\begin{align}
p_{(\mu,\sigma)}^* &= \arg\sup_{p>0}\inf_{\bP\in\cP(\mu,\sigma)}\textup{CR}(p,\bP)=p^*_l.
\end{align}
This shows that when valuations are unconstrained, the risk threshold $\sigma^*$ goes to infinity, so the high price  $p^*_h$ disappears and the low price $p^*_l$ becomes the unique price function. As a result, the competitive ratio will deteriorate towards zero when variance becomes maximal, rather than converge to one. This highlights the importance of placing a cap on valuations. When valuations can grow arbitrarily large, Nature will exploit this. Consequently, the max-min seller is forced to adopt an low pricing strategy, leading to a competitive ratio close to zero. However, if the seller knows in advance that valuations are capped at a certain maximum, a more favorable high-pricing strategy becomes viable. This strategy proves particularly relevant in situations with moderate to high valuation dispersion and leads to significant revenue improvements. Table~\ref{tabbb1} displays some numerical results for increasing $\sigma$-values, confirming the switch from low to high pricing. Observe that without the cap $\beta$, this results in a loss of performance when a high-valuation segment actually exists but is not accounted for. Table~\ref{tabbb2} shows results for a fixed variance and increasing maximal value, in which case high pricing proves optimal for sufficiently small $\beta$.

\begin{table}[ht!]
\centering
\begin{tabular}{@{}ccccc@{}}
\toprule
$\sigma$ & $p^*_{(\mu,\sigma)}$ & $p^*_{(\mu,\sigma,\beta)}$ & $R(p^*_{(\mu,\sigma)},\sigma)$ & $R(p^*_{(\mu,\sigma,\beta)},\sigma)$ \\ \midrule
0.00 & 0.5000 & 0.5000 & 1.0000 & 1.0000 \\
0.05 & 0.4076 & 0.4076 & 0.7734 & 0.7734 \\
0.10 & 0.3672 & 0.3672 & 0.6382 & 0.6382 \\
0.15 & 0.3404 & 0.3404 & 0.5310 & 0.5310 \\
0.20 & 0.3213 & 0.3213 & 0.4439 & 0.4439 \\
0.25 & 0.3073 & 0.3073 & 0.3728 & 0.3728 \\
0.30 & 0.2967 & 0.2967 & 0.3147 & 0.3147 \\
0.35 & 0.2886 & 0.3725 & 0.2886 & 0.3524 \\
0.40 & 0.2823 & 0.4763 & 0.2823 & 0.4763 \\
0.45 & 0.2773 & 0.6406 & 0.2773 & 0.6406 \\
0.50 & 0.2733 & 1.0000 & 0.2733 & 1.0000 \\ \bottomrule
\end{tabular}
\caption{Assessing the value of including $\beta$ in addition to $\mu$ and $\sigma$ when $\mu = 0.5$ and $\beta = 1$.}\label{tabbb1}
\end{table}

\begin{table}[]
\centering
\begin{tabular}{@{}ccccccccccc@{}}
\toprule
\multicolumn{1}{c}{\multirow{2}{*}{$\beta$}} & \multicolumn{1}{c}{} & \multicolumn{4}{c}{$\mu = 0.5$} & \multicolumn{1}{c}{} & \multicolumn{4}{c}{$\mu = 1$} \\ \cmidrule(lr){3-6} \cmidrule(l){8-11} 
\multicolumn{1}{c}{} & \multicolumn{1}{c}{} & \multicolumn{2}{c}{$p^*_{(\mu,\sigma,\beta)}$} & \multicolumn{1}{c}{} & \multicolumn{1}{c}{$R(p^*_{(\mu,\sigma,\beta)},\sigma)$} & \multicolumn{1}{c}{} & \multicolumn{2}{c}{$p^*_{(\mu,\sigma,\beta)}$} & \multicolumn{1}{c}{} & \multicolumn{1}{c}{$R(p^*_{(\mu,\sigma,\beta)},\sigma)$} \\ \midrule
1.0 &  & $p^*_{h1}$ & 1.0000 &  & 1.0000 &  & - & - &  & - \\
1.1 &  & $p^*_{h1}$ & 0.7146 &  & 0.6496 &  & - & - &  & - \\
1.2 &  & $p^*_{h1}$ & 0.6000 &  & 0.5000 &  & - & - &  & - \\
1.3 &  & $p^*_{h1}$ & 0.5077 &  & 0.3906 &  & $p^*_{h1}$ & 1.0188 &  & 0.7837 \\
1.4 &  & $p^*_{h2}$ & 0.5000 &  & 0.3086 &  & $p^*_{h1}$ & 0.8606 &  & 0.6147 \\
1.5 &  & $p^*_{h2}$ & 0.5000 &  & 0.2500 &  & $p^*_{h1}$ & 0.7500 &  & 0.5000 \\
1.6 &  & $p^*_{h2}$ & 0.5000 &  & 0.2066 &  & $p^*_{h1}$ & 0.6565 &  & 0.4103 \\
1.8 &  & $p^*_l$ & 0.2733 &  & 0.1705 &  & $p^*_l$ & 0.6145 &  & 0.3728 \\
2.0 &  & $p^*_l$ & 0.2733 &  & 0.1705 &  & $p^*_l$ & 0.6145 &  & 0.3728 \\
$\infty$ &  & $p^*_l$ & 0.2733 &  & 0.1705 &  & $p^*_l$ & 0.6145 &  & 0.3728 \\ \bottomrule
\end{tabular}
\caption{Impact of $\beta$ on $p^*_{(\mu,\sigma,\beta)}$ and $R(p^*_{(\mu,\sigma,\beta)},\sigma)$ when $\sigma = 0.5$.}\label{tabbb2}
\end{table}

We will next compare the maximin ratio price with the maximin revenue price in Theorem \ref{th:2prices}. The maximin revenue price was solved for the mean-variance ambiguity set $\cP(\mu,\sigma)$ by \cite{azar2012optimal} and more recently in \cite{suzdaltsev2020distributionally} and \cite{van2024robust} for $\cP(\mu,\sigma,\beta)$. Lemma \ref{lem:rev_price} precisely captures this result.
\begin{lemma}[\cite{azar2012optimal,suzdaltsev2020distributionally,van2024robust}]\label{lem:rev_price}
\begin{align}
\pi^*_{(\mu,\sigma,\beta)}&=\arg\sup_{p>0}\inf_{\bP\in\cP(\mu,\sigma,\beta)}\textup{REV}(p,\bP) \nonumber \\ &=\begin{cases}
            \pi^*_l = \mu - \sigma \cdot \large(\large(\tfrac{\mu}{\sigma}+\sqrt{1+(\tfrac{\mu}{\sigma})^2}\large)^{\frac{1}{3}} + \large(\tfrac{\mu}{\sigma}-\sqrt{1+(\tfrac{\mu}{\sigma})^2}\large)^{\frac{1}{3}}\large), \quad &  \sigma \leq \delta^{*} \\
            \pi^*_h = \beta - \sqrt{\beta(\beta-\mu-\frac{\sigma^2}{\mu})}, \quad & \sigma \geq \delta^{*}
        \end{cases}
\end{align}
with $\delta^* \in (0,\sigma_{\textup{max}})$ an implicit threshold value. 
\end{lemma}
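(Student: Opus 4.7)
\begin{customproofsketch}
The plan is to reduce the maximin revenue problem to an explicit one-variable optimization by using the worst-case tail-probability bound from Proposition~\ref{prop:inf_tp}, specialized to $\varphi(x)=x^2$ and $s=\mu^2+\sigma^2$. Under this specialization one has $\tau_1=\mu-\sigma^2/(\beta-\mu)$ and $\tau_2=\mu+\sigma^2/\mu$, and the worst-case tail probability reduces to explicit algebraic expressions obtained from the two- and three-point distributions $\bP_2^*$ and $\bP_3^*$. Multiplying by $p$ then yields a piecewise revenue objective
\begin{align*}
\inf_{\bP\in\cP(\mu,\sigma,\beta)} \textup{REV}(p,\bP) \;=\;
\begin{cases}
\dfrac{p(\mu-p)^2}{(\mu-p)^2+\sigma^2}, & p\in(0,\tau_1],\\[1.2ex]
\dfrac{p(\mu^2+\sigma^2-\mu p)}{\beta(\beta-p)}, & p\in[\tau_1,\tau_2],\\[1.2ex]
0, & p\in[\tau_2,\beta],
\end{cases}
\end{align*}
so it suffices to maximize each piece and compare.

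On the first piece, the first-order condition simplifies, after the substitution $y=(\mu-p)/\sigma$, to the depressed cubic $y^3+3y-2\mu/\sigma=0$. Applying Cardano's formula gives the unique real root, which after back-substitution yields exactly $\pi_l^*$. On the second piece, differentiating the rational function gives a quadratic in $p$ whose only root lying below $\beta$ is $\pi_h^* = \beta-\sqrt{\beta(\beta-\mu-\sigma^2/\mu)}$. I would then verify that these candidates indeed lie in their respective intervals $(0,\tau_1]$ and $[\tau_1,\tau_2]$ (relying on the constraint $\sigma^2\le\mu(\beta-\mu)$ guaranteed by Lemma~\ref{lemma:non-empty}) and, if not, check that the relevant piece is monotone so the maximum is attained at the boundary $\tau_1$, where the two pieces match by continuity of the worst-case tail-probability.

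With the two local maxima identified, the global maximum is simply the larger of the two values. I would introduce the function $\Delta(\sigma) = \textup{REV}(\pi_l^*,\bar\bP_l) - \textup{REV}(\pi_h^*,\bar\bP_h)$ and argue it is continuous and monotone in $\sigma$ with $\Delta(0)>0$ and $\Delta(\sigma_{\max})<0$, so there is a unique crossing $\delta^*\in(0,\sigma_{\max})$. Below $\delta^*$ the first regime wins and the optimum is $\pi_l^*$; above $\delta^*$ the second regime wins and the optimum is $\pi_h^*$.

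The main obstacle is the monotonicity claim for $\Delta(\sigma)$, since both $\pi_l^*$ and $\pi_h^*$ depend implicitly on $\sigma$ through the cubic and quadratic roots, and the pieces change character at the moving breakpoint $\tau_1(\sigma)$. A clean way forward is to apply the envelope theorem to each piece, so that only the explicit $\sigma$-dependence of the objective matters when differentiating the two optimal values; this reduces the monotonicity of $\Delta$ to checking signs of explicit expressions rather than tracking the derivatives of the implicit roots. The existence and uniqueness of $\delta^*$ then follows from the intermediate value theorem.
\end{customproofsketch}
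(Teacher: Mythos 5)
The paper does not actually prove Lemma~\ref{lem:rev_price}: it is imported verbatim from \cite{azar2012optimal}, \cite{suzdaltsev2020distributionally} and \cite{van2024robust}, so there is no in-paper proof to match. Judged on its own, your reconstruction is essentially sound and follows the same template the paper uses for the competitive-ratio analogue (Theorem~\ref{th:optimal_price_variance} and Appendix~F): specialize Proposition~\ref{prop:inf_tp} to $\varphi(x)=x^2$, maximize each piece, and separate the two regimes by an envelope-theorem monotonicity argument. Your piecewise objective is correct (for $p\in(0,\tau_1]$ one gets $p(\mu-p)^2/((\mu-p)^2+\sigma^2)$ from $v_\alpha(p)$ with $\alpha(p)=\mu+\sigma^2/(\mu-p)$, and for $p\in[\tau_1,\tau_2]$ one gets $p(\mu^2+\sigma^2-\mu p)/(\beta(\beta-p))$ from $w_\beta(p)$), the substitution $y=(\mu-p)/\sigma$ does yield $y^3+3y-2\mu/\sigma=0$ whose Cardano root gives $\pi^*_l$, and the stationarity condition on the second piece is $p^2-2\beta p+\tau_2\beta=0$ with relevant root $\pi^*_h=\beta-\sqrt{\beta(\beta-\tau_2)}$. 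The envelope-theorem signs also check out: the first value function is decreasing and the second increasing in $\sigma^2$, which is exactly how the paper pins down $\sigma^*$ in the CR case.

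The one genuine gap is your treatment of the breakpoint $\tau_1$. The lemma asserts the optimizer is \emph{always} one of $\pi^*_l$ or $\pi^*_h$, so you must exclude the configuration $\pi^*_h<\tau_1<\pi^*_l$, in which both pieces would be maximized at the kink $\tau_1$ and the stated dichotomy would fail; saying that ``the maximum is attained at the boundary $\tau_1$, where the two pieces match'' concedes rather than resolves this. The exclusion is true but needs an argument, analogous to the contradiction step in Lemma~\ref{lemma:price1}. Concretely (normalizing $\mu=1$, writing $b=\beta-1$ and $w=\sigma^2/b=1-\tau_1$), the condition $\pi^*_l>\tau_1$ is equivalent to $w^2/b+3w>2$, while $\pi^*_h<\tau_1$ is equivalent to $w^2<b\bigl(1-(3+b)w\bigr)$; combining the two forces $2-3w<1-(3+b)w$, i.e.\ $1<-bw$, a contradiction. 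This also repairs a second, related loose end: your threshold $\Delta(\sigma)$ compares the \emph{unconstrained} maxima $h_1(\pi^*_l)$ and $h_2(\pi^*_h)$, and without the exclusion above one could in principle have $\Delta>0$ while $\pi^*_l$ is infeasible (lying above $\tau_1$), so that the sign of $\Delta$ would not identify the true argmax. With the exclusion in hand, at most one candidate is ever infeasible and the infeasible regime automatically loses, so your intermediate-value definition of $\delta^*$ is consistent.
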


Using these explicit pricing formulas, let us compare the max-min CR prices with their counterparts for expected revenue. \cite{chen2022distribution} makes this comparison for the case $\beta = \infty$, concluding that expected revenue gives rise to lower prices than the competitive ratio (i.e.,~$\pi^*_l < p^*_l$). However, the case $\beta < \infty$ allows for high-price strategies that were previously ruled out. Our next result demonstrates that this ordering is no longer true in general when $\beta < \infty$.
\begin{theorem}\label{th:2prices}
The optimal robust low prices are ordered as
    $\pi^*_l < p^*_l$, when $\sigma \leq \min\{\delta^*,\sigma^*\}$ and the optimal robust high prices are ordered as $\pi^*_h > p^*_h$, when $\sigma \geq \max\{\delta^*,\sigma^*\}$.
\end{theorem}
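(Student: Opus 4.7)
The plan is to treat the low-price and high-price comparisons separately, each by reducing the two optimality conditions to simple algebraic comparisons.

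For the low-price comparison, I would first substitute $\varphi(x) = x^2$ into the tight bounds from Propositions~\ref{prop:inf_tp} and~\ref{prop:sup_ce} to obtain explicit expressions on the regime $(0,\tau_1]$. For the expected-revenue objective, the worst-case revenue reduces to $p\cdot v_\alpha(p) = p(\mu-p)^2/[(\mu-p)^2+\sigma^2]$; setting its derivative to zero and cancelling the factor $\mu-p$ yields the cubic
\begin{equation*}
h_1(u) := u^3 + 3\sigma^2 u - 2\mu\sigma^2 = 0, \qquad u := \mu - \pi^*_l.
\end{equation*}
For the CR objective on the same regime, the worst-case CR is the minimum of a decreasing function and a function that increases up to a local maximum, so its maximizer occurs where the two branches meet; equating them gives
\begin{equation*}
h_2(u) := u^3 + 2\sigma^2 u - \mu\sigma^2 = 0, \qquad u := \mu - p^*_l.
\end{equation*}
Applying Cardano recovers the closed forms stated in Lemma~\ref{lem:rev_price} and Theorem~\ref{th:optimal_price_variance}, confirming that these are indeed the correct FOCs.

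The key observation is then purely algebraic: both $h_1$ and $h_2$ are strictly increasing in $u$, so each has a unique real root, denoted $u^\pi$ and $u^p$. Since $h_1(u) - h_2(u) = \sigma^2(u-\mu)$ and $p^*_l > 0$ implies $u^p < \mu$, we get $h_1(u^p) = \sigma^2(u^p - \mu) < 0 = h_1(u^\pi)$. Monotonicity of $h_1$ forces $u^\pi > u^p$, so $\pi^*_l = \mu - u^\pi < \mu - u^p = p^*_l$, as required. The hypothesis $\sigma \leq \min\{\delta^*,\sigma^*\}$ guarantees that both optima genuinely lie in the low-price regime, so that the FOCs derived above are the operative ones.

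For the high-price comparison, I would work directly with the closed forms. Using the factorization $(3\beta-\tau_2)^2 - 4\beta^2 = (\beta-\tau_2)(5\beta-\tau_2)$, one can rewrite $p^*_{h1} = \tfrac{1}{2}(\beta+\tau_2 - \sqrt{(\beta-\tau_2)(5\beta-\tau_2)})$. Then
\begin{equation*}
\pi^*_h - p^*_{h1} = \tfrac{1}{2}\sqrt{\beta-\tau_2}\,\bigl(\sqrt{\beta-\tau_2} + \sqrt{5\beta-\tau_2} - 2\sqrt{\beta}\bigr),
\end{equation*}
and squaring the claim $\sqrt{\beta-\tau_2} + \sqrt{5\beta-\tau_2} > 2\sqrt{\beta}$ reduces (after cancelling $6\beta-2\tau_2$ against $4\beta$) to $2\sqrt{(\beta-\tau_2)(5\beta-\tau_2)} > 2(\tau_2 - \beta)$, which holds trivially since the right-hand side is nonpositive and the left-hand side is strictly positive for $\beta > \tau_2$. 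Similarly, $\pi^*_h - p^*_{h2} > 0$ is equivalent (after squaring $\beta - \tau_2/2 > \sqrt{\beta(\beta-\tau_2)}$) to $\tau_2^2/4 > 0$. Combining these two inequalities gives $\pi^*_h > \max\{p^*_{h1},p^*_{h2}\} = p^*_h$.

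The main obstacle is not the algebra itself but the careful bookkeeping of regimes: one must verify that $\sigma \leq \min\{\delta^*,\sigma^*\}$ places both prices in the low-price regime (where both FOC cubics are valid and represent interior maxima) and, conversely, that $\sigma \geq \max\{\delta^*,\sigma^*\}$ places both in the high-price regime with $\tau_2 < \beta$ (so the square roots above are real and positive). These are immediate from the definitions of $\delta^*$ and $\sigma^*$ and from the fact that $\beta > \tau_2$ is needed for $\cP(\mu,\sigma,\beta)$ to be non-trivial by Lemma~\ref{lemma:non-empty}.
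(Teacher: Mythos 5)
Your argument is correct, and it splits naturally into a part that mirrors the paper and a part that is genuinely different. For the high prices your proof is essentially the paper's: the paper also reduces $\pi^*_h>p^*_{h1}$ to $(3\beta-\tau_2)^2-4\beta^2\geq 4\beta^2-4\beta\tau_2$, i.e.\ $(\beta-\tau_2)^2\geq 0$, and $\pi^*_h>p^*_{h2}$ to $\tau_2^2>0$; your factorization $(3\beta-\tau_2)^2-4\beta^2=(\beta-\tau_2)(5\beta-\tau_2)$ is a cosmetic repackaging of the same squaring argument. For the low prices, however, the paper does not compute anything: it observes that for $\sigma\leq\min\{\delta^*,\sigma^*\}$ both prices coincide with their $\beta=\infty$ counterparts and then simply cites \cite{chen2022distribution} for the ordering $\pi^*_l<p^*_l$. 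Your route is self-contained: you derive the two first-order conditions as the cubics $h_1(u)=u^3+3\sigma^2u-2\mu\sigma^2$ and $h_2(u)=u^3+2\sigma^2u-\mu\sigma^2$ in $u=\mu-p$ (both check out against the worst-case revenue $p(\mu-p)^2/((\mu-p)^2+\sigma^2)$ and against the intersection $f_{1a}=f_{1b}$, which one can verify lies before the peak of $f_{1b}$, so it is indeed the maximizer), and then exploit strict monotonicity of both cubics together with $h_1-h_2=\sigma^2(u-\mu)<0$ at the root of $h_2$. This buys an elementary, citation-free proof of the low-price ordering and makes transparent exactly where the gap $\pi^*_l<p^*_l$ comes from; the cost is the extra verification that the intersection of the two CR branches is the operative optimality condition, which the paper outsources to \cite{giannakopoulos2019robust}. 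Two small remarks: strictness of $\pi^*_l<p^*_l$ requires $\sigma>0$ (at $\sigma=0$ both cubics degenerate to $u^3$ and both prices equal $\mu$), an edge case the paper's statement shares; and your Cardano computation for $h_2$ in fact shows that the displayed formula for $p^*_l$ in Theorem~\ref{th:optimal_price_variance} should read $\bigl(\tfrac{\mu}{2\sigma}\bigr)^2$ rather than $\tfrac{\mu}{2\sigma}$ under the square root, a typo worth flagging.
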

\begin{proof}
    One can observe that $p^*_l = p^*_{(\mu,\sigma)}$ when $\sigma < \sigma^*$ and $\pi^*_l = \pi^*_{(\mu,\sigma)}$ when $\sigma < \delta^*$. Hence, from \cite{chen2022distribution} we conclude that $\pi^*_l < p^*_l$. Next, we show that $\pi_h^* > p^*_h$. 
    Case 1: Assume $p^*_h = p^*_{h1}$. Then the ordering is true if, upon some rewriting $\tau_2 - \sqrt{(3\beta-\tau_2)^2-4\beta^2} < \beta - \sqrt{4\beta^2-4\beta\tau_2}$. Now clearly $\tau_2 < \beta$, so showing that $(3\beta-\tau_2)^2-4\beta^2 \geq 4\beta^2-4\beta\tau_2$ would suffice. This is equivalent to $(\beta-\tau_2)^2 \geq 0$, which is always true.
    Case 2: Assume $p^*_h = p^*_{h2}$. Then the ordering is true, because $\frac{1}{2}\tau_2 < \beta - \sqrt{\beta(\beta-\tau_2)} \iff \frac{1}{2}\tau_2^2 > 0$, which is always true. This proves the assertion.
\end{proof}

Figure~\ref{fig:2prices} illustrates how optimal prices differ under the expected revenue and competitive ratio objectives and, specifically, how the competitive ratio gives rise to more moderate pricing strategies. While both objectives involve the same underlying trade-off between price and conversion, they differ in how they prioritize these competing factors. This difference does not stem from Nature’s behavior, as by Theorem~\ref{thm22}, both objectives induce the same worst-case scenario for any fixed price. Instead, it stems from the structural differences between the two objectives. Applying Theorem~\ref{CRDT}, the worst-case competitive ratio for a fixed price can be expressed as
\begin{align*}
    \inf_{\bP\in\cP}\textup{CR}(p,\bP) =  \frac{p\inf_{\bP\in\cP}\bP(X\geq p)}{\max\{p\sup_{\bP\in\cP}\bP(X\geq p),y(p)\inf\bP(X\geq p)\}},
\end{align*}
where $y(p) = \sup_{\bP\in\cP}\bE[X|X\geq p] \geq \max\{\mu,p\}$. This expression reveals the limitations of extreme pricing under the competitive ratio objective. When the seller sets a low price, thereby achieving a high conversion rate, the optimal expected revenue achieves the same high conversion rate without having to sacrifice price. Conversely, if the seller sets a high price and accepts a low conversion rate, the optimal expected revenue sets the same price while achieving a superior conversion rate that remains bounded away from zero, even as the seller's conversion rate vanishes when $p\approx \tau_2$. Hence, while the expected revenue objective allows the seller to set more extreme prices depending on which factor of the trade-off dominates, the competitive ratio penalizes such extremities and instead encourages more conservative choices, leading to more balanced pricing strategies.

\begin{figure}[h!]
    \centering
    \includegraphics[width=0.6\linewidth]{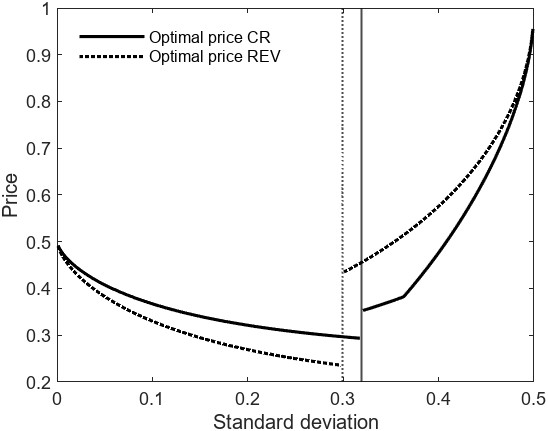}
    \caption{Proposition \ref{th:2prices} with $\mu = 0.5$ and $\beta = 1$. The vertical lines illustrate the location of $\delta^*$ and $\sigma^*$, respectively, from left to right.}
    \label{fig:2prices}
\end{figure}

\subsection{Fractional moments}\label{sec:frac}
Let $\cP(\mu,q,\beta)$ denote the specific instance of \eqref{eq:ambiguityk}, where $\varphi(x) = x^q$ with $q > 1$, and consider 
\begin{align}\label{pr_q}
\sup_{p>0}\inf_{\bP\in\cP(\mu,q,\beta)}\textup{CR}(p,\bP).
\end{align}
This setting contains the variance-setting as special case. We solve the inner minimization problem for fixed $p$ by leveraging Theorem \ref{CRDT} and then plugging in the quantities $$\sup_{\bP\in\cP(\mu,q,\beta)}\bP(X\geq p), \ \ \inf_{\bP\in\cP(\mu,q,\beta)}\bP(X\geq p), \ \ \sup_{\bP\in\cP(\mu,q,\beta)}\bE(X|X\geq p)$$ from Proposition \ref{prop:sup_tp}, \ref{prop:inf_tp}, and \ref{prop:sup_ce}. This results in
\begin{equation}\label{inner_solution}
    \inf_{\bP\in \cP(\mu,q,\beta)} \textup{CR}(p,\bP) =\begin{cases}
     \min\left\{\frac{\mu-p}{\alpha(p)-p},\frac{p}{\alpha(p)}\right\}, \quad & p\in(0,\tau_1], \\
     \min\left\{\frac{ps-\mu p^q}{\mu(\beta^q-p^q)-s(\beta-p)},\frac{p}{\beta}\right\}, \quad &  p\in[\tau_1,\tau_2], \\
    0, & p \in[\tau_2,\beta],
    \end{cases}
\end{equation}
where $\alpha(p)$ is defined as in \eqref{alpha}. The value $\tau_1$ results from $\eqref{eq:upsilon_1}$, although this does not lead to a closed-form expression, while \eqref{eq:upsilon_2} gives $\tau_2 = (\frac{s}{\mu})^{\frac{1}{q-1}}$. 
To facilitate analysis, we define the functions
\begin{equation}\label{functions_g}
    g_{1a}(p) = \frac{\mu-p}{\alpha(p)-p},\ \ g_{1b}(p) = \frac{p}{\alpha(p)},\ \ g_{2a}(p) = \frac{ps-\mu p^q}{\mu(\beta^q-p^q)-s(\beta-p)}, \ \ g_{2b}(p) = \frac{p}{\beta}.
\end{equation}
In order to solve \eqref{pr_q}, we need to understand the shapes of \eqref{functions_g}.
One can see that $g_{1a}(p)$ is decreasing in $p$, since $\alpha(p) \geq \mu$ is increasing in $p$ for $p\leq \tau_1<\mu$. Furthermore, $g_{2b}(p)$ is clearly increasing in $p$. Additionally, although not immediately clear, both $g_{1b}(p)$ and $g_{2a}(p)$ have a unique local maximum location on their respective intervals. The next lemma establishes this:
\begin{lemma}\label{lemma:ulm}
    The functions $g_{1b}(p)$ with $p \in (0,\tau_1]$ and $g_{2a}(p)$ with $p \in [\tau_1,\tau_2]$ have one local maximum location.
\end{lemma}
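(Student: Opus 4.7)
The plan is to treat $g_{2a}$ and $g_{1b}$ separately, using an elementary derivative calculation for the first, and an implicit-function reduction combined with a monotonicity-based contradiction for the second.

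For $g_{2a}(p) = N(p)/D(p)$ with $N(p) = ps - \mu p^q$ and $D(p) = \mu(\beta^q - p^q) - s(\beta - p)$, the key observation is that $N'(p) = D'(p) = s - q\mu p^{q-1}$, so the quotient rule collapses to $g_{2a}'(p) = D'(p)(D(p) - N(p))/D(p)^2$. A short calculation shows $D(p) - N(p) = \beta(\mu\beta^{q-1} - s)$, which is independent of $p$ and strictly positive, since the standing assumption $\beta > \tau_2 = (s/\mu)^{1/(q-1)}$ gives $\mu\beta^{q-1} > s$. Hence the sign of $g_{2a}'$ coincides with that of $D'(p)$, which is strictly decreasing in $p$ and vanishes only at $p^\dagger = (s/(q\mu))^{1/(q-1)}$. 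Therefore $g_{2a}$ admits exactly one local maximum location on $[\tau_1,\tau_2]$: at $p^\dagger$ when $p^\dagger \in [\tau_1,\tau_2]$, and otherwise at the endpoint that yields the larger value.

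For $g_{1b}(p) = p/\alpha(p)$ the critical condition $\alpha(p) = p\alpha'(p)$ is not explicit because $\alpha$ is defined only implicitly. The plan is to differentiate the defining equation $\alpha^q(\mu - p) + p^q(\alpha - \mu) = s(\alpha - p)$ with respect to $p$, substitute the resulting expression for $\alpha'(p)$ into the critical condition, and then use the defining equation itself to eliminate $s$. After some algebra this reduces to
\[
qp\alpha(\alpha^{q-1} - p^{q-1}) = (q-1)\mu(\alpha^q - p^q).
\]
Dividing by $q\alpha^q$ and introducing the homogeneous variable $\rho = p/\alpha \in (0,1)$ rewrites the critical condition compactly as $pf(\rho) = (q-1)\mu/q$ with $f(\rho) = (1 - \rho^{q-1})/(1 - \rho^q)$. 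A direct derivative check confirms that $f$ is strictly decreasing on $(0,1)$: $f'(\rho)$ has the sign of $q\rho - \rho^q - (q-1)$, which vanishes at $\rho = 1$ and whose derivative $q(1 - \rho^{q-1})$ is positive on $(0,1)$, so this expression increases up to $0$ and is therefore negative on $(0,1)$.

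I then conclude by contradiction. Suppose $g_{1b}$ has two distinct local maxima $p_1 < p_2$ in $(0,\tau_1]$. Continuity on $[p_1,p_2]$ and the extreme-value theorem yield an interior local minimum $\bar p \in (p_1,p_2)$, with $g_{1b}'(\bar p) = 0$ and $\bar\rho := g_{1b}(\bar p) < g_{1b}(p_1) =: \rho_1$. Since $p_1$ and $\bar p$ are both critical points, both satisfy $p_1 f(\rho_1) = \bar p\, f(\bar\rho) = (q-1)\mu/q$. The inequality $p_1 < \bar p$ then forces $f(\rho_1) > f(\bar\rho)$, and strict monotonicity of $f$ requires $\rho_1 < \bar\rho$, contradicting $\rho_1 > \bar\rho$. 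Hence $g_{1b}$ has at most one local maximum location on $(0,\tau_1]$, and since $g_{1b}$ extends continuously to $[0,\tau_1]$ with $g_{1b}(0) = 0 < \tau_1/\beta = g_{1b}(\tau_1)$, the maximum on $(0,\tau_1]$ is attained, giving exactly one. The main obstacle is the algebraic reduction to $pf(\rho) = (q-1)\mu/q$: the crucial step is to eliminate $s$ via the defining equation \emph{before} passing to the variable $\rho$, which factors the critical condition into a part linear in $p$ times a purely $\rho$-dependent term that admits clean monotonicity analysis.
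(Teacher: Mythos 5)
Your proof is correct and arrives at the same critical points as the paper, but by a genuinely different route in both halves. For $g_{2a}$ the paper differentiates directly and factors, reducing the sign of $g_{2a}'$ to the sign of $sp-q\mu p^q$; your observation that numerator and denominator have identical derivatives, so that $g_{2a}'=D'(D-N)/D^2$ with $D-N=\beta(\mu\beta^{q-1}-s)$ constant and positive under $\beta>\tau_2$, is a slicker path to the same conclusion. For $g_{1b}$ the divergence is more substantial: the paper keeps $s$ in the critical condition, writes it as $\frac{\alpha(p)^q-p^q}{\alpha(p)-p}\le\frac{qs}{\mu}$, and concludes unimodality from the fact that the divided difference of the convex function $x\mapsto x^q$ is increasing in each argument, combined with the monotonicity of $\alpha(\cdot)$. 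You instead eliminate $s$ through the defining equation of $\alpha(p)$, pass to the homogeneous variable $\rho=p/\alpha(p)$ --- which is exactly the value $g_{1b}(p)$ --- and obtain the critical equation $pf(\rho)=(q-1)\mu/q$ with $f$ strictly decreasing; the two-local-maxima contradiction then exploits that $\rho$ itself is the objective value. Your route avoids invoking the monotonicity of $\alpha(p)$ and produces a clean closed-form critical equation, whereas the paper's sign-change argument yields the slightly stronger conclusion that $g_{1b}$ is increasing and then decreasing on $(0,\tau_1]$.

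One loose end deserves a sentence: the strict inequality $g_{1b}(\bar p)<g_{1b}(p_1)$ is not automatic from the extreme-value theorem. If the minimum of $g_{1b}$ over $[p_1,p_2]$ were attained at $p_1$, the local-maximum property at $p_1$ would force $g_{1b}$ to be constant on a right-neighborhood of $p_1$, and an interior critical point with a strict drop need not exist. This degenerate case is ruled out by your own critical equation: on an interval where $g_{1b}\equiv\rho_1$ every point is critical, so $pf(\rho_1)=(q-1)\mu/q$ would have to hold for a continuum of $p$ with $f(\rho_1)>0$ fixed, which is impossible. With that remark added, the argument is complete.
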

\noindent The proof is presented in Appendix \ref{appendix_powermoments}. Now that we have derived the shape of \eqref{inner_solution}, we can solve the outer maximization problem. The next theorem characterizes the resulting optimal solution:
\begin{theorem}\label{th:optimal_price_q}
    The optimal robust price is one of four candidate prices: $$p_{(\mu,q,\beta)}^*=\arg\sup_{p>0}\inf_{\bP\in\cP(\mu,q,\beta)}\textup{CR}(p,\bP)\in \{\min\{\bar{p}_l,\hat{p}_l\},\max\{\bar{p}_h,\hat{p}_h\}\}$$
with
\begin{enumerate}[label=\textup{(\roman*)}]
\item $\bar{p}_l$: Left-most solution in $(0,\mu]$ to $\bar{p}_l = \alpha(\bar{p}_l) - \sqrt{\alpha(\bar{p}_l)(\alpha(\bar{p}_l) - \mu)}$,
\item $\hat{p}_l$: Solution to $\frac{\alpha(\hat{p}_l)^q - \hat{p}_l^q}{\alpha(\hat{p}_l) - \hat{p}_l} = \frac{qs}{\mu}$,
\item $\bar{p}_h$: Right-most solution in $(0,\tau_2]$ to $\frac{\beta^q - \bar{p}_l^q + \beta \bar{p}_l^{q-1}}{2\beta - \bar{p}_l} = \frac{s}{\mu}$,
\item $\hat{p}_h$: Defined as $\hat{p}_h = \left(\frac{s}{q\mu}\right)^{\frac{1}{q-1}}$.
\end{enumerate}
\end{theorem}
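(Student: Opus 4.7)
The plan is to solve the outer maximization of \eqref{inner_solution} interval-by-interval. The interval $[\tau_2,\beta]$ contributes $0$ and is trivial. On each of the remaining two intervals the worst-case CR is the minimum of two explicit functions whose shapes are already pinned down by Lemma~\ref{lemma:ulm} and elementary monotonicity: on $(0,\tau_1]$, $g_{1a}(p)=(\mu-p)/(\alpha(p)-p)$ is strictly decreasing (since $\alpha(\cdot)$ is increasing and $p<\mu$) while $g_{1b}(p)=p/\alpha(p)$ is unimodal with a unique interior apex; on $[\tau_1,\tau_2]$, $g_{2b}(p)=p/\beta$ is strictly increasing while $g_{2a}$ is unimodal. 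On each interval the inner minimum therefore has at most two candidate local maxima: a crossing of the two component curves, and the apex of the unimodal one.

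I claim that the argmax of $\min\{g_{1a},g_{1b}\}$ on $(0,\tau_1]$ is $\min\{\bar{p}_l,\hat{p}_l\}$ and the argmax of $\min\{g_{2a},g_{2b}\}$ on $[\tau_1,\tau_2]$ is $\max\{\bar{p}_h,\hat{p}_h\}$. The boundary comparisons anchor this: $g_{1a}(0)=\mu/\tau_2>0=g_{1b}(0)$ and $g_{2a}(\tau_2)=0<\tau_2/\beta=g_{2b}(\tau_2)$, so on each interval the unimodal curve starts strictly below the monotone one and the minimum is initially dictated by the unimodal function. Tracing the minimum along the interval, it either reaches its apex (after which it descends) before the curves cross, in which case the apex is the argmax, or the curves cross before the apex is reached, in which case the crossing is the argmax. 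The $\min/\max$ wrapper captures both outcomes uniformly.

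Deriving the characterizations (i)--(iv) is then algebraic. Setting $g_{1a}=g_{1b}$ and cross-multiplying yields the quadratic $p^2-2\alpha(p)p+\mu\alpha(p)=0$, whose leftmost root in $(0,\mu]$ is $\alpha(p)-\sqrt{\alpha(p)(\alpha(p)-\mu)}$, giving (i). For (ii), the first-order condition for $p/\alpha(p)$ is $\alpha(p)=p\alpha'(p)$; differentiating the defining identity $\alpha^q(\mu-p)+p^q(\alpha-\mu)=s(\alpha-p)$ implicitly and substituting $\alpha'=\alpha/p$ reduces, after eliminating $s$ via the same identity, to $\mu(q-1)(\alpha^q-p^q)=q\alpha p(\alpha^{q-1}-p^{q-1})$, which is equivalent to the stated $(\alpha^q-p^q)/(\alpha-p)=qs/\mu$. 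For (iii), setting $g_{2a}=g_{2b}$ and simplifying gives $(\beta^q-p^q+\beta p^{q-1})/(2\beta-p)=s/\mu$. For (iv), a useful shortcut is that the numerator $N(p)=sp-\mu p^q$ and denominator $D(p)$ of $g_{2a}$ share the derivative $N'(p)=D'(p)=s-q\mu p^{q-1}$, so $g_{2a}'=N'(D-N)/D^2$ and the unique interior critical point is $N'(p)=0$, yielding $\hat{p}_h=(s/(q\mu))^{1/(q-1)}$.

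The main obstacle will be the geometric case analysis in the second paragraph: one must verify rigorously that the argmax is always realized at one of the four candidates and never at an interval endpoint, handling the degenerate situations where a crossing fails to exist in the interior or the apex $\hat{p}_l$ (resp.\ $\hat{p}_h$) falls outside the relevant subinterval. This requires a careful inspection of the signs of $g_{1b}-g_{1a}$ and $g_{2a}-g_{2b}$ near the endpoints against the monotonicity profiles of the four components. Once these geometric facts are locked in, the theorem follows by comparing the value of the worst-case CR at the two surviving local maxima and discarding the trivial third interval.
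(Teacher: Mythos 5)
Your proposal follows essentially the same route as the paper: split at $\tau_1$ and $\tau_2$, use the monotone-versus-unimodal shapes of the four component functions (Lemma~\ref{lemma:ulm} plus the elementary monotonicity of $g_{1a}$ and $g_{2b}$), anchor with the boundary comparisons $g_{1b}(0^+)<g_{1a}(0^+)$ and $g_{2a}(\tau_2)<g_{2b}(\tau_2)$, and conclude that each local argmax is either a crossing or an apex. Your algebraic derivations of (i)--(iv) all check out, and the observation that the numerator and denominator of $g_{2a}$ share the same derivative, so that $g_{2a}'=N'(D-N)/D^2$ with $D-N=\beta(\mu\beta^{q-1}-s)$, is a cleaner way to locate $\hat p_h$ than the paper's direct differentiation.

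The one piece you explicitly defer --- ruling out the interval endpoint $\tau_1$ as the global argmax --- is not a routine loose end but the step the paper devotes its final paragraph to, and the theorem's four-candidate list is false without it (each half-interval's local maximizer is really $\min\{\bar p_l,\hat p_l,\tau_1\}$ resp.\ $\max\{\bar p_h,\hat p_h,\tau_1\}$, so $\tau_1$ must be actively excluded). The paper closes this by contradiction: if $\tau_1$ were the global maximizer and distinct from the crossings, it would have to be the local maximizer on both sides, forcing $\bar p_h<\tau_1<\bar p_l$ and $\hat p_h<\tau_1<\hat p_l$; then $g_{1b}$ is increasing up to $\tau_1$ and $g_{2a}$ is decreasing from $\tau_1$, and since the left crossing lies beyond $\tau_1$ one has $g_{1a}(\tau_1)>g_{1b}(\tau_1)$. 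The continuity matching $g_{1a}(\tau_1)=g_{2a}(\tau_1)$ and $g_{1b}(\tau_1)=g_{2b}(\tau_1)$ then gives $g_{2a}(\tau_1)>g_{2b}(\tau_1)$, which forces the right-most crossing $\bar p_h$ to lie at or beyond $\tau_1$, a contradiction. You should incorporate this (or an equivalent) argument; without it the proof establishes only a six-candidate version of the statement.
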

\begin{proof}
    Consider $p \in (0,\tau_1]$. Since $g_{1a}(p)$ is decreasing in $p$, $g_{1b}(p)$ has one maximum location, and $g_{1b}(0^+)=0 < g_{1a}(0^+)$, we know that either the left-most intersection point between $g_{1a}(p)$ and $g_{1b}(p)$, which is $\bar{p}_l$, the maximum location of $g_{1b}(p)$, which is $\hat{p}_l$, or the boundary solution $\tau_1$ is the optimal price. We now consider three cases: \text{Case 1:} $\min\{\bar{p}_l,\hat{p}_l,\tau_1\} = \bar{p}_l$. Then $\min\{g_{1a}(p),g_{1b}(p)\} = g_{1b}(p)$ when $p \in (0,\bar{p}_l]$ and $\min\{g_{1a}(p),g_{1b}(p)\}$ is decreasing in $p$ when $p \in [\bar{p}_l,\tau_1]$. Furthermore, $g_{1b}(p)$ is increasing in $p$ on the interval $(0,\bar{p}_l]$. Hence, the maximum location is attained at $\bar{p}_l$. \text{Case 2:} $\min\{\bar{p}_l,\hat{p}_l,\tau_1\} = \hat{p}_l$. Then $\min\{g_{1a}(p),g_{1b}(p)\} = g_{1b}(p)$ when $p \in (0,\bar{p}_l]$ and $\min\{g_{1a}(p),g_{1b}(p)\}$ is decreasing in $p$ when $p \in [\bar{p}_l,\tau_1]$. Hence, the maximum location is attained at $\hat{p}_l$. \text{Case 3:} $\min\{\hat{p}_l,\bar{p}_l,\tau_1\} = \tau_1$. Clearly, $\min\{g_{1a}(p),g_{1b}(p)\}=g_{1b}(p)$ with $g_{1b}(p)$ increasing in $p$. Hence, the maximum location is attained at $\tau_1$. By combining these three cases we conclude that the maximum location on the interval $(0,\tau_1]$ is attained at $\min\{\bar{p}_l,\hat{p}_l,\tau_1\}$. 
    
    Next, consider $p \in [\tau_1,\tau_2]$. We know that $g_{2a}(p)$ has one maximum location and that $g_{2b}(p)$ is increasing in $p$. Hence, we deduce that the optimal value is either attained by the right-most intersection point between $g_{2a}(p)$ and $g_{2b}(p)$, which is $\bar{p}_h$, the maximum location of $g_{2a}(p)$, which is $\hat{p}_h$, or the boundary solution $\tau_1$. We now again consider three cases: \text{Case 1:} $\max\{\bar{p}_h,\hat{p}_h,\tau_1\} = \bar{p}_h$. If $g_{2a}(\tau_1) \geq g_{2b}(\tau_1)$, then $\min\{g_{2a}(p),g_{2b}(p)\}$ is increasing in $p$ when $p \in [\tau_1,\bar{p}_h]$ and $\min\{g_{2a}(p),g_{2b}(p)\}$ is decreasing in $p$ when $p \in [\bar{p}_h,\tau_2]$. Hence, the maximum location is attained at $\bar{p}_h$. Similarly, if $g_{2a}(\tau_1) < g_{2b}(\tau_1)$, then $\min\{g_{2a}(p),g_{2b}(p)\}$ is increasing when $p \in [\tau_1,\bar{p}_h]$, and $\min\{g_{2a}(p),g_{2b}(p)\}$ is decreasing in $p$ when $[\bar{p}_h,\tau_2]$. Hence, the maximum location is attained at $\bar{p}_h$. \text{Case 2:} $\max\{\bar{p}_h,\hat{p}_h,\tau_1\} = \hat{p}_h$. If $g_{2a}(\tau_1) \geq g_{2b}(\tau_1)$, then $\min\{g_{2a}(p),g_{2b}(p)\}$ is increasing in $p$ when $p \in [\tau_1,\bar{p}_h]$ and $\min\{g_{2a}(p),g_{2b}(p)\}$ is decreasing in $p$ when $p \in [\bar{p}_h,\tau_2]$. Hence, the maximum location is attained at $\hat{p}_h$. Similarly, if $g_{2a}(\tau_1) < g_{2b}(\tau_1)$, then $\min\{g_{2a}(p),g_{2b}(p)\}$ is increasing in $p$ when $p \in [\tau_1,\tau_2]$. Hence, the maximum location is attained at $\hat{p}_h$. \text{Case 3:} $\max\{\bar{p}_h,\hat{p}_h,\tau_1\} = \tau_1$. Clearly, $g_{2a}(p)$ is now decreasing on the entire interval $[\tau_1,\tau_2]$. Hence, it must be that $g_{2a}(\tau_1) < g_{2b}(\tau_1)$, as otherwise the right-most intersection would occur after $\tau_1$. But then $\min\{g_{2a}(p),g_{2b}(p)\} = g_{2a}(p)$ and since this is decreasing in $p$ on $[\tau_1,\tau_2]$, the maximum location is attained at $\tau_1$. By combining these three cases we conclude that the maximum location on the interval $[\tau_1,\tau_2]$ is attained at $\max\{\bar{p}_h,\hat{p}_h,\tau_1\}$. 
    
    However, $\tau_1$ is never the global maximum location, unless $\tau_1 = \bar{p}_l = \bar{p}_h$. Assume $\tau_1$ is the global maximum location which differs from $\bar{p}_l$ and $\bar{p}_h$. Now notice that $\tau_1$ is part of both the intervals $(0,\tau_1]$ and $[\tau_1,\tau_2]$, meaning it can only be the global maximum location if it is the local maximum location in both intervals. Hence, it holds that $\bar{p}_h < \tau_1 < \bar{p}_l$ and $\hat{p}_h < \tau_1 < \hat{p}_l$. Now due to $\tau_1 < \hat{p}_l$, we know that $g_{1b}(p)$ is increasing in $p$ on $(0,\tau_1]$ and since $\hat{p}_h < \tau_1$, we know that $g_{2a}(p)$ is decreasing in $p$ on $[\tau_1,\tau_2]$. Furthermore, $g_{1a}(0^+) = \frac{\mu}{\tau_2}>0=g_{1b}(0)$, which implies that $g_{1a}(\tau_1) > g_{1b}(\tau_1)$. However, due to $g_{1a}(\tau_1) = g_{2a}(\tau_1)$ and $g_{1b}(\tau_1) = g_{2b}(\tau_1)$, it must be that $\bar{p}_h \geq \tau_1$. This is a contradiction.
\end{proof}

\begin{figure}[h!]
    \centering
    \begin{subfigure}[b]{0.48\linewidth}
        \centering
        \includegraphics[width=\linewidth]{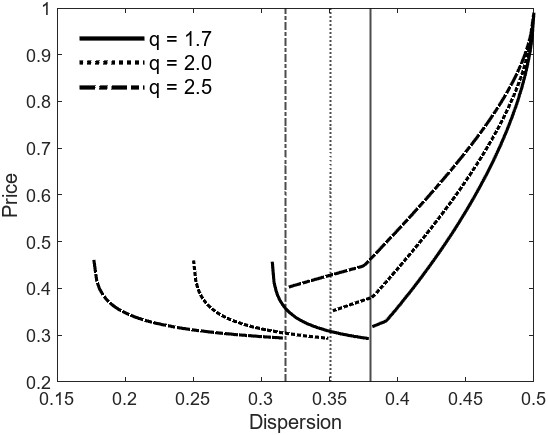}
        \caption{Optimal price as a function of dispersion. Vertical lines indicate discontinuous price jumps.}
    \end{subfigure}
    \hfill
    \begin{subfigure}[b]{0.48\linewidth}
        \centering
        \includegraphics[width=\linewidth]{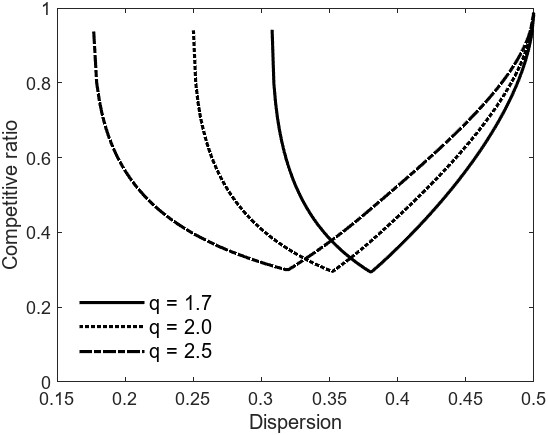}
        \caption{Optimal competitive ratio as a function of dispersion.}
        \label{fig:3prices_var}
    \end{subfigure}
    \caption{Optimal price and competitive ratio for various $q$ with $\mu = 0.5$ and $\beta = 1$, based on Theorem~\ref{th:optimal_price_q}.}
    \label{fig:q-moment}
\end{figure}

Figure~\ref{fig:q-moment} shows how dispersion knowledge impacts pricing. As $q$ increases, the pricing strategy extends over a larger range of dispersion values. The discontinuous jump in price also seems to become larger.

\section{Conclusions}\label{sec:conclusions}
We have solved the deterministic monopoly pricing problem with competitive ratio objective and limited information. Its fractional shape, with a numerator and denominator both responding to the same scenario, makes the critical ratio stand out compared with alternative criteria such as expected revenue or regret.
Our approach utilizes a max-min strategy, where the seller selects the optimal price while anticipating the worst-case scenario that complies with the available information.

In contrast to Bayesian pricing, which assumes a known prior for the valuation distribution, our max-min approach allows for ambiguity. We assume the prior belongs to a set of distributions characterized by the same mean, dispersion, and maximal value, though the set may contain infinitely many priors. These three characteristics---mean, dispersion, and maximal value---constitute both the seller’s knowledge and the constraints Nature must respect when determining the worst-case scenario. Identifying this worst-case scenario posed the primary mathematical challenge tackled in this paper.

We characterized the worst-case scenario by introducing a novel proof technique. Rather than solving a semi-infinite fractional linear program, we leveraged the intricate relationship between the competitive ratio and three fundamental quantities related to tail bounds and conditional expectations of the valuation distribution. By establishing tight bounds for these quantities, we derived a tight bound for the competitive ratio in Theorem~\ref{CRDT}. 
A key property is that for the ambiguity sets considered in this paper, tight bounds are typically attained by (mixtures of) two- and three-point distributions.
Another key concept, subtly embedded in the proof, is the best-case revenue function, described in Lemma~\ref{lemma:g(t)}. This function must be non-increasing up to a certain value for the proof of Theorem~\ref{CRDT} to work.

Solving the minimization problem yielded the worst-case competitive ratio, which was then used to determine the max-min optimal prices. This led to several new insights and guidelines. We illustrated how optimal prices respond to changes in dispersion, and particularly how knowledge of lower moments translates into more extreme pricing strategies.
We also found that introducing a cap on valuations, alongside mean and dispersion information, mitigates the risk of the competitive ratio approaching zero under high dispersion. Without such a cap, the seller is forced into a low-pricing strategy, as valuations can grow arbitrarily large. By contrast, knowing that valuations are limited by a maximum value enables the seller to explore more advantageous high-pricing strategies that prevent the competitive ratio from approaching zero.

\begin{small}
\bibliographystyle{apalike}
\bibliography{bibbook}
\end{small}

\newpage
\appendix
\section{Proof of Lemma \ref{lemma:non-empty}}\label{ap:proof_lne}
First assume that $\mu \leq \tau_2 \leq \beta$. Then by construction the two-point distribution supported on $\{0,\tau_2\}$ is contained in $\cPpar$, and, hence, this set is non-empty. Next, assume that $\cPpar$ is non-empty. Consider the semi-infinite linear program, see, e.g., \cite{popescu2005semidefinite}, 

\begin{align}\label{eq:three_lp}
\begin{array}{ll}
    \max \; \; & 0 \\
    \text{subject to} \; \; & \E_{\P}(\1_{[0,\beta]}(X)) = 1,\; \E_{\P}(X) = \mu,\; \E_{\P}(\varphi(X)) = s
\end{array}
\end{align}
Because of the non-emptiness of $\cPpar$, it is known that the above program is feasible and, hence, e.g., \cite[Theorem 1]{rogosinski1958moments} or \cite[Lemma 3.1]{shapiro2001duality} yields that there must exist an optimal three-point distribution for \eqref{eq:three_lp}. That is, there exists a three-point distribution $\P_{xyz} \in \cPpar$ supported on points $\{x,y,z\}$ with $0 \leq x \leq y \leq z \leq \beta$ and $x \leq \mu \leq z$. 

We will next argue that there must then also exist a two-point distribution in $\cPpar$. We can increase both $x$ and $y$ by shifting probability mass from $y$ to $x$ in the fashion of the two-point distribution characterization given earlier. Throughout this process, the point $z$ and its mass $v_z$ are kept fixed. To be precise, the probability masses $v_x$ and $v_y$ of the points $x$ and $y$ satisfy the system, 
\begin{align}
    \left\{
\begin{array}{rrrrl}
     v_x & + & v_y &= & 1 - v_z \\
      x v_x  & + & y v_y  &= & \mu - z v_z  \\
     \varphi(x) v_x & + & \varphi(y)v_y &  = & s - \varphi(z) v_z\\
     v_x, &  & v_y & \geq & 0 \\
\end{array}\right. .
\label{eq:equations_threepoint}
\end{align}
We emphasize that $v_z$ and $z$ are considered fixed. We can express $v_x, v_y$ and $y$ as functions of $x$, in a similar spirit as the characterization of two-point distributions given by \eqref{alpha} and \eqref{eq:two_point_masses}. If we let $x$ increase then $y$ increases as well, and the probability masses change in such a way that the overall distribution remains in $\cPpar$. We keep increasing $x$ until $y$ becomes equal to $z$, in which case we have arrived at a two-point distribution; let us call it $\P_{xz}$. {Since the two-point distribution on $\{0,\tau_2\}$ is the two-point distribution whose right support is smallest among all two-point distributions in $\cP(\mu,s,\beta,\varphi)$}, it must be the case that $\mu \leq \tau_2 \leq z$. But since $z$ has been kept fixed throughout this procedure, we also know that $\tau_2 \leq z \leq \beta$ still holds, and therefore $\tau_2 \leq \beta$.

\section{Proof of Lemma \ref{lem:nn3pd}}\label{ap:proof_nn3pd}
Due to strict convexity of $\varphi$, the common denominator $\beta(\varphi(0) - \varphi(p)) + p(\varphi(\beta) - \varphi(0))$ of the three masses is always strictly positive. Now consider the numerator of $w_0(p)$ and observe that $w_0(p) \geq 0$ if and only if
\begin{align*}
    s\geq \varphi(p)\cdot\frac{\beta-\mu}{\beta-p} + \varphi(\beta)\cdot \frac{\mu-p}{\beta-p} = f(p).
\end{align*}
This holds if $f(p)$ is decreasing in $p$ for $p \in [\tau_1,\tau_2]$, as $f(\tau_1) = s$. We will show this by proving that $f'(p) \leq 0$. Note that
$$f'(p) = \dfrac{\left({\mu}-{\beta}\right)\left(\left(p-{\beta}\right)\varphi'\left(p\right)-\varphi\left(p\right)+\varphi(\beta)\right)}{\left(p-{\beta}\right)^2},$$
and, because $(\mu - \beta) \leq 0$ and $(p-\beta)^2 \geq 0$, we have $f'(p) \leq 0$ if and only if $\left(p-{\beta}\right)\varphi'\left(p\right)-\varphi\left(p\right)+\varphi(\beta) \geq 0$. The latter is equivalent to
$$
\varphi'(p) \leq \frac{\varphi(\beta)-\varphi(p)}{\beta-p},
$$
which is true because of the (strict) convexity of $\varphi$. Next, consider the numerator of $w_p(p)$ and notice that $w_p(p)\geq 0$ if and only if
\begin{align*}
    \frac{s-\varphi(0)}{\mu} \leq \frac{\varphi(\beta)-\varphi(0)}{\beta}.
\end{align*} Again, this is the case, since $\tau_2 \leq \beta$ and convexity of $\varphi$ implies that $$\frac{s-\varphi(0)}{\mu} = \frac{\varphi(\tau_2)-\varphi(0)}{\tau_2} \leq \frac{\varphi(\beta)-\varphi(0)}{\beta}.$$ Finally, consider the numerator of $w_{\beta}(p)$ and observe that $w_{\beta}(p)\geq 0$ if and only if
\begin{align*}
    \frac{s-\varphi(0)}{\mu} \geq \frac{\varphi(p)-\varphi(0)}{p}.
\end{align*} 
Once more, this is the case, since $p \leq \tau_2$ and convexity of $\varphi$ implies that 
\begin{align*}
    \frac{\varphi(p)-\varphi(0)}{p} \leq \frac{\varphi(\tau_2)-\varphi(0)}{\tau_2} = \frac{s-\varphi(0)}{\mu}.
\end{align*}
This completes the proof.

\section{Proof of Proposition \ref{prop:inf_tp}}\label{ap:proof_pdproof2}
For convenience, we solve $\inf_{\bP \in \cP}\bP(X > p)$ and later conclude that this quantity equals $\inf_{\bP \in \cP}\bP(X \geq p)$. The dual of this infimum problem is given by
    \begin{equation}\label{dual2}
\begin{aligned}
&\sup_{\lambda_0,\lambda_1,\lambda_2 \in \R} &  &\lambda_0 + \lambda_1 \mu+\lambda_2 s,\\
&\text{s.t.} &      & F(x) = \lambda_0  +\lambda_1 x+\lambda_2 \varphi(x)  \leq \1{\{x > p\}}, \ \forall x\in[0,\beta]. 
\end{aligned}
\end{equation}
Let $p \in (0,\tau_1]$. If we assume (and verify later) that $F(x) = \1{\{x > p\}}$ for the points $\{p,\alpha(p)\}$ with $\alpha(p)$ as defined in \eqref{alpha}, and $F'(\alpha(p)) = 0$, we need $\lambda_0,\lambda_1,\lambda_2$ to satisfy 
\begin{align}
    \left\{
\begin{array}{rrrrrrr}
      \lambda_0 & + &\lambda_1 p  & + & \varphi(p) \lambda_2  &= & 0 \\
      \lambda_0 & & \lambda_1\alpha(p)  & + & \varphi(\alpha(p)) \lambda_2  &= & 1 \\
   & &\lambda_1  & + & \varphi'(\alpha(p)) \lambda_2  &= & 0 \\
\end{array}\right..
\label{eq:equations_dual2}
\end{align}
Solving this system gives
\begin{align}
\begin{array}{ll}
 \lambda_0 & = \displaystyle \frac{\varphi(p)-p\varphi'(\alpha(p))}{\varphi(p)-(p-\alpha(p))\varphi'(\alpha(p))-\varphi(\alpha(p))},\\[3.5ex] 
        \lambda_1 & = \displaystyle  \frac{\varphi'(\alpha(p))}{\varphi(p)-(p-\alpha(p))\varphi'(\alpha(p))-\varphi(\alpha(p))},  \\[3.5ex] 
        \lambda_2 & = \displaystyle  -\frac{1}{\varphi(p)-(p-\alpha(p))\varphi'(\alpha(p))-\varphi(\alpha(p))}.
\end{array}
\label{eq:dual_variables} 
\end{align}
Since $\varphi(x)$ is strictly convex, it follows that $\lambda_2 < 0$. Hence, $F(x)$ is a concave function, which means that also for all $x \in [0,\beta]$ we have $F(x) \leq \1\{x> p\}$.
If we now check the dual objective, we obtain $$\lambda_0 + \lambda_1 \mu + \lambda_2 s = \frac{\varphi(p)-p\varphi'(\alpha(p))+\varphi'(\alpha(p))\mu-s}{\varphi(p)-(p-\alpha(p))\varphi'(\alpha(p))-\varphi(\alpha(p))},$$ which does not reduce to $v_\alpha(p)$. However, due to \eqref{alpha} we can substitute $s = \varphi(\alpha(p)) \frac{\mu-p}{\alpha(p)-p}+\varphi(p)\frac{\alpha(p)-\mu}{\alpha(p)-p}$, resulting in $$\lambda_0 + \lambda_1 \mu + \lambda_2 s = v_\alpha(p).$$ Then weak duality implies the result for $p \in (0,\tau_1]$ because of the (primal) feasibility of $\bP^*_2(p)$.
Next, consider $p \in [\tau_1,\tau_2]$. 
If we for now assume that $F(x) = \1{\{x > p\}}$ for the points $\{0,p,\beta\}$, this means that $\lambda_0, \lambda_1, \lambda_2$ satisfy the system
\begin{align}
    \left\{
\begin{array}{rrrrrrr}
     \lambda_0 & &  & + & \varphi(0) \lambda_2  &= & 0 \\
      \lambda_0 & + &\lambda_1 p  & + & \varphi(p) \lambda_2  &= & 0 \\
  \lambda_0 & + &\lambda_1 \beta  & + & \varphi(\beta) \lambda_2  &= & 1 \\
\end{array}\right..
\label{eq:equations_dual}
\end{align}
Solving this system gives
\begin{align}
\begin{array}{ll}
 \lambda_0 & = \displaystyle \frac{-p\varphi(0)}{\beta(\varphi(0) - \varphi(p)) + p(\varphi(\beta) - \varphi(0))},\\[3.5ex] 
        \lambda_1 & = \displaystyle  \frac{\varphi(0)-\varphi(p)}{\beta(\varphi(0) - \varphi(p)) + p(\varphi(\beta) - \varphi(0))},  \\[3.5ex] 
        \lambda_2 & = \displaystyle  \frac{p}{\beta(\varphi(0) - \varphi(p)) + p(\varphi(\beta) - \varphi(0))}.
\end{array}
\label{eq:dual_variables} 
\end{align}
Since $\varphi(x)$ is strictly convex, it follows that $\lambda_2 > 0$. Hence, $F(x)$ is convex. Then it follows that for all $x \in [0,\beta]$, we have $F(x) \leq \1{\{x > p\}}$. Furthermore, it can be checked that $\lambda_0 + \lambda_1 \mu + \lambda_2 s = w_\beta(p)$. Then weak duality implies the result for $p \in [\tau_1,\tau_2]$ because of the (primal) feasibility of $\bP^*_3(p)$. For $p \in (\tau_2,\beta]$ consider the two-point distribution $\bP_{\tau_2}$ supported on $0$ and $\tau_2$. Since $\bP_{\tau_2}(X\geq p)=0$, it must be a solution.
A standard result, e.g., \cite[Lemma A1]{van2024robust}, ensures that if $\inf_{\bP \in \cP}\bP(X > p)$ is continuous in $p$, then $\inf_{\bP \in \cP}\bP(X > p) = \inf_{\bP \in \cP}\bP(X \geq p)$. Since $w_{\beta}(\tau_1) = v_{\alpha}(\tau_1)$ due to \eqref{eq:upsilon_1} and because $w_{\beta}(\tau_2) = 0$, we conclude that continuity holds.

\section{Proof of Proposition \ref{prop:sup_ce}}\label{ap:proof_pdproof3}
Let $p \in (0,\tau_1]$. Denote $\cP(\mu,s,\varphi) = \{\Prob :  \E_{\P}(1) = 1,\; \E_{\P}(X) = \mu, \E_{\P}(\varphi(X)) = s, \; 0 \leq X < \infty\}$. If there exists a feasible two-point distribution supported on $\{p,\alpha(p)\}$ with $\alpha(p) > p$ and $\alpha(p)$ the solution to \eqref{alpha}, then it follows from \cite[Proposition 3]{van2023generalized} and \cite[Theorem 3.1]{kleer2024distribution} that $$\sup_{\bP \in \cP(\mu,s,\varphi)}\bE(X|X\geq p) = \alpha(p).$$ Since $\bP^*_2(p^-)\in \cP$, we obtain
    $$\alpha(p) = \bE_{\bP^*_2}(X|X\geq p) \leq \sup_{\bP \in \cP}\bE(X|X\geq p)\leq \sup_{\bP \in\cP(\mu,s,\varphi)}\bE(X|X\geq p) = \alpha(p).$$
    Let $p \in (\tau_1,\beta]$. Now consider $\bP^*_3(p^-)$. Then $$\beta = \bE_{\bP^*_{3}}(X|X\geq p^-) \leq  \sup_{\bP \in \cP}\bE(X|X\geq p) \leq \beta.$$ Additionally, note that $\alpha(\tau_1)=\beta$ due to \eqref{eq:upsilon_1}. Hence, we conclude that continuity holds.
    
\section{Proof of Lemma \ref{lemma:g(t)}}\label{app:a}
From Proposition \ref{prop:sup_tp} we know that $g(p) = p$ when $p \in (0,\tau_1]$, which is clearly non-decreasing. We now consider the case when $p \in [\tau_1,\tau_2]$. From Proposition \ref{prop:sup_tp} we obtain
\begin{align}
 g(p) = p\cdot(w_p(p) + w_{\beta}(p)) = \frac{p\cdot \left[(\varphi(0) - s)(\beta-p) + \mu(\varphi(\beta) - \varphi(p)) \right]}{\beta(\varphi(0) - \varphi(p)) + p(\varphi(\beta) - \varphi(0))} = \frac{q(p)}{r(p)}.
\label{eq:sup_problem_1}
\end{align}
In order to show that \eqref{eq:sup_problem_1} is non-decreasing, we take the derivative of the expression on the right-hand side and show it is non-negative. That is, using the quotient rule, we have to show that
\begin{align}
\frac{r(p)q'(p) - q(p)r'(p)}{r(p)^2} \geq 0.
    \label{eq:quotient_rule}
\end{align}
This is the same as showing that $r(p)q'(p) - q(p)r'(p) \geq 0$, i.e, 
\begin{align}
[\beta(\varphi(0) - \varphi(p)) &+ p(\varphi(\beta) - \varphi(0))]\cdot[(\varphi(0)-s)(\beta-p) + \mu(\varphi(\beta) - \varphi(p))- p(\varphi(0)-s+ \mu\cdot \varphi'(p))] \nonumber  \\
&- [(\varphi(\beta) - \varphi(0)) - \beta \cdot \varphi'(p)][(\varphi(0)-s)p(\beta-p) + \mu \cdot p(\varphi(\beta) - \varphi(p))] \geq 0.
\end{align}

\noindent Writing out this expression results in some terms canceling out against each other, after which one can find the equivalent statement
\begin{align}
\beta(\beta-p)(\varphi(0) - \varphi(p))(\varphi(0) - s) &+ \mu \cdot \beta(\varphi(0) - \varphi(p))(\varphi(\beta) - \varphi(p)) &\nonumber \\
-\beta p(\varphi(0) - \varphi(p))(\varphi(0) - s) &- \mu \cdot \beta p(\varphi(0) - \varphi(p))\varphi'(p) &\nonumber \\
-p^2(\varphi(\beta) - \varphi(0))(\varphi(0) - s) & - \mu \cdot p^2(\varphi(\beta) - \varphi(0))\varphi'(p) &\nonumber \\
p(\beta-p)\beta\varphi'(p)(\varphi(0) - s) & + \mu \cdot \beta p(\varphi(\beta) - \varphi(p))\varphi'(p) & \geq 0.
\end{align}
We next use \eqref{eq:upsilon_2} and substitute $s - \varphi(0)$ by $\mu \cdot (\varphi(\tau_2) - \varphi(0))/\tau_2$. As a result, each of the eight terms contains the factor $\mu$, so we can divide that out (as it is strictly positive). This results in the equivalent inequality
\begin{align}
\beta(\beta-p)(\varphi(p) - \varphi(0))\frac{\varphi(\tau_2)-\varphi(0)}{\tau_2} &+ \beta((\varphi(0) - \varphi(p))(\varphi(\beta) - \varphi(p)) & \nonumber \\
-\beta p(\varphi(p) - \varphi(0))\frac{\varphi(\tau_2)-\varphi(0)}{\tau_2} &- \beta p(\varphi(0) - \varphi(p))\varphi'(p) & \nonumber \\
+p^2(\varphi(\beta) - \varphi(0))\frac{\varphi(\tau_2)-\varphi(0)}{\tau_2} & -   p^2(\varphi(\beta) - \varphi(0))\varphi'(p) & \nonumber \\
-p(\beta-p)\beta\varphi'(p)\frac{\varphi(\tau_2)-\varphi(0)}{\tau_2} & +  \beta p(\varphi(\beta) - \varphi(p))\varphi'(p) & \geq 0.
\end{align}
We now multiply and divide every term by either $p,\beta$ or $(\beta-p)$ in order to make every term contain a factor of the form $\beta p(\beta-p)$ or $\beta p^2$. That is, the above inequality is equivalent to
\begin{align}
\beta p(\beta-p)\left[\frac{\varphi(p) - \varphi(0)}{p}\right]\left[\frac{\varphi(\tau_2)-\varphi(0)}{\tau_2}\right] &-  \beta p(\beta-p)\left[\frac{\varphi(p) - \varphi(0)}{p}\right]\left[\frac{\varphi(\beta) - \varphi(p)}{\beta-p}\right] & \nonumber \\[1.5ex]
-\beta p^2\left[\frac{\varphi(p) - \varphi(0)}{p}\right]\left[\frac{\varphi(\tau_2)-\varphi(0)}{\tau_2}\right] &+ \beta p^2\left[\frac{\varphi(p) - \varphi(0)}{p}\right]\varphi'(p) & \nonumber \\[1.5ex]
+\beta p^2\left[\frac{\varphi(\beta) - \varphi(0)}{\beta}\right]\left[\frac{\varphi(\tau_2)-\varphi(0)}{\tau_2}\right] & -   \beta p^2\left[\frac{\varphi(\beta) - \varphi(0)}{\beta}\right]\varphi'(p) & \nonumber\\[1.5ex]
-\beta p(\beta-p)\varphi'(p)\left[\frac{\varphi(\tau_2)-\varphi(0)}{\tau_2}\right] & +  \beta p(\beta-p)\left[\frac{\varphi(\beta) - \varphi(p)}{\beta-p}\right]\varphi'(p) & \geq 0.
\end{align}
We now order the four terms with factor $\beta p(\beta-p)$ together, and the four terms with factor $\beta p^2$, in order to get the equivalent inequality 
\begin{align}
\beta p^2\left[ \left( \frac{\varphi(\beta) - \varphi(0)}{\beta} - \frac{\varphi(p) - \varphi(0)}{p}\right) \left(\frac{\varphi(\tau_2) - \varphi(0)}{\tau_2} - \varphi'(p) \right)\right] & + \nonumber \\[1.5ex]
\beta p(\beta-p)\left[ \left( \varphi'(p) - \frac{\varphi(p) - \varphi(0)}{p}\right) \left(\frac{\varphi(\beta) - \varphi(p)}{\beta-p} - \frac{\varphi(\tau_2) - \varphi(0)}{\tau_2} \right)\right] & \geq 0.
\label{eq:quotient_rule_equiv1}
\end{align}
Convexity yields that 
$$
\left( \frac{\varphi(\beta) - \varphi(0)}{\beta} - \frac{\varphi(p) - \varphi(0)}{p}\right),\; \left( \varphi'(p) - \frac{\varphi(p) - \varphi(0)}{p}\right), \; \left(\frac{\varphi(\beta) - \varphi(p)}{\beta-p} - \frac{\varphi(\tau_2) - \varphi(0)}{\tau_2} \right) \geq 0,
$$
However, this does not necessarily hold for $$\left(\frac{\varphi(\tau_2) - \varphi(0)}{\tau_2} - \varphi'(p) \right) = F.$$ We now distinguish between two cases.\\
\noindent \textbf{Case 1: $F \geq 0$.} In this case, all factors of \eqref{eq:quotient_rule_equiv1} are non-negative, which means \eqref{eq:quotient_rule_equiv1} is true.\\
\noindent \textbf{Case 2: $F < 0$.} We can now rearrange terms such that all factors are non-negative and obtain equivalent inequality
\begin{align}
\beta p(\beta-p) \left( \varphi'(p) - \frac{\varphi(p) - \varphi(0)}{p}\right) \left(\frac{\varphi(\beta) - \varphi(p)}{\beta-p} - \frac{\varphi(\tau_2) - \varphi(0)}{\tau_2} \right) & \geq \nonumber \\
\beta p^2 \left( \frac{\varphi(\beta) - \varphi(0)}{\beta} - \frac{\varphi(p) - \varphi(0)}{p}\right) \left(\varphi'(p) - \frac{\varphi(\tau_2) - \varphi(0)}{\tau_2} \right)&.
\label{eq:quotient_rule_equiv3}
\end{align}
\noindent Now convexity yields that
$$\left( \varphi'(p) - \frac{\varphi(p) - \varphi(0)}{p}\right) \geq \left(\varphi'(p) - \frac{\varphi(\tau_2) - \varphi(0)}{\tau_2} \right),$$
so that \eqref{eq:quotient_rule_equiv3} is true if the following inequality holds:
\begin{align}
\beta p(\beta-p) \left(\frac{\varphi(\beta) - \varphi(p)}{\beta-p} - \frac{\varphi(\tau_2) - \varphi(0)}{\tau_2} \right) & \geq \nonumber \\
\beta p^2 \left( \frac{\varphi(\beta) - \varphi(0)}{\beta} - \frac{\varphi(p) - \varphi(0)}{p}\right) &.
\label{eq:quotient_rule_equiv4}
\end{align}
After expanding the brackets on both sides and making some rearrangements, we obtain
\begin{align}
\frac{\varphi(\beta)-\varphi(0)}{\beta} \geq \frac{\varphi(\tau_2)-\varphi(0)}{\tau_2},
\label{eq:quotient_rule_equiv5}
\end{align}
which holds due to convexity.

\section{Proof of Theorem \ref{th:optimal_price_variance}}\label{appendix_variance}
We will establish the proof with two lemmas.
\begin{lemma}\label{lemma:price1}
    The optimal robust price $p^*_{(\mu,\sigma,\beta)} \in \{p^*_l,p^*_h\}$.
\end{lemma}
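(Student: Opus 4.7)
The plan is to analyze $R(p,\sigma)$ separately on the three subintervals $(0,\tau_1]$, $[\tau_1,\tau_2]$ and $(\tau_2,\beta]$ appearing in its piecewise definition, and to identify the maximizer on each. Since $R\equiv 0$ on $(\tau_2,\beta]$, it suffices to locate one local maximum on each of the first two intervals and then to select the larger of the two objective values; the reduction then yields $p^*_{(\mu,\sigma,\beta)}\in\{p_l^*,p_h^*\}$ as claimed.

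On $(0,\tau_1]$ I would write $R(\cdot,\sigma)=\min\{f_1,f_2\}$ with $f_1(p)=(\mu-p)^2/((\mu-p)^2+\sigma^2)$ and $f_2(p)=p(\mu-p)/(\mu(\mu-p)+\sigma^2)$. The function $f_1$ is strictly decreasing on $(0,\mu)$, while $f_2$ vanishes at $p=0$ and is positive on the interior, so the minimum equals $f_2$ near $0$ and must switch to $f_1$ at least once. The key step is to show that the intersection is unique and that the pointwise minimum is thereby unimodal. Setting $f_1(p)=f_2(p)$ and substituting $u=\mu-p$ reduces, after clearing denominators and cancelling a common factor of $u$, to the depressed cubic $u^3+2\sigma^2u-\mu\sigma^2=0$, whose unique real root recovered by Cardano's formula yields precisely the expression for $p_l^*$ stated in Theorem~\ref{th:optimal_price_variance}.

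On $[\tau_1,\tau_2]$ I would similarly write $R(\cdot,\sigma)=\min\{h_1,h_2\}$. Using the identities $\mu(\tau_2-\mu)=\sigma^2$ and $\mu^2+\sigma^2-p\mu=\mu(\tau_2-p)$, the first piece simplifies to $h_1(p)=p(\tau_2-p)/[(\beta-p)(p+\beta-\tau_2)]$. A short quotient-rule calculation shows that $h_1'(p)$ is proportional to $\tau_2-2p$, so $h_1$ is unimodal with peak at $p_{h2}^*=\tau_2/2$. The curve $h_2(p)=p/\beta$ is linear increasing, and setting $h_1(p)=h_2(p)$ gives the quadratic $p^2-(\beta+\tau_2)p+(2\beta\tau_2-\beta^2)=0$, whose smaller root equals $p_{h1}^*$ as written in Theorem~\ref{th:optimal_price_variance}. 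A case analysis depending on whether $h_2$ crosses $h_1$ to the right or to the left of $p_{h2}^*$ then shows that the maximizer of $\min\{h_1,h_2\}$ on this interval equals $\max\{p_{h1}^*,p_{h2}^*\}=p_h^*$.

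Taking whichever of $p_l^*,p_h^*$ yields the larger worst-case ratio then completes the proof. I expect the main obstacle to be the bookkeeping of case distinctions needed to rule out the junction point $p=\tau_1$ (and degenerate boundary configurations of the parameters) as a hidden global maximizer---an issue since $R$ is only piecewise smooth and two different minimum envelopes must be matched across $\tau_1$---and to verify, using the definitions of $\tau_1$ and $\tau_2$ in terms of $(\mu,\sigma,\beta)$, that each of the candidate prices $p_l^*,p_{h1}^*,p_{h2}^*$ actually falls inside the admissible sub-interval on which it was derived, so that no spurious extra candidates need to be introduced.
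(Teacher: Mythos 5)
Your overall strategy---split $R(\cdot,\sigma)$ over $(0,\tau_1]$, $[\tau_1,\tau_2]$, $(\tau_2,\beta]$, show each of the first two envelopes is a minimum of a unimodal piece and a monotone piece, and read off the candidates---is the same as the paper's, and your computations check out: the cubic $u^3+2\sigma^2u-\mu\sigma^2=0$ for the intersection of the two left-hand pieces, the proportionality of the derivative of the first right-hand piece to $\tau_2-2p$, and the quadratic $p^2-(\beta+\tau_2)p+(2\beta\tau_2-\beta^2)=0$ all agree with what the paper obtains (the paper outsources the analysis on $(0,\tau_1]$ to the known $\beta=\infty$ result of Giannakopoulos and Koutsoupias rather than rederiving $p_l^*$ via Cardano, but that is a cosmetic difference).

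The genuine gap is the step you explicitly defer: ruling out the junction point $\tau_1$ as the global maximizer. This is not bookkeeping---it is the one part of the argument that the candidate list $\{p_l^*,p_h^*\}$ actually hinges on. The correct local maximizers are $\min\{p_l^*,\tau_1\}$ on the left interval and $\max\{\tau_1,p_h^*\}$ on the right interval, so if $p_h^*<\tau_1<p_l^*$ the global maximum would sit at $\tau_1$, which is in neither candidate set, and the lemma would be false as stated. The paper closes this with a short contradiction that your sketch does not contain: continuity of the three key quantities at $\tau_1$ gives $f_{1a}(\tau_1)=f_{2a}(\tau_1)$ and $f_{1b}(\tau_1)=f_{2b}(\tau_1)$; but $\tau_1<p_l^*$ forces the increasing piece to still lie below the decreasing piece at $\tau_1$, i.e.\ $f_{1b}(\tau_1)<f_{1a}(\tau_1)$, while $p_h^*<\tau_1$ forces $f_{2a}$ to be decreasing on $[\tau_1,\tau_2]$ with its last crossing of the increasing line $p/\beta$ already behind it, i.e.\ $f_{2a}(\tau_1)<f_{2b}(\tau_1)$. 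These two inequalities contradict the continuity identities, so the configuration $p_h^*<\tau_1<p_l^*$ cannot occur. Without this (or an equivalent) argument, your reduction to $\{p_l^*,p_h^*\}$ does not go through. A secondary, smaller omission: on $(0,\tau_1]$ you assert unimodality of $\min\{f_1,f_2\}$ from uniqueness of the intersection, but you also need that $f_2$ is increasing up to that intersection (the paper imports this from the $\beta=\infty$ analysis); uniqueness of the crossing alone does not give unimodality of the pointwise minimum.
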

\begin{proof}
Notice that $Y = X/\mu$ satisfies the system
\begin{align}
    \left\{
\begin{array}{l}
     \E[1] = 1 \\
     \E[Y] = 1\\
     \E[Y^2] = (\mu^2+\sigma^2)/\mu^2 \\
     0 \leq Y \leq \beta/\mu
\end{array}\right..
\label{eq:equations_scaled}
\end{align}
This means that without loss of generality, we may assume that $\mu = 1$. For ease of analysis, we define the functions
$$f_{1a}(p) = \frac{(1-p)^2}{(1-p)^2+\sigma^2},\ \ f_{1b}(p) = \frac{p(1-p)}{(1-p)+\sigma^2},\ \ f_{2a}(p) = \frac{p(1+\sigma^2-p)}{(\beta-p)(\beta+p-1-\sigma^2)}, \ \ f_{2b}(p) = \frac{p}{\beta}.$$

First, consider the case that $p_{(\mu,\sigma,\beta)}^* \in (0,\tau_1]$. This analysis corresponds to the case of $\beta = \infty$, as $R(p,\sigma)$ does not depend on $\beta$, although this is performed over the smaller interval $(0,\tau_1]$. Over the larger interval $(0, \mu)$, \cite{giannakopoulos2019robust} proves that $p_l^*$ is the unique global maximum, $f_{1b}(p) < f_{1a}(p)$ while $p \in (0,p^*_l]$, and $f_{1b}(p)$ is increasing in $p$. Hence, over the interval $(0,\tau_1]$, the local maximum location is $\min\{p_l^*,\tau_1\}$. 

Second, consider the case that $p_{(\mu,\sigma,\beta)}^* \in [\tau_1,\tau_2)$.  Observe that
\begin{align}
    f'_{2a}(p) = \frac{\beta(\sigma^2-\beta+1)(2p-\sigma^2-1)}{(\beta-p)^2(p-\sigma^2+\beta-1)^2}\geq 0
\end{align}
if and only if 
\begin{align}\label{eq:a}
    (\sigma^2-\beta+1)(2p-\sigma^2-1) \geq 0.
\end{align} Since $\beta > \tau_2 = 1 + \sigma^2 > 0$, we know \eqref{eq:a} holds if and only if $\tau_2 - 2p \geq 0$.
In case $\tau_1 < \tau_2/2$, this then proves that $f'_{2a}(p)$ has at most one root (located at $\tau_2/2$), is non-negative on the interval $[\tau_1,\tau_2/2]$, and is non-positive in $p$ on the interval $[\tau_2/2,\tau_2]$. This in turn means that $f_{2a}(p)$ has a global maximum location at $\tau_2/2$, is increasing in $p$ on $[\tau_1,\tau_2/2]$, and decreasing in $p$ on the interval $[\tau_2/2,\tau_2]$. In case $\tau_1 \geq \tau_2/2$, then $f_{2a}(p)$ is non-positive in $p$ and $p^*_h \neq \tau_2/2$.

Furthermore, it is immediately clear that $f_{2b}(p)$ is strictly increasing in $p$. We will now consider the intersection points of $f_{2a}(p)$ and $f_{2b}(p)$. Notice that $$f_{2a}(p) - f_{2b}(p) = \frac{\beta p(1+\sigma^2-p)-p(\beta-p)(p-1-\sigma^2+\beta)}{\beta(\beta-p)(p-1-\sigma^2+\beta)}$$ is a rational function with as numerator a third-degree polynomial function. Hence, there will be at most three real intersection points. One intersection point is 0 and is not relevant. Additionally, the intersection point $\frac{1}{2}\left(\tau_2+\beta+\sqrt{(3\beta-\tau_2)^2-4\beta^2}\right) > \tau_2$ and is therefore also irrelevant. The intersection point $$r_h = \frac{1}{2}\left(\tau_2+\beta-\sqrt{(3\beta-\tau_2)^2-4\beta^2}\right)$$ can lie in the interval $[\tau_1,\tau_2)$ and is therefore relevant. Also, one can show that $r_h < \tau_2$ if and only if $\tau_2 < \beta$. Hence, we can write the local maximum location over the interval $[\tau_1,\tau_2)$ as $\max\{\tau_1,p_{h}^*\}$.

Now for $\tau_1$ to be the global maximum that differs from $p^*_{l}$ and $p^*_{h}$, the following ordering has to hold:
\begin{align*}
    p^*_{h} < \tau_1 < p^*_{l}.
\end{align*}
Now assume $p^*_{h} < \tau_1 < p^*_{l}$. We then know that that $f_{1a}(\tau_1) > f_{1b}(\tau_1)$, as the intersection of $f_{1a}$ and $f_{1b}$ lies to the right of $\tau_1$. Additionally, we can infer that $f_{2a}(p)$ is decreasing in $p$ over the interval $[\tau_1,\tau_2]$, as $\tau_1 > p^*_h \geq \tau_2/2$. Now since $f_{2b}(p)$ is increasing in $p$, we must have that $f_{2a}(\tau_1)<f_{2b}(\tau_1)$, as otherwise $\tau_1 \leq r_h \leq p^*_h$. However, observe that $f_{1a}(\tau_1) = f_{2a}(\tau_1)$ and $f_{1b}(\tau_1) = f_{2b}(\tau_1)$, which is a contradiction.
\end{proof}

The next lemma shows that $p^* = p^*_l$ when the variance is low and $p^* = p^*_h$ when the variance is high.

\begin{lemma}
    Consider $\sigma^*$ from \textup{Theorem \ref{th:optimal_price_variance}}. When $\sigma \leq \sigma^*$, then $p^*_{(\mu,\sigma,\beta)} = p^*_l$ and when $\sigma \geq \sigma^*$, then $p^*_{(\mu,\sigma,\beta)} = p^*_h$.
\end{lemma}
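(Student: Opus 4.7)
The previous lemma reduces the task to deciding, for each $\sigma$, which of $R(p^*_l,\sigma)$ and $R(p^*_h,\sigma)$ is larger. My strategy is to show that the map $\sigma \mapsto R(p^*_l,\sigma)$ is strictly decreasing while $\sigma \mapsto R(p^*_h,\sigma)$ is strictly increasing on the admissible range of $\sigma$. Then $\sigma \mapsto R(p^*_l,\sigma) - R(p^*_h,\sigma)$ is continuous and strictly monotone, so it has a unique zero; by the implicit definition of $\sigma^*$ in Theorem~\ref{th:optimal_price_variance}, that zero is $\sigma^*$, and the threshold characterization follows immediately.

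For the first monotonicity, I will use the fact that $p^*_l$ arises as an interior intersection $f_{1a}(p^*_l) = f_{1b}(p^*_l)$, which after simplification yields the implicit relation $(\mu - p^*_l)^3 = (2p^*_l - \mu)\sigma^2$ together with the value $R(p^*_l,\sigma) = (\mu - p^*_l)^2/((\mu - p^*_l)^2 + \sigma^2)$. Writing $y = \mu - p^*_l$ so that $y^3 = (\mu - 2y)\sigma^2$, implicit differentiation gives $y'(3y^2 + 2\sigma^2) = 2\sigma(\mu - 2y)$. Combining this with the identity $3y^3 + 6y\sigma^2 = 3\mu\sigma^2$ (obtained by substituting the implicit relation) and plugging into the expression for $dR(p^*_l,\sigma)/d\sigma$ yields a strictly negative derivative after a short calculation.

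For the second monotonicity, I will split into the subcases $p^*_h = p^*_{h1}$ and $p^*_h = p^*_{h2}$. In the first subcase, $p^*_{h1}$ is the intersection of $f_{2a}$ with $f_{2b}$, so $R(p^*_{h1},\sigma) = p^*_{h1}/\beta$; differentiating the closed form $p^*_{h1} = \tfrac{1}{2}(\beta + \tau_2 - \sqrt{(3\beta-\tau_2)^2 - 4\beta^2})$ in $\tau_2$ shows $p^*_{h1}$ is strictly increasing in $\tau_2$, hence in $\sigma$. In the second subcase, substituting $\mu^2 + \sigma^2 = \mu\tau_2$ reduces $f_{2a}(\tau_2/2)$ to the compact form $(\tau_2/(2\beta - \tau_2))^2$, which is clearly increasing in $\tau_2$. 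Continuity across the transition is immediate because both subcase formulas collapse to the common value $\tau_2/(2\beta)$ when $p^*_{h1} = p^*_{h2} = \tau_2/2$, so $\sigma \mapsto R(p^*_h,\sigma)$ is continuous and strictly increasing throughout.

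Assembling the pieces, $\sigma \mapsto R(p^*_l,\sigma) - R(p^*_h,\sigma)$ is continuous and strictly decreasing. At $\sigma = 0$ the ambiguity set collapses to the point mass at $\mu$, forcing $R(p^*_l,\sigma) \to 1$ while the high regime is degenerate; at the upper endpoint $\sigma_{\max}$ (where $\tau_2 \to \beta$) one finds $R(p^*_h,\sigma) \to 1$ while $R(p^*_l,\sigma)$ stays strictly below $1$. The intermediate value theorem then locates the unique zero at $\sigma^*$, and the sign of the difference on either side gives the claimed dichotomy. The main obstacle I anticipate is the case analysis for $p^*_h$: verifying continuity across the $p^*_{h1}/p^*_{h2}$ transition and confirming that the maximum of the two increasing branches remains increasing is the subtlest point, but both reduce to the algebraic identity $\mu^2 + \sigma^2 = \mu\tau_2$ applied to the compact formula for $f_{2a}(\tau_2/2)$.
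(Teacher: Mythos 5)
Your proposal is correct and shares the paper's overall architecture — establish that $\sigma \mapsto R(p^*_l,\sigma)$ decreases while $\sigma \mapsto R(p^*_h,\sigma)$ increases, pin down the boundary behaviour at $\sigma=0$ and $\sigma_{\max}$, and conclude via a single crossing at $\sigma^*$ — but the technical core is executed differently. The paper invokes the envelope (Danskin) theorem: it differentiates only the explicit $\sigma^2$-dependence of $f_{1a},f_{1b}$ (resp.\ $f_{2a},f_{2b}$) at the fixed optimizer, checks the signs of these partials, and never needs to know how $p^*_l$ or $p^*_h$ move with $\sigma$, nor which of the two candidate high prices is active; in particular it avoids entirely the $p^*_{h1}$/$p^*_{h2}$ case split and the continuity check at the transition that you correctly identify as the subtlest point of your route. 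Your route instead differentiates the optimizer implicitly and reduces to closed forms, which is more computational but buys explicit and rather illuminating expressions: substituting $\sigma^2 = y^3/(\mu-2y)$ with $y=\mu-p^*_l$ into $y^2/(y^2+\sigma^2)$ gives $R(p^*_l,\sigma) = (\mu-2y)/(\mu-y) = 2-\mu/p^*_l$, which is manifestly decreasing since $y$ increases in $\sigma$, and likewise $f_{2a}(\tau_2/2)=\bigl(\tau_2/(2\beta-\tau_2)\bigr)^2$ is manifestly increasing in $\tau_2$; these formulas do not appear in the paper and make the monotonicity transparent without any envelope argument. The one place where you assert rather than display a computation is the ``short calculation'' for the low branch, but it does check out via the reduction above, and your identification of $p^*_l$ as the root of $(\mu-p)^3=(2p-\mu)\sigma^2$ is consistent with the paper's Cardano-type formula. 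So both proofs are valid; the paper's is shorter and more robust to the piecewise structure of the high-price candidate, while yours is more elementary (no Danskin-type theorem) and yields closed-form values of the worst-case ratio at the two candidate prices as a by-product.
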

\begin{proof}
    As motivated in the proof of Lemma \ref{lemma:price1}, we will assume without loss of generality that $\mu=1$. Furthermore, we recall that $$f_{1a}(p) = \frac{(1-p)^2}{(1-p)^2+\sigma^2},\ \ f_{1b}(p) = \frac{p(1-p)}{(1-p)+\sigma^2},\ \ f_{2a}(p) = \frac{p(1+\sigma^2-p)}{(\beta-p)(\beta+p-1-\sigma^2)}, \ \ f_{2b}(p) = \frac{p}{\beta}.$$

    Now consider the case that $\sigma^2 \xrightarrow{} \mu(\beta-\mu)$, which is the maximal value of $\sigma^2$. Then $p^*_{(\mu,\sigma,\beta)} = p^*_h$, as $\tau_1 \xrightarrow{} 0$ implies $p^*_l \xrightarrow{} 0$. Next, consider the case that $\sigma^2 = 0$. Then $p^*_{(\mu,\sigma,\beta)} = p^*_l$, as $p^*_l = 1$ with $f_{1a}(p^*_l) = f_{1b}(p^*_l) = 1$, which is the highest value for the competitive ratio, while $p^*_h$ performs strictly worse.
    
    Since $f_{1a}(p)$ and $f_{1b}(p)$ are continuously differentiable in $p^*_l$, we can use the envelope theorem to show that
    \begin{align}
        \frac{\partial f_{1a}(p^*_l)}{\partial \sigma^2} = \frac{\partial f_{1a}(p)}{\partial\sigma^2}\biggr\rvert_{p = p^*_l} = -\frac{\left(1 - p^*_l\right)^{2}}{\left(\sigma^2 + \left(1 - p^*_l\right)^{2}\right)^{2}} \leq 0,
    \end{align}
    and
    \begin{align}
        \frac{\partial f_{1b}(p^*_l)}{\partial \sigma^2} = \frac{\partial f_{1b}(p)}{\partial\sigma^2}\biggr\rvert_{p = p^*_l} = \frac{\left(p^*_l - 1\right) p^*_l}{\left(\sigma^2 - p^*_l + 1\right)^{2}} \leq 0,
    \end{align}
    as $p^*_l \leq 1$. Hence, $\min\{f_{1a}(p^*_l),f_{1b}(p^*_l)\}$ is decreasing in $\sigma^2$. Furthermore, since $f_{2a}(p)$ and $f_{2b}(p)$ are continuously differentiable in $p^*_h$, we can use the envelope theorem to show that
    \begin{align}
        \frac{\partial f_{2a}(p^*_h)}{\partial \sigma^2} = \frac{\partial f_{2a}(p)}{\partial\sigma^2}\biggr\rvert_{p = p^*_h} = \frac{\beta p^*_h}{(\beta-p^*_h)(\sigma^2-p^*_h-\beta+1)^2} \geq 0,
    \end{align}
    and
    \begin{align}
        \frac{\partial f_{2b}(p^*_h)}{\partial \sigma^2} = \frac{\partial f_{2b}(p)}{\partial\sigma^2}\biggr\rvert_{p = p^*_h} = 0 \geq 0.
    \end{align}
    Hence, $\min\{f_{2a}(p^*_h),f_{2b}(p^*_h)\}$ is increasing in $\sigma^2$. This finishes the proof.
\end{proof}

\section{Proof of Lemma \ref{lemma:ulm}}\label{appendix_powermoments}
We start with proving that $g_{1b}(p)$ with $p \in (0,\tau_1]$ has one local maximum location. Taking the derivative yields 
\begin{align}\label{eq:z1}
    g_{1b}'(p) = \frac{\alpha(p) - p\alpha'(p)}{\alpha(p)^2}.
\end{align}
We know that $\alpha(p) \in [\mu,\beta]$, so that $\alpha(p)^2 \neq 0$. Hence, since we are only interested in the sign of $g_{1b}'(p)$, we can continue the analysis with $\alpha(p) - p\alpha'(p)$. Recall from \eqref{alpha} that $\alpha(p)$ is the unique solution to
\begin{align}\label{alpha_moment}
    \frac{sp-\mu p^q+\mu \alpha(p)^q-s \alpha(p)}{p \alpha(p)^q-\alpha(p)p^q} = 1.
\end{align}
Therefore, we can express $\alpha'(p)$ in terms of $\alpha(p)$ and $p$ by differentiating \eqref{alpha_moment} with respect to $p$ to obtain
\begin{align}
    &\frac{{\mu} \left(q\alpha(p)^{q-1} \alpha'(p) - qp^{q-1}\right) - s \left(\alpha'(p) - 1\right)}{p \alpha(p)^q - p^q \alpha(p)}\nonumber \\
    &- \frac{\left({\mu} \left(\alpha(p)^q - p^q\right) - s \left(\alpha(p) - p\right)\right) \left(pq\alpha(p)^{q-1} \alpha'(p) - p^q \alpha'(p) + \alpha(p)^q - qp^{q-1} \alpha(p)\right)}{\left(p \alpha(p)^q - p^q \alpha(p)\right)^{2}} = 0.
\end{align}

\noindent Clearly, $p \alpha(p)^q - p^q \alpha(p) \neq 0$ due to \eqref{alpha_moment} having a solution on $(0,\tau_1)$, so we can solve for $\alpha'(p)$ to obtain
\begin{align}
    \alpha'(p) = \frac{\left(qp^{q-1} \left(p - \alpha(p)\right) - p^{q} + \alpha(p)^{q}\right) \left(\alpha(p)^{q} \mu - \alpha(p)s\right)}{\left(q\alpha(p)^{q-1} \left(p - \alpha(p)\right) + \alpha(p)^{q} - p^{q}\right) \left(p^{q} \mu - ps\right)}.\label{eq:alphaprime}
\end{align}

\noindent Plugging \eqref{eq:alphaprime} into the numerator of $g'_{1b}(p)$ and simplifying results in 
\begin{align}
    \frac{\left(p \alpha(p)^{q} - p^{q} \alpha(p)\right) \left(qs \left(\alpha(p) - p\right)-\mu \left(\alpha(p)^{q} - p^{q}\right)\right)}{\left(sp-\mu p^{q}\right) \left(q \alpha(p)^{q-1} \left(\alpha(p) - p\right)- \left(\alpha(p)^{q} - p^{q}\right)\right)} = \frac{F_1F_2}{F_3F_4}.
\end{align}
\noindent Now observe that $F_1>0$, as $\alpha(p)>p$, $F_3>0$, as $\tau_2>p$, and $F_4>0$, as strict convexity yields $q\alpha(p)^{q-1} > \frac{\alpha(p)^q-p^q}{\alpha(p)-p}$. Hence, since we are only interested in the sign of $g'_{1b}(p)$, we can continue the analysis with $$qs \left(\alpha(p) - p\right)-\mu \left(\alpha(p)^{q} - p^{q}\right).$$
One can show that
\begin{align}\label{p_max_2}
qs \left(\alpha(p) - p\right)-\mu \left(\alpha(p)^{q} - p^{q}\right) \geq 0 \iff
\frac{\alpha(p)^q-p^q}{\alpha(p)-p}\leq \frac{qs}{\mu}.
\end{align}
Notice that the left-hand side of \eqref{p_max_2} is increasing in $p$ when $\alpha(p)$ is constant. Additionally, it is increasing in $\alpha(p)$ when $p$ is constant. Now since $\alpha(p)$ is increasing in $p$, it also holds that the left-hand side is increasing in $p$ overall. Simultaneously, the right-hand side is constant in $p$. This means there is one local maximum location.

We now continue with showing that $g_{2a}(p)$ with $p \in [\tau_1,\tau_2]$ has at most one local maximum location. Taking the derivative results in
\begin{align}
    g'_{2a}(p) = \frac{\left(\beta^{q} \mu - \beta s\right) \left(sp-q\mu p^{q}\right)}{p \left(\mu \left( \beta^{q}-p^q\right) - s \left( \beta-p\right)\right)^{2}}.
\end{align}
\noindent Furthermore, $\beta^q\mu - \beta s \geq 0$, as $\beta \geq \tau_2 = (\frac{s}{\mu})^{\frac{1}{q-1}}$. Hence, since we are only interested in the sign of $g_{2a}'(p)$, we can continue the analysis with $sp - q\mu p^q.$ It holds that
$$sp - q\mu p^q>0 \iff p < \left(\frac{s}{q\mu}\right)^{\frac{1}{q-1}}.$$
Since the left-hand side is increasing in $p$ and the right-hand side is constant, there is one local maximum location.

\end{document}